\newcommand{\X}{\ensuremath{X}}
\newcommand{\Sep}{\ensuremath{\operatorname{Sep}}}
\newcommand{\mc}{\mathcal}
\newcommand{\alphlist}{\begin{list}{(\alph{enumi})}{\usecounter{enumi}}}
\newcommand{\romanlist}{\begin{list}{(\roman{enumi})}{\usecounter{enumi}}}
\newcommand{\listend}{\end{list}}
\renewcommand{\:}{\colon}
\newcommand{\ssq}{\ensuremath{\subseteq}}
\newcommand{\eps}{\ensuremath{\varepsilon}}
\newcommand{\N}{\ensuremath{\mathbb{N}}} 
\newcommand{\Z}{\ensuremath{\mathbb{Z}}}
\newcommand{\C}{\ensuremath{\mathbb{C}}}
\newcommand{\diam}{\ensuremath{\mathrm{diam}}}
\newcommand{\abs}[1]{\ensuremath{\left|#1\right|}}
\theoremstyle{plain}
\newtheorem{thm}{Theorem}[section]
\newtheorem{lem}[thm]{Lemma}
\newtheorem{prop}[thm]{Proposition}
\newtheorem{cor}[thm]{Corollary}
\theoremstyle{definition}
\theoremstyle{remark}
\numberwithin{equation}{section}
\numberwithin{thm}{section}
\newcommand*{\rom}[1]{\expandafter\@slowromancap\romannumeral #1@}
\renewcommand{\phi}{\varphi}
\renewcommand{\liminf}{\varliminf}
\renewcommand{\limsup}{\varlimsup}
\newcommand{\sub}{\vartheta}
\newcommand{\Dim}{\ensuremath{\mathrm{dim}}}
\newcommand{\htop}{\ensuremath{h_{\mathrm{top}}}}
\newcommand{\udens}{\overline{D}}
\newcommand{\dsep}{\Delta}
\renewcommand{\Sep}{\ensuremath{\mathrm{Sep}}}
\newcommand{\ac}{\ensuremath{\mathrm{{ac}}}}
\newcommand{\oac}{\ensuremath{\overline{\mathrm{ac}}}}
\newcommand{\uac}{\ensuremath{\underline{\mathrm{ac}}}}
\newcommand{\sigmaX}{\left.\sigma\right|_{X}}
\newcommand{\sigmaY}{\left.\sigma\right|_{Y}}
\newcommand{\nefw}{\ensuremath{A^+}}
\title{Constant length substitutions, iterated function systems and amorphic complexity}
\author[Gabriel Fuhrmann]{Gabriel Fuhrmann}
\author[Maik Gröger]{Maik Gröger}
\address[Gabriel Fuhrmann]{Department of Mathematics, Imperial College London, 180 Queen's Gate,
London SW7 2AZ, United Kingdom }
\email{gabriel.fuhrmann@imperial.ac.uk}
\address[Maik Gröger]{Faculty of Mathematics, University of Vienna, Oskar Morgensternplatz 1,
1090 Vienna, Austria}
\email{maik.groeger@univie.ac.at}
\thanks{This project has received funding from the European Union's Horizon 2020
research and innovation program under the Marie Sklodowska-Curie grant agreement
No 750865.
Further, MG acknowledges support by the DFG grants JA 1721/2-1 and GR 4899/1-1
and would like to thank Henna Koivusalo for very helpful discussions concerning
the theory of iterated function systems on general metric spaces.}
\begin{document}
\maketitle

\begin{abstract}
	We show how geometric methods from the general theory of fractal dimensions
	and iterated function systems can be deployed to study symbolic dynamics
	in the zero entropy regime.
	More precisely, we establish a dimensional characterization of the topological
	notion of amorphic complexity.
	For subshifts with discrete spectrum associated to constant length substitutions,
	this characterization allows us to derive bounds for the amorphic complexity
	by interpreting the subshift as the attractor of an iterated function system
	in a suitable quotient space.
	As a result, we obtain the general finiteness and positivity of amorphic
	complexity in this setting and provide a closed formula in case of a
	binary alphabet.
\end{abstract}

\section{Introduction}

The relation between the dimension theory of dynamical systems and ergodic theory
is nowadays a well-established field of research.
One of the first to formally show this relation explicitly was Billingsley
\cite{Billingsley1960}.
He proved that for an expanding circle map $f \colon x\mapsto bx\!\mod 1$ 
with an $f$-invariant measure $\mu$ the following equality holds
\[
	\Dim_H(\mu)=\frac{h_{\mu}(f)}{\log b},
\]
where $\Dim_H(\mu)$ denotes the Hausdorff dimension of the measure $\mu$ and
$h_{\mu}(f)$ the measure theoretic entropy of $f$ with respect to $\mu$.
Later, Furstenberg \cite{Furstenberg1967} established a topological analogue of
this equality, namely, $\Dim_H(K)=\htop(\left.f\right|_K)/\log b$
for any $f$-invariant set $K$ in the circle  with $\Dim_H(K)$ denoting the Hausdorff
dimension of the set $K$ and $\htop(\left.f\right|_K)$ the topological entropy of
$f$ restricted to $K$.

The relevance of these equalities originates from the fact that they relate
different concepts measuring the ``size'' of an invariant object.
Roughly speaking, entropy determines the size by measuring the amount of the supported
disorder and Hausdorff dimension specifies an actually geometric size.
In the concrete setting described above, there is an intermediary needed to bring
both notions together, represented here by the term $\log b$.
It also allows for a dynamical interpretation, namely as the Lyapunov exponent
of $f$ (see, for instance, \cite{BarreiraPesin2002} for more information regarding
this notion).
In this article we will be mainly concerned with systems in the symbolic setting
and there, no intermediary is needed as we explain in the following paragraph.

Furstenberg's result actually holds in the general context of symbolic dynamical
systems.
Suppose we are given a subshift $(X,\sigma)$ of $(A^{\Z},\sigma)$
where $A$ is a finite alphabet, $\sigma$ is the left shift and $A^{\Z}$ is equipped
with the \emph{Cantor metric} $d_\beta(x,y)=\beta^{-j}$ with $\beta>1$ and
$j=\min\{\abs{k}: x_k\neq y_k\}$ for $x=(x_k)_{k\in\Z}$
and $y=(y_k)_{k\in\Z}$ in $A^{\Z}$.
Then Furstenberg's result reads as follows
(see also \cite{Simpson2015} for a generalization to 
higher-dimensional subshifts)
\begin{align}\label{eq:rel_Hausdorffdim_entropy_symbolic_shifts}
	\Dim_H(X)=\frac{\htop(\sigmaX)}{\log\beta}.
\end{align}
Moreover, for the box dimension $\Dim_B$ of $X$ (see Section \ref{sec:Besicovitch spaces and box dimension})
it is easily seen that
\begin{align}\label{eq:rel_boxdim_entropy_symbolic_shifts}
	\Dim_B(X)=\frac{\htop(\sigmaX)}{\log\beta}.
\end{align}
Summing up, for symbolic systems, the correspondence between entropy and dimension
can be established directly.
Here, the term $\log\beta$ is just a normalizing constant, depending on the
chosen metric on $X$, and we can regard (topological) entropy and (Hausdorff and
box) dimension in the symbolic setting as one and the same quantity.

For subshifts of zero entropy, equations \eqref{eq:rel_Hausdorffdim_entropy_symbolic_shifts}
and \eqref{eq:rel_boxdim_entropy_symbolic_shifts} do not yield much insight.
However, there are many natural and interesting families of symbolic systems with zero
entropy.
For instance, Sturmian and regular Toeplitz subshifts belong to this class.
Another important subclass are substitutive subshifts including those related
to the Pisot Conjecture.
For more information concerning these systems, see for example
\cite{Fogg2002}, \cite{Kurka2003} and \cite{Queffelec2010}.
Therefore, a desirable goal for symbolic dynamics in this low complexity regime is to
find an analogously intertwined behavior of (slow entropy) concepts measuring
disorder on the one side and suitable fractal dimensions on the other side.

In this article we provide this kind of relation for the topological notion of
amorphic complexity which was recently introduced in \cite{FuhrmannGroegerJaeger2016}
to study dynamical systems with zero entropy.
More precisely, we show that amorphic complexity of symbolic systems coincides
with the box dimension of an associated subset in the so-called Besicovitch space
of $(A^{\Z},\sigma)$ endowed with a canonical metric, see
Lemma \ref{prop: euiv family metrics} and Theorem \ref{thm: amorphic complexity and box dimension}.

In fact, as we will see, this equality can be established by elementary means.
At the same time, however, it embodies a key insight which puts us in a position
where we can utilize methods from fractal geometry.
For instance, this allows us to immediately deduce that amorphic complexity of mean
equicontinuous subshifts (see Section~\ref{sec:mean equicontinuity}) is bounded
from below by the topological dimension of their maximal equicontinuous factor.
We will explain this in more detail at the end of Section \ref{sec:Besicovitch spaces and box dimension},
where we also relate this observation to substitutive subshifts associated to
substitutions of Pisot type.

The connection between amorphic complexity and box dimension in the
symbolic setting is crucial for our main results.
In particular, this relation enables us to deploy the theory of iterated
function systems on general complete metric spaces to estimate the amorphic
complexity of subshifts with discrete spectrum associated to primitive constant
length substitutions, see Section \ref{sec: ac of constant length subs}.
To state some of the results, let us briefly provide a
background on amorphic complexity and substitutive subshifts.

For now we introduce amorphic complexity for symbolic systems.
The general definition can be found in Section \ref{sec:asymp sep numbers and ac},
where we also provide a short overview of some of the basic
properties of this topological invariant.
Given a subshift $(X,\sigma)$, $x,y\in X$ and $\delta>0$, we set
\begin{align}\label{eq: def_D_deltas}
	D_{\delta}(x,y)=\limsup_{n\to \infty}\frac{\#\left\{0\leq k\leq n-1 :
		d\left(\sigma^k(x),\sigma^k(y)\right)\geq\delta \right\}}{n},
\end{align}
where $d$ can be any metric generating the product topology on $A^{\Z}$.
We define the \emph{asymptotic separation numbers} as
\begin{align*}
	\Sep(\sigmaX,\delta,\nu)=\sup\left\{\# Y : Y\subseteq X\textnormal{ with }
		D_{\delta}(x,y)\geq\nu \textnormal{ for all } x \neq y \textnormal{ in } Y\right\},
\end{align*}
with $\delta>0$ and $\nu\in(0,1]$. 
The \emph{amorphic complexity} of $(X,\sigma)$ is set to be
\begin{equation*}
	\ac\big(\sigmaX\big)=\adjustlimits\sup_{\delta>0}
		\lim_{\nu\to 0}\frac{\log\Sep(\sigmaX,\delta,\nu)}{-\log \nu},
\end{equation*}
whenever the limit in $\nu$ exists (otherwise, we define the \emph{lower}
and \emph{upper amorphic complexity} $\uac$ and $\oac$, by taking the limit inferior
and limit superior, respectively).

A \emph{substitution} over a finite alphabet A
is a map $\sub:A\to\nefw$,
where $\nefw$ is the set of all non-empty finite words with letters from $A$.
Observe that by concatenation, $\sub$ can be considered a map from
$\nefw$ to $\nefw$ and from $A^{\Z}$ to $A^{\Z}$ in the natural way.
We call a substitution $\sub$ \emph{primitive} if for some $\ell>0$ and every
$a\in A$ the word $\sub^\ell(a)$ contains each of the letters of $A$.
We say a substitution $\sub$  is of \emph{constant length} if there is $\ell\in\N$
such that $\sub(a)\in A^{\ell}$ for each $a\in A$ (and we set $\abs{\sub}=\ell$).
For every constant length substitution $\sub$ there exists
a periodic point $x_0$ of $\sub$ in $A^{\Z}$, that is, there is $p\in \N$ with 
$\sub^p(x_0)=x_0$ (see \cite{Gottschalk1963}).
Finally, if $\sub$ is primitive, we denote by $X_{\sub}$ the shift orbit
closure of some periodic point of $\vartheta$.
This notation is justified because $\X_{\vartheta}$
turns out to be independent of the particular $\sub$-periodic point.
We call $(X_{\sub},\sigma)$ a \emph{substitution} (or \emph{substitutive}) subshift.
It is well known that $X_\sub$ is minimal \cite{Gottschalk1963} and has a unique
Borel probability measure which is invariant under the action of the left shift \cite{Klein1972}.

Let us now state our main results.
We start by considering substitutions over two symbols whose associated
substitutive subshift is infinite (a subshift $(X,\sigma)$ is called \emph{finite}
if $X$ is finite; otherwise it is called \emph{infinite}).
In this case, we can establish a closed formula for the amorphic complexity of
$(X_\sub,\sigma)$.
The proof of the next statement can be found at the end of Section \ref{sec:dimensional estimates}.

\begin{thm}\label{thm: general formula ac for subs over two symbols}
	Let $\vartheta:\{0,1\}\to\{0,1\}^+$ be a primitive constant length substitution.
	Assuming that $(X_{\sub},\sigma)$ is infinite, we get
	\begin{equation*}
		\ac\big(\left.\sigma\right|_{X_{\sub}}\big)
		=\frac{\log\abs{\sub}}{\log\abs{\sub}-\log\abs{\sub}_\ast},
	\end{equation*}
	with $0<\abs{\sub}_\ast\leq \abs{\sub}$ the number of positions where $\sub(0)$
	and $\sub(1)$ differ.
\end{thm}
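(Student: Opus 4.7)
The plan is to apply the dimensional characterization of amorphic complexity (Theorem \ref{thm: amorphic complexity and box dimension}) and realise $X_\sub$ as the attractor of a self-similar IFS in the Besicovitch quotient. By the characterization, it suffices to compute $\Dim_B(\widetilde X_\sub)$, where $\widetilde X_\sub$ denotes the image of $X_\sub$ in the quotient of $\{0,1\}^\Z$ obtained by collapsing pairs with zero Besicovitch pseudo-distance $d_B$.

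With $\ell = \abs{\sub}$, I would consider the maps $F_j := \sigma^j \circ \sub$ for $j = 0, 1, \ldots, \ell-1$, descended to the quotient. A direct counting of mismatched positions in $\sub(x)$ and $\sub(y)$ for binary sequences $x,y$ yields
\[
d_B(\sub(x), \sub(y)) \;=\; \frac{\abs{\sub}_\ast}{\abs{\sub}}\, d_B(x,y),
\]
because $x_k \neq y_k$ forces $\{\sub(x_k), \sub(y_k)\} = \{\sub(0), \sub(1)\}$ and those two words differ in exactly $\abs{\sub}_\ast$ positions. Since $\sigma$ is a $d_B$-isometry, each $F_j$ descends to a similarity on $\widetilde X_\sub$ with the common ratio $r := \abs{\sub}_\ast/\abs{\sub}$.

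The identity $\widetilde X_\sub = \bigcup_{j=0}^{\ell-1} F_j(\widetilde X_\sub)$ should follow from the recognisability (unique desubstitution) property for primitive aperiodic constant length substitutions, which applies precisely because $(X_\sub, \sigma)$ is assumed infinite. Provided the IFS $\{F_0, \ldots, F_{\ell-1}\}$ satisfies an appropriate separation condition, the Moran-type dimension formula for self-similar attractors on complete metric spaces (the framework of IFS on general metric spaces acknowledged in the paper) forces $s := \Dim_B(\widetilde X_\sub)$ to solve $\ell \cdot r^s = 1$, i.e.
\[
s \;=\; \frac{\log \abs{\sub}}{\log \abs{\sub} - \log \abs{\sub}_\ast},
\]
which combined with the first step gives the claimed value of $\ac\big(\left.\sigma\right|_{X_\sub}\big)$.

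The main obstacle is verifying the separation condition. The pieces $F_j(\widetilde X_\sub)$ correspond to points whose origin lies in the $j$-th column of the substitution; in the full shift these are disjoint clopen cylinders, but the Besicovitch pseudo-metric ignores differences on density-zero sets, so disjointness in $X_\sub$ does not automatically produce positive $d_B$-separation between them. This is exactly the place where the infinite/aperiodic hypothesis is essential, since in the periodic case different columns may collapse in the quotient and the Moran calculation would break down. I would verify separation by showing that for $j \neq k$ and any representatives $\tilde x, \tilde y$, the distance $d_B(F_j(\tilde x), F_k(\tilde y))$ is bounded below by a positive constant controlled by the column structure of $\sub(0)$ versus $\sub(1)$; once this is in hand, the Moran computation applies and completes the proof.
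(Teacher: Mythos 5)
Your overall strategy is the same as the paper's: pass to the Besicovitch quotient, identify $[X_\sub]$ as the attractor of the IFS $\{\sigma^j\circ\sub\}_{j=0,\ldots,\abs{\sub}-1}$, note that on a binary alphabet each map is an exact similarity of ratio $\abs{\sub}_\ast/\abs{\sub}$ (this matches the paper's Proposition~\ref{prop: bounds on contraction rate}, where $c=C=\abs{\sub}-\abs{\sub}_\ast$), and read off the dimension from a Moran-type formula. However, the step you yourself flag as ``the main obstacle'' --- the strong separation condition --- is precisely the substantive content of the proof, and you leave it unproved. Moreover, the route you sketch for it (a lower bound ``controlled by the column structure of $\sub(0)$ versus $\sub(1)$'') is not obviously workable: comparing $\sigma^j\sub(x)$ with $\sigma^k\sub(y)$ at position $m$ compares entries from \emph{different} columns $(m+j)\bmod\abs{\sub}$ and $(m+k)\bmod\abs{\sub}$, so knowing the columns of $\sub$ does not directly yield a positive density of disagreements. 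The paper's actual argument is structural: for an infinite subshift one has $\gamma(\abs{\sub})=\abs{\sub}$ (Dekking), so the sets $(\sigma^j\circ\sub)(X_\sub)$ form a cyclic $\sigma^{\abs{\sub}}$-minimal partition, and any two \emph{distinct} minimal subshifts have disjoint $n$-languages for some $n$, which forces $D\geq 1/n$ between their points (Proposition~\ref{prop: 0 besicovitch distance implies equality of subshifts}, applied after a higher-power recoding in Lemma~\ref{lem: shift-minimal partition}). Your appeal to recognisability gives the set-theoretic decomposition of $X_\sub$ itself but, as you note, says nothing about separation in the quotient; so as written the proof is incomplete at its critical point.

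Two further gaps. First, to invoke the attractor machinery and the lower Moran bound (the paper uses \cite[Proposition~4.10]{RajalaVilppolainen2011}) one needs $[X_\sub]$ to be \emph{compact} in $([\Sigma],D)$; the paper obtains this from mean equicontinuity, i.e.\ from discrete spectrum, which for a binary substitution must itself be deduced from $\abs{\sub}_\ast<\abs{\sub}$ via purity and Dekking's coincidence criterion (Theorem~\ref{thm: pure sub discrete iff coincidence}). Your proposal never establishes compactness or total boundedness, which is also what makes the box dimension (and hence the appeal to Theorem~\ref{thm: amorphic complexity and box dimension}) meaningful. Second, the statement permits $\abs{\sub}_\ast=\abs{\sub}$, in which case the asserted value is $\infty$; there the maps $\sigma^j\circ\sub$ are isometries, the IFS framework collapses, and one must instead argue that the absence of a coincidence rules out discrete spectrum and hence forces infinite separation numbers (Corollary~\ref{cor:discrete spectrum iff finite sep numbers}). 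Your proposal silently assumes contractivity and so does not cover this case.
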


An interesting consequence of Theorem \ref{thm: general formula ac for subs over two symbols}
is that the amorphic complexity of infinite substitution subshifts over two symbols
with discrete spectrum is always finite and bounded from below by one.
As it turns out, this holds true over general alphabets.

\begin{thm}\label{thm: intro general lower and upper bounds for ac}
	Assume $\sub:A\to\nefw$ is a primitive substitution of constant length.
	If $(X_{\sub},\sigma)$ is infinite and has discrete spectrum, then
	\begin{equation*}
		1\leq\uac\big(\left.\sigma\right|_{X_{\sub}}\big)
		\leq\oac\big(\left.\sigma\right|_{X_{\sub}}\big)<\infty.
	\end{equation*}
\end{thm}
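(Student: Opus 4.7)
The plan is to invoke the box-dimensional characterisation of amorphic complexity (Lemma~\ref{prop: euiv family metrics} together with Theorem~\ref{thm: amorphic complexity and box dimension}) and then establish both bounds as box-dimension estimates on the Besicovitch image of $X_\sub$.

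For the upper bound, the strategy is to realise this image as the attractor of an iterated function system. Writing $\ell=\abs{\sub}$, define $T_i\colon X_\sub\to X_\sub$ by $T_i(x)=\sigma^i\sub(x)$ for $i=0,\ldots,\ell-1$. Recognisability of primitive constant length substitutions yields $X_\sub=\bigcup_{i=0}^{\ell-1}T_i(X_\sub)$, and a direct count of disagreements in the Besicovitch pseudo-metric shows that each $T_i$ is a contraction of ratio at most
\[
r\=\frac{1}{\ell}\,\max_{a\neq b\in A}\#\{0\leq j<\ell:\sub(a)_j\neq\sub(b)_j\}.
\]
The infinitude of $X_\sub$ rules out the degenerate case in which two distinct letters share the same $\sub$-image (which would force $X_\sub$ to collapse to a finite set via the hierarchical structure), so $r<1$. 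Applying standard dimension estimates for IFS attractors on complete metric spaces then gives $\Dim_B\leq \log\ell/\log(1/r)<\infty$, and hence $\oac(\sigmaX)<\infty$.

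For the lower bound, I would exploit discrete spectrum. Discrete spectrum implies that $X_\sub$ is mean equicontinuous, so the Besicovitch pseudo-metric $d_B$ descends to a genuine metric on the maximal equicontinuous factor $Z$. For a primitive constant length substitution with discrete spectrum and infinite $X_\sub$, the factor $Z$ is an infinite odometer (an inverse limit $\varprojlim\Z/p_k\Z$ with $p_k\to\infty$), whose natural metric $d_Z$ has box dimension exactly $1$ (balls of radius $\sim p_k^{-1}$ partition $Z$ into $p_k$ pieces). The key estimate is a Lipschitz bound $d_Z(\pi(x),\pi(y))\leq C\,d_B(x,y)$, which follows from the hierarchical block structure of constant length substitutions: if the first $n$ odometer digits of $\pi(x)$ and $\pi(y)$ differ, then $x$ and $y$ must fail to align across a positive fraction of length-$\ell^n$ blocks. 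A covering estimate then yields $\Dim_B\geq 1$ for the Besicovitch image of $X_\sub$, and hence $\uac(\sigmaX)\geq 1$.

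The main obstacle I expect is the careful handling of the IFS within the Besicovitch quotient for the upper bound: one must pass to pseudo-metric equivalence classes, verify that the IFS attractor coincides with the full Besicovitch image of $X_\sub$ (rather than a strictly smaller set), and apply the IFS dimension theory in the general complete metric space setting rather than in $\R^n$. The strict inequality $r<1$ further depends crucially on recognisability together with the assumption that $X_\sub$ is infinite. The lower bound is conceptually simpler once mean equicontinuity is invoked, but the Lipschitz comparison between $d_B$ and the natural odometer metric still requires attention to the precise identification of $Z$ with an inverse limit adapted to the block structure of $\sub$.
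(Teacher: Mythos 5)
Your overall strategy---an IFS in the Besicovitch space for the upper bound, a dimension count on the maximal equicontinuous factor for the lower bound---is in the right spirit, but the upper bound as written contains a fatal gap. The ratio you define is $r=\frac{1}{\abs{\sub}}\max_{a\neq b}\#\{j:\sub(a)_j\neq\sub(b)_j\}$, and $r<1$ holds precisely when every pair of distinct letters \emph{agrees} in at least one position, i.e.\ when $\sub$ admits a coincidence of order one. This is \emph{not} implied by the infinitude of $X_\sub$: for the Thue--Morse substitution $0\mapsto 01$, $1\mapsto 10$ the two images disagree at every position, so $r=1$ and no power of $\sub$ is a contraction on the Besicovitch space, even though $X_\sub$ is infinite. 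The degenerate case you exclude (two letters with the same $\sub$-image) is the opposite extreme $c_{a,b}=\abs{\sub}$; ruling it out is what one needs for the \emph{lower} bound (bi-Lipschitz from below), not for the contraction property. The tell-tale sign is that your upper-bound argument never invokes discrete spectrum, yet it must: Thue--Morse satisfies every hypothesis you actually use and has infinite amorphic complexity. The paper's route is: for \emph{pure} substitutions, discrete spectrum is equivalent (Dekking) to the existence of a coincidence of some order $k$, whence $\Theta=\sub^k$ is a contraction and the IFS argument runs for $\Theta$; for non-pure substitutions one must first pass to the pure base and transfer the estimates back through the cyclic $\sigma^{h(\sub)}$-minimal partition. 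Your sketch omits both steps.

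For the lower bound, the odometer route is close in spirit to what the paper does, but the asserted Lipschitz estimate $d_Z(\pi(x),\pi(y))\leq C\,D_B(x,y)$ is exactly the hard quantitative content and is left unproved. Unwound, it says that points lying in different elements of the cyclic $\sigma^{\abs{\sub}^n}$-minimal partition are at Besicovitch distance at least $c\cdot\abs{\sub}^{-n}$ uniformly in $n$; ``failing to align across a positive fraction of length-$\abs{\sub}^n$ blocks'' does not by itself yield a positive density of letter disagreements. The paper obtains this separation from the strong separation condition of the IFS together with the lower Lipschitz bound $(\abs{\Theta}-C(\Theta))/\abs{\Theta}$ on the maps, and that bound is nondegenerate only after replacing $\sub$ by a one-to-one substitution generating a conjugate subshift (Blanchard--Durand--Maass); the ratio of the contraction count $\abs{\Theta}$ to this lower Lipschitz constant is then what forces the dimension to be at least $1$. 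Your proposal omits the one-to-one reduction entirely, as well as the height issue (when $h(\sub)>1$ the factor is an odometer times a finite cyclic group and one must pass to the pure base). So both halves need the coincidence/one-to-one/pure-base machinery that the sketch bypasses.
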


We want to point out that the proof of the previous theorem yields
means to compute concrete lower and upper bounds for the
amorphic complexity of infinite substitutive subshifts with discrete spectrum, see
Section \ref{sec: finiteness and positivity ac}.
Finally, using Theorem \ref{thm: intro general lower and upper bounds for ac} and further
properties of substitution subshifts together with the general theory of amorphic
complexity, we obtain the following trichotomy, see also
Section \ref{sec: finiteness and positivity ac}.

\begin{cor}
	Suppose $\sub:A\to\nefw$ is a primitive substitution of constant length.
	\begin{enumerate}[(i)]
		\item $\ac(\sigma|_{X_\sub})=0$ iff	$(X_{\sub},\sigma)$ is finite.
		\item $1\leq\uac(\sigma|_{X_\sub})\leq\oac(\sigma|_{X_\sub})<\infty$
			iff $(X_{\sub},\sigma)$ has discrete spectrum and is infinite.
		\item $\ac(\sigma|_{X_\sub})=\infty$ iff $(X_{\sub},\sigma)$ has
			partly continuous spectrum.
	\end{enumerate}
\end{cor}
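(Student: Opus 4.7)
The plan is to combine Theorem \ref{thm: intro general lower and upper bounds for ac} with three supplementary facts: that $(X_\sub,\sigma)$ is minimal and uniquely ergodic for primitive $\sub$; that any finite subshift trivially has $\ac=0$; and that, as established in \cite{FuhrmannGroegerJaeger2016}, any minimal system of finite amorphic complexity is mean equicontinuous. Combining the last ingredient with the Li--Tu--Ye theorem, which characterizes minimal uniquely ergodic mean equicontinuous systems as precisely those with discrete spectrum, will allow us to identify $\ac=\infty$ with the presence of a non-trivial continuous spectral component.

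First, I would prove (i) in one direction: if $X_\sub$ is finite then, by minimality, it consists of a single periodic orbit, so the separation numbers $\Sep(\sigma|_{X_\sub},\delta,\nu)$ are uniformly bounded by $\abs{X_\sub}$ independently of $\nu$, and therefore $\ac=0$. Conversely, if $X_\sub$ is infinite, then either the spectrum of $(X_\sub,\sigma)$ is discrete---in which case Theorem \ref{thm: intro general lower and upper bounds for ac} yields $\uac\geq 1$---or the spectrum is partly continuous, and the argument below will give $\ac=\infty$. In both situations $\ac>0$, which completes (i) by contraposition.

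Next, I would establish (iii). If the spectrum of $(X_\sub,\sigma)$ is partly continuous, then by minimality and unique ergodicity, Li--Tu--Ye implies that $(X_\sub,\sigma)$ is not mean equicontinuous; in view of the cited FGJ 2016 result, this forces $\ac(\sigma|_{X_\sub})=\infty$. For the reverse direction, if $\ac=\infty$ then $X_\sub$ can be neither finite (by (i)) nor infinite with discrete spectrum (by Theorem \ref{thm: intro general lower and upper bounds for ac}), leaving only the option that $(X_\sub,\sigma)$ has partly continuous spectrum. Finally, (ii) follows by elimination: if $(X_\sub,\sigma)$ is infinite and has discrete spectrum then $1\leq\uac\leq\oac<\infty$ by Theorem \ref{thm: intro general lower and upper bounds for ac}, and the converse is exactly what remains after (i) and (iii) are known.

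The main obstacle is invoking the implication ``$\ac<\infty \Rightarrow$ mean equicontinuity'' for minimal systems, since one needs to locate and cite the precise form of this statement from \cite{FuhrmannGroegerJaeger2016} and verify that the hypotheses (minimality, and the appropriate uniformity in the definition of $\ac$) fit the substitutive setting. The remaining work is essentially bookkeeping, since the spectral dichotomy (discrete versus partly continuous) and the minimal unique ergodicity of $(X_\sub,\sigma)$ are standard for primitive constant length substitutions.
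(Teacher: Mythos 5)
Your proposal is correct and follows essentially the same route as the paper: establish the three forward implications (finite $\Rightarrow \ac=0$; infinite with discrete spectrum $\Rightarrow 1\leq\uac\leq\oac<\infty$ via Theorem \ref{thm: intro general lower and upper bounds for ac}; partly continuous spectrum $\Rightarrow \ac=\infty$ via the mean equicontinuity/separation-number characterization) and conclude the equivalences by mutual exclusivity of the three cases. The only remark worth making is that the implication you flag as the main obstacle is already packaged in the paper as Corollary \ref{cor:discrete spectrum iff finite sep numbers} (built from Theorem \ref{thm:equivalence_mean_equi_fin_sep_num} and Host's theorem), so no separate citation hunt in \cite{FuhrmannGroegerJaeger2016} is needed.
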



\section{Basic notation and definitions}\label{sec: preliminaries}

Given a continuous self-map $f:X\to X$ on a compact metric space $(X,d)$, we call
the pair $(X,f)$ a \emph{(topological) dynamical system}.
We say $(X,f)$ is \emph{invertible} if $f$ is invertible.\footnote{For future
reference, we would like to phrase some of the preliminary results in the
non-invertible setting,  although our main results are actually concerned with
invertible systems.}
Given two dynamical systems $(X,f)$ and $(Y,g)$, we say  $(Y,g)$ is a \emph{factor}
of $(X,f)$ if there exists a continuous onto map $h:X\to Y$ such that $h\circ f=g\circ h$.
If additionally, $h$ is invertible, then $h$ is a \emph{conjugacy} and we say
$(X,f)$ and $(Y,g)$ are \emph{conjugate}.

A system $(X,f)$ is \emph{equicontinuous} if the family $(f^n)_{n\in\N}$ is
uniformly equicontinuous, that is, if for all $\nu>0$ there is $\delta>0$ such that
for all $x,y\in X$ with $d(x,y)<\delta$ we have $d(f^n(x),f^n(y))<\nu$ ($n\in\N$).
It is well known, see for instance \cite[Theorem~2.1]{Downarowicz2005}, that every
dynamical system $(X,f)$ has a unique (up to conjugacy) \emph{maximal equicontinuous factor}:
an equicontinuous factor $(Y,g)$ so that every other equicontinuous factor of
$(X,f)$ is also a factor of $(Y,g)$.

A subset $E\subseteq X$ is \emph{$f$-invariant} if it is closed and $f(E)= E$.
In this case, we call $(E,\left.f\right|_E)$ (and sometimes synonymously $E$ itself)
a \emph{subsystem} of $(X,f)$ and usually just write $(E,f)$ for notational convenience.
We say $E$ is \emph{$f$-minimal} if it is $f$-invariant and does not contain any
non-empty proper subset which is $f$-invariant.
In case that $X$ is $f$-minimal itself, we also say that $(X,f)$ is \emph{minimal}.

Given a topological dynamical system $(X,f)$, a Borel probability measure $\mu$
on $X$ is called \emph{$f$-invariant} if $\mu(f^{-1}(E))=\mu(E)$ for all Borel
measurable sets $E\ssq X$.
An invariant measure $\mu$ is called \emph{ergodic} if for all Borel measurable
$E\ssq X$ with $f^{-1}(E)=E$ we have $\mu(E)\in \{0,1\}$.
We say $(X,f)$ is \emph{uniquely ergodic} if there exists exactly one $f$-invariant
measure $\mu$.
Note that in this case, the unique invariant measure $\mu$ is ergodic.
 
We will mainly deal with bi-infinite shift spaces $\Sigma=A^\Z$, where $A$ is a
finite set also referred to as \emph{alphabet} and $\Sigma$ carries the product
topology.
We define the \emph{full shift} to be the system $(\Sigma,\sigma)$, where
$\sigma\:\Sigma\to \Sigma$ is the left shift, that is, 
$\sigma( (x_n)_{n\in\Z})=(x_{n+1})_{n\in\Z}$ for all $(x_n)_{n\in\Z}\in \Sigma$.
For $n\in \N$ elements of $A^n$ are called \emph{words of length $n$}.
We set $\nefw=\bigcup_{n\in \N}A^n$. 

Subsystems of the full shift are referred to as \emph{subshifts}.
One way to obtain subshifts is to consider \emph{orbit closures}: given $x_0\in\Sigma$,
we define its orbit closure to be
$X_{x_0}=\overline{\{\sigma^k(x_0) : k\in\Z\}}\subseteq\Sigma$.
Clearly, $X_{x_0}$ is $\sigma$-invariant.
Given a subshift $(X,\sigma)$, the collection of all words of length $n$ that
\emph{appear} in $X$ is denoted by ${\mc L}^n(X)$.
That is, $w\in {\mc L}^n(X)$ if and only if $w\in A^n$ and there is $x\in X$ such
that $w_i=x_{i}$ ($i=0,\ldots,n-1$).
Given two minimal subshifts $X\neq Y$ of $\Sigma$, it is easy to see that
there is $n\in \N$ with ${\mc L}^n(X)\cap {\mc L}^n(Y)=\emptyset$.


\section{Asymptotic separation numbers and amorphic complexity}\label{sec:asymp sep numbers and ac}

In this section, we briefly introduce amorphic complexity which is a conjugacy
invariant that is of particular relevance in the class of mean equicontinuous systems
(see Section~\ref{sec:mean equicontinuity}).
Given a dynamical system $(X,f)$, $x,y\in X$ and $\delta>0$, we set
\begin{equation*}
	\dsep(f,\delta,x,y)=\left\{k\in\N_0 : d(f^k(x),f^k(y))\geq\delta\right\}.
\end{equation*}
For $\nu\in(0,1]$ we say $x$ and $y$ are \emph{$(f,\delta,\nu)$-separated}
if $\udens(\dsep(f,\delta,x,y))\geq\nu$, where $\udens(E)$ denotes the
\emph{upper density} of a subset $E\subseteq\N_0$ defined as
\begin{equation*}
	\udens(E)=\varlimsup\limits_{n\to\infty}\frac{\#(E\cap[0,n-1])}{n}.
\end{equation*}
A subset $S\subseteq X$ is said to be \emph{$(f,\delta,\nu)$-separated} if all pairs of
distinct points $x,y\in S$ are $(f,\delta,\nu)$-separated.  
The \emph{(asymptotic) separation numbers} of $(X,f)$, denoted by $\Sep(f,\delta,\nu)$
for $\delta>0$ and $\nu\in(0,1]$, are defined as the largest cardinality of
an $(f,\delta,\nu)$-separated set contained in $X$.
If $\Sep(f,\delta,\nu)$ is finite for all $\delta>0,\nu\in(0,1]$, we say $(X,f)$ has
{\em finite separation numbers}, otherwise we say it has {\em infinite separation numbers}.

As the next theorem suggests, finite separation numbers correspond to systems
of low dynamical complexity, that is, in particular to systems of zero entropy
and those without non-trivial weakly mixing measures (see, e.g., \cite{Walters1982}
for definitions of these concepts).

\begin{thm}[\cite{FuhrmannGroegerJaeger2016}]
	If a dynamical system $(X,f)$ has positive topological entropy or is weakly
	mixing with respect to some invariant probability measure $\mu$ with non-trivial
	support, then $(X,f)$ has infinite separation numbers.
\end{thm}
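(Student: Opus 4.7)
The plan is to handle the two hypotheses separately. For the weak mixing half, the idea is to exploit the equivalence between weak mixing of $\mu$ and ergodicity of $\mu\times\mu$ under $f\times f$: Birkhoff's ergodic theorem applied on $X\times X$ should directly produce pairs with the desired asymptotic separation, which can then be iterated by a Fubini-type extraction to an infinite pairwise separated set. For positive entropy, the plan is to invoke a known density-type chaos result --- concretely, the mean Li--Yorke chaos theorem (Huang--Ye, Downarowicz) --- and convert it to a statement about upper densities.

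More concretely, in the weak mixing case, since $\supp\mu$ contains at least two distinct points $p\neq q$, I would choose $0<\delta<d(p,q)/3$ so that the open set $A_\delta\=\{(x,y)\in X\times X\:d(x,y)\geq\delta\}$ contains $B(p,\delta)\times B(q,\delta)$ and hence has $\mu\times\mu$-measure at least $\nu\=\mu(B(p,\delta))\cdot\mu(B(q,\delta))>0$. Ergodicity of $f\times f$ with respect to $\mu\times\mu$, applied via Birkhoff's theorem to $\ind_{A_\delta}$, then gives, for $\mu\times\mu$-a.e.\ pair $(x,y)$, $\udens(\dsep(f,\delta,x,y))=(\mu\times\mu)(A_\delta)\geq\nu$. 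Letting $G$ be the full-measure set of such pairs and $X_0$ the full-$\mu$-measure set of $x$'s whose slice $G_x\=\{y\:(x,y)\in G\}$ has full $\mu$-measure (Fubini), one inductively picks $x_1,x_2,\ldots\in X_0$ with $x_n\in\bigcap_{i<n}G_{x_i}$: at each stage this intersection has full $\mu$-measure and hence is non-empty. Since $G$ is symmetric, the resulting sequence is pairwise $(f,\delta,\nu)$-separated, proving $\Sep(f,\delta,\nu)=\infty$.

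For the positive entropy case, the key input I would cite is mean Li--Yorke chaos: there exist $\delta_0>0$ and an uncountable set $S\subseteq X$ with $\limsup_{n\to\infty}n^{-1}\sum_{k=0}^{n-1}d(f^k(x),f^k(y))\geq\delta_0$ for all distinct $x,y\in S$. Splitting each Birkhoff sum according to whether $d(f^k(x),f^k(y))\geq\delta\=\delta_0/2$ or not yields
\[
\delta_0 \leq \limsup_{n\to\infty}\frac{1}{n}\sum_{k=0}^{n-1}d(f^k(x),f^k(y)) \leq \diam(X)\cdot\udens(\dsep(f,\delta,x,y))+\delta,
\]
so $\udens(\dsep(f,\delta,x,y))\geq\delta_0/(2\diam(X))=:\nu>0$, and thus $S$ is an uncountable $(f,\delta,\nu)$-separated set. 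The main obstacle lies precisely in this case: extracting a density-type separation from the purely combinatorial notion of topological entropy. Sidestepping the mean Li--Yorke machinery --- for instance, by passing via the variational principle to an ergodic measure of positive entropy and trying to reduce to the weak mixing case through its Pinsker factor --- is subtle, because Pinsker factors are only measure-theoretic and asymptotic separation is not obviously lifted back to the topological space $X$.
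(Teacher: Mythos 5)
The paper states this theorem without proof, importing it from \cite{FuhrmannGroegerJaeger2016}; your argument is correct and follows essentially the same route as that source: ergodicity of $\mu\times\mu$ plus Birkhoff's theorem and a Fubini-type extraction of an infinite separated set in the weakly mixing case, and Downarowicz's mean Li--Yorke (DC2) theorem in the positive entropy case. The only point you should make explicit is that you need the uniform-constant form of the mean Li--Yorke statement (a single $\delta_0>0$ valid for all pairs of the uncountable set $S$, not a pair-dependent one), since otherwise no single infinite $(f,\delta,\nu)$-separated subset can be extracted; this uniform version is indeed what Downarowicz's construction provides and what the cited proof invokes.
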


For systems with finite separation numbers, we may obtain further quantitative
information by studying the scaling behavior of the separation numbers as the
separation frequency $\nu$ goes to zero.
We define the \emph{lower} and \emph{upper amorphic complexity} of $(X,f)$ as
\begin{equation*}
	\uac(f)=\adjustlimits\sup_{\delta>0}\varliminf_{\nu\to 0}
		\frac{\log \Sep(f,\delta,\nu)}{-\log \nu}
	\quad\textnormal{and}\quad
	\oac(f)=\adjustlimits\sup_{\delta>0}\varlimsup_{\nu\to 0}
		\frac{\log\Sep(f,\delta,\nu)}{-\log \nu}. 
\end{equation*}
If both values coincide, we call $\ac(f)=\uac(f)=\oac(f)$ the {\em amorphic
complexity} of $(X,f)$. 
Observe that by allowing the above quantities to assume values
in $[0,\infty]$, they are actually well defined for any map $f:X\to X$.
In particular, systems with infinite separation numbers have infinite amorphic
complexity.

We refer the reader to \cite{FuhrmannGroegerJaeger2016} for an in-depth discussion
of amorphic complexity and several classes of examples.
However, the following statement is worth recalling.
\begin{thm}[\cite{FuhrmannGroegerJaeger2016}]\label{thm: properties ac}
	Let $(X,f)$ and $(Y,g)$ be dynamical systems.
	\begin{enumerate}[(i)]
			\item Suppose $(Y,g)$ is a factor of $(X,f)$.
					Then $\oac(f)\geq\oac(g)$ and $\uac(f)\geq \uac(g)$.
					In particular, amorphic complexity is an invariant of topological
					conjugacy.
			\item Let $m\in\N$. 
		Then $\oac(f^m)=\oac(f)$ and
					$\uac(f^m)=\uac(f)$.
	\end{enumerate}
\end{thm}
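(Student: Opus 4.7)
The plan is to establish each statement by comparing the separation numbers $\Sep$ of the systems involved. For (i), the standard pushforward trick via the factor map suffices; for (ii), one direction follows from the embedding $\{mk\}\ssq\dsep$, while the other requires uniform continuity of $f,f^2,\ldots,f^{m-1}$.

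For (i), let $h:X\to Y$ be a factor map with $h\circ f=g\circ h$. Compactness of $X$ makes $h$ uniformly continuous, so for each $\delta>0$ there is $\delta'>0$ such that $d_X(x,x')<\delta'$ implies $d_Y(h(x),h(x'))<\delta$. Given a $(g,\delta,\nu)$-separated set $S\ssq Y$, pick a preimage $x_y\in h^{-1}(y)$ for each $y\in S$. The relation $h\circ f^k=g^k\circ h$ and the contrapositive of uniform continuity give $\dsep(g,\delta,y,y')\ssq\dsep(f,\delta',x_y,x_{y'})$ for distinct $y,y'\in S$, so the lift $\{x_y:y\in S\}$ is $(f,\delta',\nu)$-separated. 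Hence $\Sep(g,\delta,\nu)\leq\Sep(f,\delta',\nu)$; passing to $\liminf/\limsup$ as $\nu\to 0$ and then to $\sup_\delta$ gives $\uac(g)\leq\uac(f)$ and $\oac(g)\leq\oac(f)$. Conjugacy invariance follows by applying the inequality to both $h$ and $h^{-1}$.

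For (ii), I would establish both inequalities directly at the level of $\Sep$. If $S$ is $(f^m,\delta,\nu)$-separated, then for distinct $x,y\in S$ the set $\{mk:k\in\dsep(f^m,\delta,x,y)\}\ssq\dsep(f,\delta,x,y)$ has upper density $\geq\nu/m$, giving $\Sep(f^m,\delta,\nu)\leq\Sep(f,\delta,\nu/m)$. The substitution $\mu=\nu/m$ replaces $-\log\nu$ by $-\log\mu-\log m$ in the denominator, and since the additive constant becomes asymptotically irrelevant as $\mu\to 0$, one concludes $\oac(f^m)\leq\oac(f)$ and $\uac(f^m)\leq\uac(f)$. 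For the reverse, uniform continuity of $f,f^2,\ldots,f^{m-1}$ provides $\delta'>0$ such that $d(z,w)<\delta'$ implies $d(f^r z,f^r w)<\delta$ for all $0\leq r<m$. Hence whenever $k\in\dsep(f,\delta,x,y)$ with $k=mq+r$, the contrapositive forces $mq\in\dsep(f^m,\delta',x,y)$. Since $k\mapsto\lfloor k/m\rfloor$ is at most $m$-to-one, this yields
\[
\#(\dsep(f^m,\delta',x,y)\cap[0,K-1])\geq \#(\dsep(f,\delta,x,y)\cap[0,mK-1])/m,
\]
and hence $\Sep(f,\delta,\nu)\leq\Sep(f^m,\delta',\nu)$ once the density comparison is made.

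The main technical care lies in this last density step: to pass from the counting inequality to $\udens(\dsep(f^m,\delta',x,y))\geq\udens(\dsep(f,\delta,x,y))$, one selects $N_\ell\to\infty$ along which $\#(\dsep(f,\delta,x,y)\cap[0,N_\ell-1])/N_\ell\to\udens(\dsep(f,\delta,x,y))$, rounds up to $K_\ell=\lceil N_\ell/m\rceil$, and uses $mK_\ell/N_\ell\to 1$ to see that no density is lost when restricting to multiples of $m$. Everything else is routine manipulation of suprema, $\liminf$, and $\limsup$.
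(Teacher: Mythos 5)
Your argument is correct. Note, however, that the paper does not prove Theorem~\ref{thm: properties ac} at all: it is imported verbatim from \cite{FuhrmannGroegerJaeger2016}, so there is no in-paper proof to compare against. Your direct argument at the level of separation numbers is the standard one and all the delicate points are handled properly: in (i) the lift of a $(g,\delta,\nu)$-separated set along preimages together with the contrapositive of uniform continuity of $h$ gives $\dsep(g,\delta,y,y')\ssq\dsep(f,\delta',x_y,x_{y'})$; in (ii) the loss of a factor $m$ in the density when passing from $f^m$ to $f$ is correctly absorbed into the asymptotically irrelevant additive constant $\log m$ in the denominator, and the reverse direction correctly uses that $k\mapsto\lfloor k/m\rfloor$ is at most $m$-to-one together with the observation that upper density can be computed along the subsequence of multiples of $m$. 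The only point of contact with the paper's own machinery is the conjugacy-invariance clause of (i): for subshifts the paper re-derives this geometrically (Corollary~\ref{cor: invariance amorphic complexity}) by showing that a conjugacy induces a bi-Lipschitz homeomorphism of the projections in Besicovitch space (Lemma~\ref{lem: conjugacy between power shifts is bi-Lipschitz}) and invoking the Lipschitz invariance of box dimension together with Theorem~\ref{thm: amorphic complexity and box dimension}; your proof is more elementary and works for arbitrary dynamical systems, whereas the paper's route is what enables the subsequent IFS arguments.
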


Since our main results involve invertible dynamical systems, let us close this
section with a comment on the following issue.
If $f$ is invertible, it may seem natural to define
asymptotic separation numbers and amorphic complexity in a slightly different way:
for $\delta>0$, $\nu\in(0,1]$ and $x,y\in X$  we could consider
\begin{equation*}
	\dsep'(f,\delta,x,y)=\left\{k\in\Z : d(f^k(x),f^k(y))\geq\delta\right\}
\end{equation*}
instead of $\dsep(f,\delta,x,y)$.
Thus, we may consider a modified upper density for
subsets $E\in\Z$,
\begin{equation*}
	\udens'(E)=\varlimsup\limits_{n\to\infty}\frac{\#(E\cap[-n+1,n-1])}{2n-1},
\end{equation*}
to define the $(f,\delta,\nu)$-separation of two points $x$ and $y$ and hence
the asymptotic separation numbers  as well as the amorphic complexity (where all
subsequent  definitions from this section would carry over in the obvious way).
However, in \cite{FuhrmannGroegerJaegerKwietniak2019} it is shown that for a
natural class of invertible systems (which includes mean equicontinuous systems)
this neither affects whether $(X,f)$ has infinite separation numbers nor the value
of the (lower and upper) amorphic complexity.
For that reason, we stick with the above definitions.


\section{Mean equicontinuity and finite separation numbers}\label{sec:mean equicontinuity}

In this section, we discuss a canonical class of systems with finite separation
numbers which in particular comprises all substitutive subshifts associated to
primitive substitutions with discrete spectrum (see Section~\ref{sec: discrete spectrum}).
We say a dynamical system $(X,f)$ is \emph{(Besicovitch-) mean equicontinuous}
if for every $\nu>0$ there is $\delta>0$ such that for all $x,y\in X$ with
$d(x,y)<\delta$ we have
\[
	D_B(x,y)=
		\varlimsup\limits_{n\to\infty}\frac{1}{n}\sum\limits_{k=0}^{n-1}d(f^k(x),f^k(y))<\nu.
\]
This notion was introduced by Li, Tu and Ye in \cite{LiTuYe2015}.
It is immediately seen to be equivalent to the concept of mean Lyapunov-stability
which was introduced in 1951 by
Fomin \cite{Fomin1951} in the context of systems with discrete spectrum.
A first systematic treatment is due to Auslander \cite{Auslander1959}.
For recent activity related to these notions,
see for example \cite{DownarowiczGlasner2016,Garcia-Ramos2017,FanJiang2018,
QiuZhao2018,FuhrmannGroegerLenz2018,HuangWangZhang}.

It is worth noting that $D_B$ is a pseudometric.
With this in mind, it is not hard to see that $(X,f)$ is mean equicontinuous if
and only if  $D_B\: X\times X \to [\vphantom{]}0,\infty)$ is continuous.
Further, it is immediate that
\begin{align}\label{eq: besicovitch metric and density of separation}
	\delta\cdot\udens(\dsep(f,\delta,x,y))\leq D_B(x,y)\qquad (x,y\in X),
\end{align}
where $\delta>0$.
Since $X$ is compact, this yields the next statement.
\begin{prop}\label{prop: mean_equicont_finite_sep_num}
	If $(X,f)$ is mean equicontinuous, then it has finite separation numbers.
\end{prop}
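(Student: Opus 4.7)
The plan is to argue by a direct compactness argument, combining the given inequality \eqref{eq: besicovitch metric and density of separation} with the mean equicontinuity hypothesis to conclude that any $(f,\delta,\nu)$-separated set is uniformly separated in the metric $d$, hence finite.

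Fix $\delta>0$ and $\nu\in(0,1]$, and let $S\ssq X$ be an $(f,\delta,\nu)$-separated set. First I would invoke mean equicontinuity: given the threshold $\delta\nu>0$ (or, to be safe, some $\eta<\delta\nu$), there exists $\delta_0>0$ such that $d(x,y)<\delta_0$ forces $D_B(x,y)<\delta\nu$. On the other hand, for any pair of distinct points $x,y\in S$, the defining property yields $\udens(\dsep(f,\delta,x,y))\geq \nu$, and then the inequality \eqref{eq: besicovitch metric and density of separation} gives
\begin{equation*}
	D_B(x,y)\;\geq\;\delta\cdot\udens(\dsep(f,\delta,x,y))\;\geq\;\delta\nu.
\end{equation*}

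Combining these two facts in the contrapositive direction, any two distinct points of $S$ must satisfy $d(x,y)\geq\delta_0$, i.e.\ $S$ is $\delta_0$-separated in $(X,d)$. Since $X$ is compact, it is totally bounded, so any $\delta_0$-separated subset is finite. Hence $\#S<\infty$, and taking the supremum over all such $S$ yields $\Sep(f,\delta,\nu)<\infty$.

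There is essentially no obstacle here: the proof is a two-line chain (mean equicontinuity $+$ inequality \eqref{eq: besicovitch metric and density of separation} $+$ compactness). The only subtlety to get right is the order of quantifiers, namely that $\delta_0$ depends on the chosen $\delta$ and $\nu$ (not uniformly on all separated sets), which is exactly what is needed for pointwise finiteness of $\Sep(f,\delta,\nu)$ rather than uniform boundedness.
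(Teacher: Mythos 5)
Your proof is correct and is exactly the argument the paper intends: it simply spells out the details that the paper leaves implicit when it writes that inequality \eqref{eq: besicovitch metric and density of separation} together with compactness of $X$ ``yields the next statement.'' No difference in approach.
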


In the minimal case we can say even more.
To that end, let us introduce the following notion: a system $(X,f)$ is called
\emph{mean sensitive} if there is $\nu>0$ such that for every $x\in X$ and
$\delta>0$ there exists $y\in X$ with $d(x,y)<\delta$ and $D_B(x,y)>\nu$.

\begin{prop}[{\cite[Corollary~5.5 \& Proposition~5.1(4)]{LiTuYe2015}}]\label{prop: mean_sens_inf_sep_num}
	A minimal dynamical system $(X,f)$ is either mean equicontinuous or mean sensitive.
	Moreover, if $(X,f)$ is mean sensitive, then $(X,f)$ has infinite separation numbers.
\end{prop}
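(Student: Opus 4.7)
The plan is to prove the two assertions in turn.

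For the dichotomy, I would consider, for each $\nu>0$, the set
\begin{equation*}
E_\nu=\bigl\{x\in X:\exists\,\delta>0\text{ such that }D_B(x,y)<\nu\text{ for all }y\text{ with }d(x,y)<\delta\bigr\},
\end{equation*}
which is manifestly open. The crucial property is that $E_\nu$ is \emph{backward} $f$-invariant: if $f(x)\in E_\nu$ with parameter $\delta'$, continuity of $f$ provides $\delta$ with $f(B_\delta(x))\subseteq B_{\delta'}(f(x))$, and the Cesàro identity $D_B(fu,fv)=D_B(u,v)$ then forces $x\in E_\nu$. Consequently $X\smin E_\nu$ is closed and forward-invariant; in a minimal system the nested intersection $\bigcap_{n\geq 0}f^n(X\smin E_\nu)$ (of compact sets) is either empty or strongly $f$-invariant, hence equal to $X$ by minimality. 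This forces $E_\nu\in\{\emptyset,X\}$, and vacuity of some $E_\nu$ is precisely mean sensitivity at level $\nu$.

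In the remaining case $E_\nu=X$ for every $\nu>0$. Fix $\nu>0$, choose $\delta(x)$ witnessing $x\in E_\nu$ for each $x\in X$, extract a finite subcover $\{B_{\delta(x_i)/2}(x_i)\}_{i=1}^N$ of the compactum $X$, and let $\delta_0=\min_i\delta(x_i)/2$. Whenever $d(x,y)<\delta_0$, picking $i$ with $x\in B_{\delta(x_i)/2}(x_i)$ yields both $D_B(x_i,x)<\nu$ and $D_B(x_i,y)<\nu$, so the pseudometric triangle inequality gives $D_B(x,y)<2\nu$. As $\nu$ was arbitrary, mean equicontinuity is established.

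For the second assertion, the cornerstone is the elementary estimate
\begin{equation*}
D_B(x,y)\leq\delta+\diam(X)\cdot\udens\bigl(\dsep(f,\delta,x,y)\bigr),
\end{equation*}
obtained by splitting the Cesàro average at threshold $\delta$. Taking $\delta=\nu/2$ with $\nu$ the sensitivity constant, any pair with $D_B(x,y)>\nu$ is $(f,\nu/2,\nu/(2\diam X))$-separated. It therefore suffices to construct, for every $N$, an $N$-point subset of $X$ whose pairwise $D_B$-distances all exceed $\nu$.

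This is the main obstacle, since the triangle inequality for $D_B$ alone is too weak to sustain a naive induction. I would handle it by first showing that, in a minimal mean sensitive system, the set $S(x)=\{y\in X:D_B(x,y)>\nu\}$ is dense in $X$ for every $x$; this combines the abundance of $D_B$-far points near $x$ given by sensitivity, density of the orbit of $x$, and the Cesàro identity $D_B(fu,fv)=D_B(u,v)$ used to transport local far-pairs to arbitrary open sets. Since $D_B=\inf_N\sup_{n\geq N}g_n$ for the continuous functions $g_n(x,y)=\frac{1}{n}\sum_{k=0}^{n-1}d(f^kx,f^ky)$, each $S(x)$ is also $G_\delta$. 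The Baire category theorem then trivialises the induction: given $x_1,\dots,x_n$ with pairwise $D_B$-distance exceeding $\nu$, the intersection $\bigcap_{i=1}^n S(x_i)$ is a dense $G_\delta$, hence non-empty, and any of its elements serves as $x_{n+1}$.
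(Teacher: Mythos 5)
The paper does not prove this proposition --- it quotes it from Li--Tu--Ye --- so your argument can only be judged on its own terms. Its overall architecture (an open set of ``mean equicontinuity points at level $\nu$'' that is backward invariant and hence empty or everything by minimality; then a Baire-category induction to produce large $D_B$-separated sets) is sound and close to the standard one. But two steps need repair. The set $E_\nu$ as you define it is \emph{not} manifestly open: if $x\in E_\nu$ with witness $\delta$ and $x'$ is close to $x$, the triangle inequality only gives $D_B(x',y)\leq D_B(x',x)+D_B(x,y)<2\nu$ for $y$ near $x'$, i.e.\ $x'\in E_{2\nu}$, not $x'\in E_\nu$. The standard fix is to work with the symmetric version $E^*_\nu=\{x: \exists\,\delta>0 \text{ s.t. } D_B(y,z)<\nu \text{ for all } y,z\in B_\delta(x)\}$, which genuinely is open, satisfies $E^*_\nu\subseteq E_\nu\subseteq E^*_{2\nu}$, and is backward invariant by the same Ces\`aro-shift identity $D_B(f(u),f(v))=D_B(u,v)$ (which is correct, since the discrepancy between the two Ces\`aro sums is $O(1/n)$). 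The harmless factor of $2$ aside, the rest of your first part --- closed forward-invariant complement, nested images, minimality, and the Lebesgue-number/compactness argument --- goes through unchanged.

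In the second part, your separation estimate $D_B(x,y)\leq\delta+\diam(X)\cdot\udens(\dsep(f,\delta,x,y))$ is correct and does exactly the job you want. However, you present the density of $S(x)=\{y: D_B(x,y)>\nu\}$ as the main obstacle and propose to attack it via orbit density and transported far pairs; none of that is necessary. Given any ball $B_\delta(z)$, mean sensitivity yields $w\in B_\delta(z)$ with $D_B(z,w)>\nu$, and the triangle inequality forces $\max\{D_B(x,z),D_B(x,w)\}>\nu/2$, so $\{y: D_B(x,y)>\nu/2\}$ is dense for \emph{every} $x$ --- minimality plays no role here. Also, $\{y:D_B(x,y)>\nu/2\}$ is not literally a $G_\delta$ (the strict sublevel structure of $\inf_N\sup_{n\geq N}g_n$ does not pass to a countable intersection of open sets on the nose); use instead $T(x)=\bigcap_N\{y:\sup_{n\geq N}g_n(x,y)>\nu/2\}$, which is a $G_\delta$ sandwiched between $\{D_B(x,\cdot)>\nu/2\}$ and $\{D_B(x,\cdot)\geq\nu/2\}$. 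Your Baire induction then yields arbitrarily large sets with pairwise $D_B\geq\nu/2$, and the displayed estimate with $\delta=\nu/4$ shows these are $(f,\nu/4,\nu/(4\diam(X)))$-separated, giving infinite separation numbers. With these two patches the proof is complete.
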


As an immediate corollary of Proposition~\ref{prop: mean_equicont_finite_sep_num}
and Proposition~\ref{prop: mean_sens_inf_sep_num}, we obtain the following assertion.

\begin{thm}\label{thm:equivalence_mean_equi_fin_sep_num}
	Suppose $(X,f)$ is minimal.
	Then $(X,f)$ is mean equicontinuous if and only if $(X,f)$ has finite separation numbers.
\end{thm}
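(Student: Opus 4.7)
The plan is to derive the equivalence as a direct consequence of the two preceding propositions, treating the two implications separately.

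For the forward direction, assume $(X,f)$ is mean equicontinuous. Then Proposition~\ref{prop: mean_equicont_finite_sep_num} applies without any reference to minimality and yields immediately that $(X,f)$ has finite separation numbers. Note that the proof of that proposition ultimately rests on the inequality \eqref{eq: besicovitch metric and density of separation}, together with compactness of $X$ and continuity of $D_B$ (guaranteed by mean equicontinuity) to bound the number of points that can be pairwise $(f,\delta,\nu)$-separated by a finite covering argument in the $D_B$-pseudometric.

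For the converse, I would argue by contraposition. Suppose $(X,f)$ is minimal but not mean equicontinuous. By the dichotomy stated in Proposition~\ref{prop: mean_sens_inf_sep_num}, $(X,f)$ must then be mean sensitive. But the same proposition asserts that mean sensitive systems have infinite separation numbers, so $(X,f)$ does not have finite separation numbers. Contrapositively, if $(X,f)$ is minimal and has finite separation numbers, it is mean equicontinuous.

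The entire proof is essentially a one-line assembly of the two quoted propositions, so there is no genuine obstacle; the only thing to be careful about is that minimality is invoked only in the reverse implication (it is needed to apply the dichotomy of Proposition~\ref{prop: mean_sens_inf_sep_num}), while the forward implication is unconditional. This asymmetry is worth highlighting, since it shows that mean equicontinuity always forces finite separation numbers, whereas the reverse requires the rigidity provided by minimality.
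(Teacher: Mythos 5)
Your proof is correct and follows exactly the route the paper takes: the paper derives this theorem as an immediate corollary of Proposition~\ref{prop: mean_equicont_finite_sep_num} (for the forward direction) and the dichotomy of Proposition~\ref{prop: mean_sens_inf_sep_num} (for the converse). Your additional remark that minimality is only needed for the converse is accurate and consistent with the paper's presentation.
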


Let us briefly come back to the discussion at the end of the previous section.
If $f$ is invertible, we may consider the pseudometric
\[
	D_B'(x,y)=
		\varlimsup\limits_{n\to\infty}\frac{1}{2n-1}\sum\limits_{k=n-1}^{n-1}d(f^k(x),f^k(y))
		\qquad (x,y\in X),
\]
instead of $D_B$.
However, the results of \cite{DownarowiczGlasner2016} and \cite{QiuZhao2018,FuhrmannGroegerLenz2018}
(the former treats the minimal case and the latter treat the general case)
yield that $D_B$ is continuous if an only if $D_B'$ is continuous. 
We may hence stick with $D_B$ in the following.


\section{Discrete spectrum and mean equicontinuity}\label{sec: discrete spectrum}

Suppose $(X,f)$ is a dynamical system and let $\mu$ be an $f$-invariant measure.
We call $\lambda\in \C$ an \emph{eigenvalue} and $\varphi \in L_2(X,\mu)$
a corresponding \emph{eigenfunction} of $(X,f)$ if $\varphi\circ f=\lambda\cdot\varphi$.
Further, $(X,f)$ is said to have \emph{(purely) discrete spectrum} with respect to $\mu$ if
there is an orthonormal basis for $L_2(X,\mu)$ consisting of eigenfunctions.

\begin{thm}[{\cite[Corollary~6.3]{FuhrmannGroegerLenz2018}}]
	Assume $(X,f)$ is an invertible minimal dynamical system.
	Then $(X,f)$ is mean equicontinuous if and only if it is uniquely ergodic,
	has discrete spectrum and each eigenvalue possesses a continuous eigenfunction.
\end{thm}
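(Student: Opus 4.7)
The plan is to prove both implications by pivoting on the fact that, for a minimal system, mean equicontinuity is equivalent to the factor map $\pi\colon(X,f)\to(Y,g)$ onto the maximal equicontinuous factor being a measure-theoretic isomorphism with respect to the invariant measures.

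\emph{Forward direction.} Suppose $(X,f)$ is mean equicontinuous. I would first derive unique ergodicity from the Lipschitz approximation
\[
	\Bigl|\frac{1}{n}\sum_{k=0}^{n-1}\varphi(f^k(x))-\frac{1}{n}\sum_{k=0}^{n-1}\varphi(f^k(y))\Bigr|\leq \mathrm{Lip}(\varphi)\cdot\frac{1}{n}\sum_{k=0}^{n-1}d(f^k(x),f^k(y)),
\]
which, together with continuity of $D_B$, shows that Birkhoff averages of Lipschitz functions form an equicontinuous family; minimality then forces uniform convergence to a constant, yielding unique ergodicity by Oxtoby's criterion. Let $\mu$ denote the unique invariant measure. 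Minimality descends to $(Y,g)$, so by Halmos--von Neumann $(Y,g)$ is a rotation on a compact abelian group with Haar measure $\nu=\pi_*\mu$. The decisive step is to show that $\pi$ is a measure-isomorphism: mean equicontinuity forces $D_B(x,y)=0$ whenever $x,y$ lie in a common fiber of $\pi$ (such pairs are regionally proximal), while an application of Birkhoff's ergodic theorem to the continuous function $d$ on $X\times X$ under an appropriate self-joining of $\mu$ yields $D_B(x,y)>0$ for $\mu\times\mu$-a.e.\ pair $(x,y)$ with $x\neq y$, so that $\pi$-fibers are $\mu$-essentially singletons. Discrete spectrum with continuous eigenfunctions then follows at once: the characters of $Y$ are continuous eigenfunctions of $(Y,g)$ forming an orthonormal basis of $L^2(\nu)$, and their pullbacks under $\pi$ inherit both properties, furnishing such a basis for $L^2(\mu)$.

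\emph{Reverse direction.} Suppose $(X,f)$ is uniquely ergodic with discrete spectrum and continuous eigenfunctions. Let $\mathcal E\subseteq C(X)$ denote the set of continuous eigenfunctions; since the eigenvalues form a subgroup of the unit circle, $\mathcal E$ is closed under products and complex conjugation, and the closed unital $\ast$-subalgebra $\mathcal A\subseteq C(X)$ generated by $\mathcal E$ is $f$-invariant. By Gelfand duality, $\mathcal A=C(Y)\circ\pi$ for a topological factor $\pi\colon(X,f)\to(Y,g)$, and the group structure inherited from $\mathcal E$ turns $Y$ into a compact abelian group on which $g$ acts by rotation, making $(Y,g)$ equicontinuous. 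Discrete spectrum says $\mathcal E$ spans a dense subspace of $L^2(\mu)$, so $C(Y)\circ\pi$ is dense in $L^2(\mu)$ and $\pi$ is a measure-isomorphism onto $(Y,g,\pi_*\mu)$. Mean equicontinuity now follows from the isomorphism by writing, for $\mu\times\mu$-a.e.\ pair $(x,y)$,
\[
	D_B(x,y)=\int_X d\bigl(z,\Phi_{\pi(y)-\pi(x)}(z)\bigr)\,d\mu(z),
\]
where $\Phi_t$ denotes the $\pi$-pullback of rotation by $t\in Y$. The right-hand side is continuous in $\pi(y)-\pi(x)$, and combining this with continuity of $\pi$ and a minimality-based extension, one obtains continuity of $D_B$ on all of $X\times X$.

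\emph{Main obstacle.} The hardest point in both directions is bridging the \emph{topological} notion of mean equicontinuity and the \emph{measure-theoretic} identification of $(X,f,\mu)$ with its Kronecker factor. In the forward direction this requires a joining argument on $X\times X$ to exclude proximality-type degeneracies outside $\pi$-fibers; in the reverse direction it rests on extending the integral formula for $D_B$ from a full-measure set to every pair, which ultimately relies on joint genericity of arbitrary pairs under the diagonal action $f\times f$ with respect to a measure determined by the rotational parameter $\pi(y)-\pi(x)\in Y$.
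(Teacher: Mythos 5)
First, a structural remark: the paper does not prove this statement at all --- it is imported verbatim from \cite{FuhrmannGroegerLenz2018}, so there is no internal proof to compare against. Your overall architecture (reduce both implications to the statement that the factor map $\pi$ onto the maximal equicontinuous factor is a measure-theoretic isomorphism) is indeed the route taken in the literature (\cite{DownarowiczGlasner2016}, \cite{FuhrmannGroegerLenz2018}), and your derivation of unique ergodicity from the Lipschitz estimate on Birkhoff averages is sound. However, the two steps you yourself identify as decisive do not go through as written.

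In the forward direction you work with the product self-joining: from ``$D_B(x,y)>0$ for $\mu\times\mu$-a.e.\ pair with $x\neq y$'' together with ``$D_B=0$ on fibers'' you conclude that $\pi$-fibers are $\mu$-essentially singletons. This does not follow: the two facts only give $\mu\times\mu\bigl(\{(x,y):\pi(x)=\pi(y)\}\setminus\Delta\bigr)=0$, i.e.\ that $\nu$-a.e.\ fiber is a $\mu$-null set, which is automatic whenever $\mu$ is non-atomic and says nothing about fibers being singletons. The correct object is the relatively independent self-joining $\mu\times_Y\mu=\int_Y\mu_z\times\mu_z\,d\nu(z)$: almost every pair for this joining lies in a common fiber and hence has $D_B=0$, whereas any ergodic component of $\mu\times_Y\mu$ not carried by the diagonal would produce generic pairs with $D_B(x,y)=\int d\,d\lambda'>0$; hence the disintegration is Dirac almost everywhere. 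In the reverse direction the gap is more serious. The lift $\Phi_t$ exists only $\mu$-almost everywhere (it is a measure-theoretic, not topological, conjugation), and your integral formula for $D_B(x,y)$ presupposes that the \emph{specific} pair $(x,y)$ is generic for the off-diagonal joining indexed by $\pi(y)-\pi(x)$. Unique ergodicity of $(X,f)$ does not pass to the relevant subsystems of $(X\times X,f\times f)$, so ``joint genericity of arbitrary pairs'' is precisely the assertion that must be proved --- it is the substance of the theorem, not a technicality --- and your sketch offers no mechanism for it. The known proofs avoid it altogether: using that the conditional measures are Dirac a.e., one covers $X$ up to small measure by compact sets on which $\pi$ is injective with a uniform modulus, and then bounds $\varlimsup_n\frac{1}{n}\sum_{k=0}^{n-1}d(f^k(x),f^k(y))$ for \emph{all} sufficiently close pairs by applying unique ergodicity to suitable continuous functions on $X$ itself. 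Without an argument of this kind the reverse implication is not established.
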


For subshifts associated to primitive substitutions, a classical result by Host
\cite{Host1986} (see \cite[Theorem 6.3 \& Remark 6.3(1)]{Queffelec2010}, too)
states that all eigenvalues possess a continuous eigenfunction.
Hence, the previous theorem immediately yields the following corollary, where
we make use of Theorem \ref{thm:equivalence_mean_equi_fin_sep_num}
and the fact that the subshift $(X_\sub,\sigma)$ associated to a primitive substitution
$\sub:A\to\nefw$ is always minimal and uniquely ergodic.

\begin{cor}\label{cor:discrete spectrum iff finite sep numbers}
	Let $\sub$ be a primitive substitution and $(X_{\sub},\sigma)$ the associated
	subshift.
	Then the following are equivalent.
	\begin{enumerate}[(i)]
		\item $(X_{\sub},\sigma)$ is mean equicontinuous.
		\item $(X_{\sub},\sigma)$ has discrete spectrum.
		\item $(X_{\sub},\sigma)$ has finite separation numbers.
	\end{enumerate}
\end{cor}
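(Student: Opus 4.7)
The plan is to derive the corollary as an immediate consequence of the three ingredients the authors have just assembled: the characterization of mean equicontinuity via discrete spectrum plus continuous eigenfunctions (the theorem preceding the corollary), Host's theorem that every eigenvalue of a primitive substitutive subshift admits a continuous eigenfunction, and Theorem \ref{thm:equivalence_mean_equi_fin_sep_num} identifying mean equicontinuity with finite separation numbers in the minimal setting. I would first record the standing structural facts about $(X_\sub,\sigma)$ for primitive $\sub$: it is invertible (as a subshift of the two-sided full shift), minimal, and uniquely ergodic. These are the hypotheses needed to invoke the two bridging theorems.

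For the equivalence (i)$\Leftrightarrow$(iii), I would apply Theorem \ref{thm:equivalence_mean_equi_fin_sep_num} directly, using only that $(X_\sub,\sigma)$ is minimal; this part is essentially instantaneous. For (i)$\Leftrightarrow$(ii), I would argue in two directions. The implication (i)$\Rightarrow$(ii) is immediate from the theorem preceding the corollary, which says mean equicontinuity implies discrete spectrum. For (ii)$\Rightarrow$(i), I would combine that same theorem with the two supplementary inputs: unique ergodicity of $(X_\sub,\sigma)$ (so only one invariant measure is in play, and the discrete spectrum assumption is unambiguous) and Host's theorem, which supplies the continuous eigenfunctions that would otherwise be a missing hypothesis. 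Together these verify all three conditions in the theorem preceding the corollary, yielding mean equicontinuity.

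There is no real obstacle; the entire content of the corollary is bookkeeping, matching the hypotheses of the already-stated results to the substitutive setting. The only place one needs to be slightly careful is in checking that the theorem from \cite{FuhrmannGroegerLenz2018} requires invertibility (which is satisfied since we work with two-sided shifts) and unique ergodicity (which holds by \cite{Klein1972} for any primitive substitution). Once those hypotheses are verified, the biconditionals follow by a direct chain of implications, and it suffices to close by noting the circular structure (i)$\Rightarrow$(ii)$\Rightarrow$(i)$\Leftrightarrow$(iii).
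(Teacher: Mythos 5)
Your proposal is correct and follows exactly the paper's route: the equivalence (i)$\Leftrightarrow$(ii) comes from the theorem of \cite{FuhrmannGroegerLenz2018} combined with Host's theorem and the minimality and unique ergodicity of $(X_{\sub},\sigma)$, while (i)$\Leftrightarrow$(iii) is Theorem~\ref{thm:equivalence_mean_equi_fin_sep_num} applied in the minimal setting. Nothing is missing.
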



\section{Box dimension and amorphic complexity}\label{sec:Besicovitch spaces and box dimension}

In this section, we establish a connection between the amorphic complexity of a
subshift and the box dimension of an associated subset in the so-called Besicovitch space.

We start by briefly introducing the box dimension of a totally bounded subset $E$
of a general metric space $(M,\rho)$.
We call a subset $S$ of $M$ \emph{$\eps$-separated}
if for all $s\neq s'\in S$ we have $\rho(s,s')\geq\eps$ and denote
by $M_\eps(E)$ the maximal cardinality of an $\eps$-separated subset of $E$.
Then the \emph{lower} and \emph{upper box dimension} of $E$ are defined as
\begin{align}\label{eq: definition box dimension}
	\underline\Dim_B(E)=\varliminf\limits_{\eps\to 0}
		\frac{\log M_\eps(E)}{-\log\eps}
	\qquad\textnormal{ and }\qquad
	\overline\Dim_B(E)=\varlimsup\limits_{\eps\to 0}
		\frac{\log M_\eps(E)}{-\log\eps}.
\end{align}
In case that $\underline\Dim_B(E)$ and $\overline\Dim_B(E)$ coincide, their common
value, denoted by $\Dim_B(E)$, is called the \emph{box dimension}\footnote{
Let us remark that the standard definition of the box dimension involves the smallest
number of sets with diameter strictly smaller than $\eps$ needed to cover $E$. 
It is well known that the above definitions do not change if in
\eqref{eq: definition box dimension}, $M_\eps(E)$ is replaced by this number
(see also, for example, Proposition~1.4.6 in \cite{Edgar1998}).} of $E$.
We will make use of the following basic facts.

\begin{thm}\label{thm: properties box dimension}\mbox{}
	\begin{itemize}
		\item[(i)] The lower and upper box dimension are invariant with respect to
			Lip\-schitz-con\-tinuous homeomorphisms with a Lip\-schitz-continuous inverse.
		\item[(ii)] Given a metric space $(M,\rho)$, a totally bounded subset
			$E\ssq M$ and a Lipschitz-continuous map  $h:E\to M$.
			Then we have
			\[
				\underline\Dim_B(E)=\underline\Dim_B\left(\bigcup_{i=0}^{n-1} h^i(E)\right)
				\leq\overline\Dim_B\left(\bigcup_{i=0}^{n-1}h^i(E)\right)
				=\overline\Dim_B(E).
			\]
	\end{itemize}
\end{thm}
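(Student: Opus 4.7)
The plan is to reduce both parts to the following elementary Lipschitz lemma, which I would prove first: if $\phi\colon E\to M$ is Lipschitz with constant $L>0$, then $M_\eps(\phi(E))\leq M_{\eps/L}(E)$ for every $\eps>0$. Indeed, given an $\eps$-separated set $\{y_1,\ldots,y_k\}\ssq \phi(E)$, picking preimages $x_i\in\phi^{-1}(y_i)$ yields $\rho(y_i,y_j)\leq L\rho(x_i,x_j)$, so the $x_i$ form an $\eps/L$-separated set in $E$. The recurring algebraic input will be that
\[
	\frac{-\log(\eps/c)}{-\log\eps}=1+\frac{\log c}{-\log\eps}\xrightarrow{\eps\to 0} 1
\]
for any $c>0$, so multiplicative rescalings of the scale wash out when taking $\varliminf$ or $\varlimsup$.

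For part (i), I would let $\phi\colon E\to F$ be a bi-Lipschitz homeomorphism and take $L\geq 1$ to be a common Lipschitz constant for $\phi$ and $\phi^{-1}$. Applying the Lipschitz lemma in each direction gives the sandwich $M_{L\eps}(E)\leq M_\eps(F)\leq M_{\eps/L}(E)$. Plugging this into \eqref{eq: definition box dimension} and invoking the displayed identity, both $\underline\Dim_B$ and $\overline\Dim_B$ are preserved.

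For part (ii), I would set $U=\bigcup_{i=0}^{n-1}h^i(E)$ and let $L\geq 1$ be a Lipschitz constant for $h$, so that $h^i$ is Lipschitz with constant at most $L^{n-1}$ for $0\leq i\leq n-1$. The inclusion $E\ssq U$ (the $i=0$ summand) immediately yields $\underline\Dim_B(E)\leq\underline\Dim_B(U)$ and $\overline\Dim_B(E)\leq\overline\Dim_B(U)$. For the reverse inequalities, I would combine the Lipschitz lemma with the trivial subadditivity $M_\eps(A\cup B)\leq M_\eps(A)+M_\eps(B)$ to obtain
\[
	M_\eps(U)\leq\sum_{i=0}^{n-1}M_\eps(h^i(E))\leq n\cdot M_{\eps/L^{n-1}}(E).
\]
Taking logarithms, dividing by $-\log\eps$, and passing to the $\varliminf$ or $\varlimsup$ as $\eps\to 0$ kills the additive $\log n$ term and absorbs the scaling factor $L^{n-1}$, giving the reverse inequalities and thus both equalities.

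No serious obstacle arises; the entire argument is bookkeeping around the Lipschitz lemma. The only point requiring minor care is that the shift in scale by $L^{n-1}$ genuinely disappears in the $\varliminf$ as well as in the $\varlimsup$, which is precisely what the displayed identity guarantees.
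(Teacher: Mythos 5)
Your proposal is correct and follows essentially the same route as the paper's proof: the same pull-back of $\eps$-separated sets through Lipschitz maps (the paper writes it as $M_{L\eps}(h^i(E))\leq M_\eps(E)$), the same subadditivity of $M_\eps$ over the finite union, and the same observation that the resulting multiplicative rescaling and additive $\log n$ disappear in the limit. The only difference is that the paper carries this out explicitly just for the first equality in (ii) and declares the rest analogous, whereas you spell out all cases from the single Lipschitz lemma.
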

\begin{proof}
	We only show the first equality in part (ii).
	The second equality in (ii) as well as part (i) can be shown analogously, see
	also \cite[Theorem 6.3 \& Appendix I]{Pesin1997}.

	It is straightforward to see that
	\begin{align*}
		\max_{0\leq i<n}\underline\Dim_B(E_i)\leq\underline\Dim_B\left(\bigcup_{i=0}^{n-1} E_i\right)
	\end{align*}
	for totally bounded $E_0,E_1,\ldots, E_{n-1} \ssq M$ (see also
	\cite[Theorem 6.2  \& Appendix I]{Pesin1997}).
	It hence suffices to show 
	$\underline\Dim_B\left(\bigcup_{i=0}^{n-1} h^i(E)\right)\leq\underline\Dim_B(E).$
	To that end, let $L>1$ be a mutual Lipschitz constant of the maps
	$h,\ldots, h^{n-1}$.
	We clearly have $M_{L\eps}(h^i(E))\leq M_{\eps}(E)$ for all $\eps>0$ and each
	$i\in\{0,\ldots,n-1\}$.
	Hence,
	\[
		\underline\Dim_B\left(\bigcup_{i=0}^{n-1} h^i(E)\right)\leq 
		\varliminf\limits_{\eps\to 0} \frac{\log \sum_{i=0}^{n-1} M_{L\eps}(h^i(E))}{-\log L\eps}
		\leq\varliminf\limits_{\eps\to 0} \frac{\log \left(n\cdot M_{\eps}(E)\right) }{-\log L\eps}=
		\underline\Dim_B(E). \qedhere
	\]
\end{proof}

The first part of the above statement has an immediate but important corollary:
if $\rho$ and $\rho'$ are \emph{Lipschitz-equivalent} metrics on $M$, that is, if there
are $c,C>0$ such that
\[
	c\cdot\rho(x,y)\leq \rho'(x,y)\leq C\cdot\rho(x,y) \qquad (x,y\in M),
\]
then the box dimension of any totally bounded subset $E$ of $M$ is independent of whether we 
compute $M_\eps(E)$ with respect to $\rho$ or $\rho'$.

Next, we aim at introducing the Besicovitch space associated to the full shift $(\Sigma,\sigma)$.
Recall that the definition of the quantity $D_\delta$ in \eqref{eq: def_D_deltas}
implicitly depends on the particular metric $d$ we put on $\Sigma$.
For a plethora of metrics, it turns out that $D_\delta$ is in fact a
pseudometric and moreover, Lipschitz-equivalent to any other such pseudometric
obtained from \eqref{eq: def_D_deltas}.
For the next statement and its proof, we write $D^d_\delta$ to stress the
dependence on $d$ explicitly.
Given a metric $d$ on $\Sigma$, let $\delta_0^d>0$ be such that $d(x,y)\geq \delta_0^d$
whenever $x_0\neq y_0$ where $x=(x_k)_{k\in\Z}$ and $y=(y_k)_{k\in\Z}$ are from $\Sigma$.

\begin{lem}\label{prop: euiv family metrics}
	Suppose $d$ is an ultrametric which induces the product topology on $\Sigma$
	and let $\delta>0$.
	Then $D_\delta^d$ is a pseudometric on $\Sigma$.
	
	Further, assume $d$ and $d'$ are two metrics which induce the product topology
	on $\Sigma$.
	Let $\delta\in(0,\delta_0^d]$ and $\delta'\in(0,\delta_0^{d'}]$.
	Then there are $c,C>0$ such that
	\begin{align}\label{eq: defn lipschitz equivalent pseudometrics}
		c\cdot D^d_{\delta}(x,y)\leq D^{d'}_{\delta'}(x,y)\leq C\cdot D^d_{\delta}(x,y)
		\qquad (x,y\in \Sigma).
	\end{align}
\end{lem}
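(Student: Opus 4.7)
The plan is to verify the pseudometric axioms for $D^d_\delta$ directly, and then to establish the Lipschitz equivalence by sandwiching both $D^d_\delta$ and $D^{d'}_{\delta'}$ between constant multiples of the \emph{coordinate-disagreement density}
\[
	\udens^\ast(x,y)\coloneqq\udens\left(\{k\in\N_0 : x_k\neq y_k\}\right).
\]
This single auxiliary quantity will tie the two metric-dependent notions together.

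For the pseudometric assertion, non-negativity, symmetry, and $D^d_\delta(x,x)=0$ follow immediately from the definition of $\udens$. The only nontrivial axiom is the triangle inequality, and this is precisely where the ultrametric hypothesis enters. Applying $d(u,w)\leq\max\{d(u,v),d(v,w)\}$ with $u=\sigma^k x$, $v=\sigma^k y$, $w=\sigma^k z$, one obtains the set inclusion
\[
	\{k : d(\sigma^k x,\sigma^k z)\geq\delta\}\subseteq\{k : d(\sigma^k x,\sigma^k y)\geq\delta\}\cup\{k : d(\sigma^k y,\sigma^k z)\geq\delta\},
\]
and combining this with the elementary subadditivity $\udens(A\cup B)\leq\udens(A)+\udens(B)$ yields $D^d_\delta(x,z)\leq D^d_\delta(x,y)+D^d_\delta(y,z)$.

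For the Lipschitz-equivalence part, I would first observe that for any metric $d$ inducing the product topology and any $\delta\in(0,\delta_0^d]$, the inequality $\udens^\ast(x,y)\leq D^d_\delta(x,y)$ is nearly tautological: whenever $x_k\neq y_k$, by the very definition of $\delta_0^d$ we have $d(\sigma^k x,\sigma^k y)\geq\delta_0^d\geq\delta$. For the reverse bound I would use that $d$ induces the product topology: continuity of $d$ at the diagonal produces an integer $N=N(d,\delta)$ such that $x_i=y_i$ for all $|i|\leq N$ forces $d(x,y)<\delta$. Contrapositively, $d(\sigma^k x,\sigma^k y)\geq\delta$ implies $x_{k+i}\neq y_{k+i}$ for some $i\in\{-N,\dots,N\}$, whence
\[
	\{k : d(\sigma^k x,\sigma^k y)\geq\delta\}\subseteq\bigcup_{i=-N}^{N}\{k\in\N_0 : x_{k+i}\neq y_{k+i}\}.
\]
Translation invariance of $\udens$ under finite shifts together with subadditivity then yields $D^d_\delta(x,y)\leq (2N+1)\cdot\udens^\ast(x,y)$. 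Applying both bounds with $d$ and with $d'$ immediately produces \eqref{eq: defn lipschitz equivalent pseudometrics}.

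The main subtle point is the reverse bound in this sandwich: one has to convert the continuity of $d'$ into a uniform finite-window coordinate condition, and then carefully absorb the resulting index shifts into $\udens$ via translation invariance. Once this step is secured, everything else reduces to elementary manipulations of upper density; the ultrametric property is used only for the first assertion, while the second assertion requires no more of the metrics than that they generate the product topology.
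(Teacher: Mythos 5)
Your proof is correct and follows essentially the same route as the paper: the triangle inequality comes from the ultrametric inequality plus subadditivity of upper density, and the Lipschitz equivalence comes from the finite-window consequence of the product topology together with the defining property of $\delta_0^d$. The only difference is organizational --- you sandwich both quantities symmetrically around the coordinate-disagreement density $\udens^\ast$, whereas the paper first compares two thresholds for a fixed metric (reducing to $\delta_0^d$) and then chains the two metrics together through the topology comparison; the underlying estimates are identical.
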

\begin{proof}
	For the first part, we restrict to proving the triangle inequality since the other
	properties are trivial.
	Observe that for arbitrary $x,y,z\in\Sigma$ we have
	\begin{align*}
		&\#\left\{0\leq k\leq n-1 \colon d\left(\sigma^k(x),\sigma^k(y)\right)\geq\delta \right\}\\
		&\leq 
		\#\left\{0\leq k\leq n-1 \colon d\left(\sigma^k(x),\sigma^k(z)\right)\geq\delta
			\textnormal{ or } d\left(\sigma^k(z),\sigma^k(y)\right)\geq\delta \right\}\\
		&\leq
		\#\left\{0\leq k\leq n-1 \colon d\left(\sigma^k(x),\sigma^k(z)\right)\geq\delta \right\}
			+\#\left\{0\leq k\leq n-1 \colon d\left(\sigma^k(z),\sigma^k(y)\right)\geq\delta \right\},
	\end{align*}
	using the ultrametric inequality in the first step.
	This yields that $D_\delta$ is a pseudometric.
	
	For the second part,  we first prove that for all $\eta,\eta' \in (0,\delta_0^d]$
	there are $c,C>0$ such that
	\begin{align}\label{eq: lipschitz equivalent pseudometrics}
		c\cdot D^d_{\eta'}(x,y)\leq D^d_{\eta}(x,y)\leq C\cdot D^d_{\eta'}(x,y)
		\qquad (x,y\in \Sigma)
	\end{align}
	where we consider $D^d_{\eta'}$ and $D^d_{\eta}$ as defined in \eqref{eq: def_D_deltas}
	regardless of whether this yields pseudometrics on $\Sigma$ or not.
	Without loss of generality, we may assume $\eta'=\delta_0^d$.
	
	Note that since $d$ induces the product topology on $\Sigma$, there exists
	$m_\eta\in \N$ such that $d(x,y)\geq\eta$ yields the existence of
	$\ell\in\{-m_\eta,\ldots,m_\eta\}$ with $x_\ell\neq y_\ell$.
	Hence,
	\begin{align*}
		&\#\left\{0\leq k\leq n-1 : d\left(\sigma^k(x),\sigma^k(y)\right)
			\geq\eta \right\}
		\leq\#\left\{0\leq k\leq n-1 : x_{[k-m_\eta,k+m_\eta]}
			\neq y_{[k-m_\eta,k+m_\eta]} \right\}\\
		&\leq(2m_\eta+1)\cdot \#\left\{-m_\eta \leq k\leq n-1+m_\eta :
			x_{k}\neq y_{k} \right\}\\
		&\leq (2m_\eta+1)\cdot \#\left\{-m_\eta \leq k\leq n-1+m_\eta :
			d\left(\sigma^k(x),\sigma^k(y)\right)\geq\delta_0^d \right\}.
	\end{align*}
	Therefore, for all $x,y\in \Sigma$ we have the right-hand side of
	\eqref{eq: lipschitz equivalent pseudometrics} with $C=2m_\eta+1$.
	Further, we trivially have $D_{\delta_0^d}^d(x,y)\leq D_{\eta}^d(x,y)$ and hence 
	the left-hand side of \eqref{eq: lipschitz equivalent pseudometrics} with $c=1$.
	
	Finally, let $d$, $d'$, $\delta$ and $\delta'$ be as in the statement.
	Since $d$ and $d'$ induce the same topology, there is $\eta\in(0,\delta_0^{d}]$
	such that $d'(x,y)\geq \delta'$ implies $d(x,y)\geq\eta$.
	With such $\eta$, we have for all $x,y\in\Sigma$
	\[
		D_{\delta'}^{d'}(x,y)\leq D_{\eta}^{d}(x,y)\leq C\cdot D_{\delta}^{d}(x,y),
	\]
	for some $C>0$ which exists due to \eqref{eq: lipschitz equivalent pseudometrics}.
	By a similar argument, we obtain $D_{\delta}^{d}(x,y)\leq 1/c\cdot D_{\delta'}^{d'}(x,y)$
	for some $c>0$.
	This finishes the proof.
\end{proof}

Following a standard procedure, we introduce an equivalence relation on $\Sigma$
by identifying $x,y\in\Sigma$ whenever $D_{\delta}(x,y)=0$ and denote by
$[\cdot]$ the corresponding quotient mapping.
Note that Lemma~\ref{prop: euiv family metrics} ensures that this relation is
actually an equivalence relation and that it is well defined, i.e., independent
of the particular metric $d$ on $\Sigma$ and independent of $\delta\in(0,\delta_0^d]$.
Sometimes, we may consider subshifts with different alphabets.
In those cases, we may use the notation $[\cdot]_{A}$ to
unambiguously specify the corresponding quotient mapping (recall that $A$ denotes
the alphabet of $\Sigma$, see Section \ref{sec: preliminaries}).

For the rest of this work, we may assume without loss of generality that $d$ is
a Cantor metric (so that $\delta_0^d=1$).
Since $d$ is hence an ultrametric, $D_\delta$ is a pseudometric on $\Sigma$ for each
$\delta\in (0,1]$, which we may further consider a metric on $[\Sigma]$ in the canonical way.
We call $([\Sigma], D_\delta)$ the \emph{Besicovitch space}.

Since most of the analysis carried out in this work takes place in the space
$([\Sigma], D_\delta)$,  we provide the next statement to familiarize the reader
with its topological and metric properties.
Its proof is a straightforward application of the results in \cite{BlanchardFormentiKurka1997}.

\begin{thm}
	The space $([\Sigma], D_{\delta})$ is complete and pathwise connected.
	Moreover, it is topologically infinite dimensional and neither locally compact
	nor separable.
\end{thm}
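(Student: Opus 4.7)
The plan is to reduce to the results of \cite{BlanchardFormentiKurka1997}, which concern one-sided full shifts, by exploiting Lemma~\ref{prop: euiv family metrics}. Since all $D_\delta^d$ with $d$ an ultrametric inducing the product topology and $\delta\in(0,1]$ are Lipschitz-equivalent, we may fix a convenient Cantor metric (say with $\beta=2$) and take $\delta=1$, for which $D_1(x,y)$ is simply the upper density in $\N_0$ of the set of indices $k$ with $x_k\neq y_k$. All four claims (completeness, path-connectedness, infinite topological dimension, failure of local compactness and separability) are invariant under Lipschitz-equivalence of the pseudometric, so it suffices to establish them for this distinguished $D_1$.

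For \textbf{completeness}, I would take a Cauchy sequence $([x^{(n)}])$, extract a subsequence with $D_1(x^{(n)},x^{(n+1)})<2^{-n}$, and build a limit $\hat{x}\in\Sigma$ by a standard gluing. Choose an increasing sequence $N_n\in\N$ so that the density of indices $k\in[0,N]$ where $x_k^{(n)}\neq x_k^{(n+1)}$ is within $2^{-n}$ of $D_1(x^{(n)},x^{(n+1)})$ for all $N\geq N_n$; then set $\hat{x}_k=x^{(n)}_k$ for $k$ in the window $[N_{n-1},N_n)$ (and likewise for negative indices). A Borel--Cantelli style estimate using $\sum 2^{-n}<\infty$ shows $D_1(x^{(n)},\hat{x})\to 0$. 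For \textbf{path-connectedness}, given $[x],[y]$, I would choose a family of sets $(E_t)_{t\in[0,1]}$ in $\Z$ depending continuously on $t$ in the symmetric-difference-density pseudometric, with $E_0=\emptyset$, $E_1=\Z$, and $\udens(E_t)=t$. A concrete choice is $E_t=\{k\in\Z:\{k\alpha\}<t\}$ for some irrational $\alpha$ (via Weyl equidistribution). Defining $\gamma(t)_k=y_k$ if $k\in E_t$ and $x_k$ otherwise yields
\[
  D_1(\gamma(s),\gamma(t))\leq\udens(E_s\triangle E_t)\leq|t-s|,
\]
so $t\mapsto[\gamma(t)]$ is a continuous path from $[x]$ to $[y]$.

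The remaining three properties all follow from a single construction: partition $\Z$ into countably many disjoint subsets $A_1,A_2,\ldots$ each of density $2^{-i}$ (e.g.\ via a dyadic decomposition of arithmetic progressions). Fix two distinct symbols $a,b\in A$ and, for each binary sequence $\omega\in\{0,1\}^\N$, define $z^\omega\in\Sigma$ by putting $a$ on $A_i$ if $\omega_i=0$ and $b$ on $A_i$ if $\omega_i=1$. Then $D_1(z^\omega,z^{\omega'})=\sum_{i:\omega_i\neq\omega'_i}2^{-i}$, giving a bi-Lipschitz embedding of the Cantor space $\{0,1\}^\N$ (with the standard metric) into $([\Sigma],D_1)$. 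Restricting to sequences that agree with a prescribed $\omega_0$ up to index $n$ and vary on $[n+1,n+k]$ furnishes a bi-Lipschitz embedding of $\{0,1\}^k$ into an arbitrarily small ball for every $k$, proving \textbf{infinite topological dimension} (via the embedding of cubes $[0,1]^k$) and ruling out \textbf{local compactness} (since every ball contains $2^k$-point $\eps$-separated sets for $\eps\sim 2^{-n-k}$). The uncountable bi-Lipschitz copy of $\{0,1\}^\N$ also shows non-\textbf{separability}, as it contains an uncountable $1/2$-separated subset.

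The routine but delicate step is the path-connectedness construction: one must ensure that the family $(E_t)$ not only has the correct density but varies continuously in the symmetric-difference-density pseudometric, which is where the Weyl equidistribution input enters. Once this is in place, the remaining items reduce to elementary estimates on the dyadic embedding, which is essentially verbatim the argument of \cite{BlanchardFormentiKurka1997} transposed from $\N_0$ to $\Z$.
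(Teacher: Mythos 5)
Your arguments for completeness and path-connectedness are essentially sound (the gluing-along-windows argument and the Weyl-equidistribution interpolation both work, modulo the routine care you acknowledge). The paper itself takes a different and much shorter route for the whole theorem: it invokes \cite[Propositions 1--3]{BlanchardFormentiKurka1997} for the two-sided Besicovitch space $([\Sigma]',D_1')$ and transfers all five properties to $([\Sigma],D_1)$ through the isometric surjection obtained by interleaving the negative and positive coordinates. That difference would be fine on its own, but your treatment of the last three properties has a genuine gap.

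The dyadic construction with $\udens(A_i)=2^{-i}$ cannot witness any of infinite topological dimension, failure of local compactness, or non-separability. As you compute, $D_1(z^\omega,z^{\omega'})=\sum_{i:\omega_i\neq\omega'_i}2^{-i}$, and this metric on $\{0,1\}^\N$ is bi-Lipschitz to the standard Cantor metric $2^{-\min\{i:\omega_i\neq\omega'_i\}}$; hence the image is \emph{compact}. A compact (indeed any totally bounded) set contains no infinite $\eps$-separated subset for any fixed $\eps>0$, so the claimed uncountable $1/2$-separated subset does not exist and non-separability does not follow. Likewise, exhibiting $2^k$-point $\eps$-separated sets with $\eps\sim 2^{-n-k}\to 0$ is consistent with total boundedness of the ball (the interval $[0,1]$ does the same), so local compactness is not ruled out. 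And the restricted construction embeds only the finite discrete space $\{0,1\}^k$, which is zero-dimensional; no cube $[0,1]^k$ is actually embedded, so nothing is proved about topological dimension. The repair is to use modification sets of \emph{uniformly positive} density rather than geometrically decaying ones: e.g.\ an uncountable family of subsets of $\N_0$ with pairwise symmetric differences of density $1/2$ gives non-separability and non-local-compactness (after rescaling into a small ball), and running your interpolating families $E_{t_1},\ldots,E_{t_k}$ on $k$ disjoint positive-density pieces of $\N_0$ embeds $[0,1]^k$ bi-Lipschitz for every $k$. These are precisely the constructions carried out in \cite{BlanchardFormentiKurka1997}, which is why the paper simply cites that reference and transports the result by an isometry.
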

\begin{proof}
	First, recall that it suffices to show the statement for $([\Sigma], D_1)$,
	see Lemma \ref{prop: euiv family metrics}.
	Analogously to the definition of $D_1$, we may consider
	\[
		D_{1}'(x,y)=\limsup_{n\to \infty}\frac{\#\left\{n-1\leq k\leq n-1 :
			x_k\neq y_k \right\}}{2n-1} \qquad (x,y\in\Sigma).
	\]
	Similarly as in Lemma~\ref{prop: euiv family metrics}, we see that $D_1'$
	defines a pseudometric (see also \cite{BlanchardFormentiKurka1997}) and hence
	an equivalence relation by identifying points $x,y\in\Sigma$ whenever $D_1'(x,y)=0$.
	Let us denote the corresponding quotient mapping by $[\cdot]'$ and, as before,
	consider $D'_1$ a metric on $[\Sigma]'$ in the canonical way.
	
	Now, \cite[Proposition~1--3]{BlanchardFormentiKurka1997} state that
	$([\Sigma]', D'_{1})$ is complete, pathwise connected, topologically infinite
	dimensional and neither locally compact nor separable.
	We simply carry over these properties to $([\Sigma], D_{1})$
	by means of the map
	\begin{align*}
		\phi : [\Sigma]'\to [\Sigma] &: [(\ldots,x_{-2},x_{-1},x_0,x_1,x_2,\ldots)]'\mapsto
			[(\ldots,x_{-2},x_{-1},x_0,x_{-1},x_1,x_{-2},x_2,\ldots)].
	\end{align*}
	Note that $\phi$ is isometric and surjective.
	The statement follows.
\end{proof}

Now, observe that, given a subshift $(X,\sigma)$, we have
$D_\delta(x,y)=\udens(\dsep(\sigmaX,\delta,x,y))$ for all $\delta\in(0,1]$ and $x,y\in X$.
In other words,
\begin{align*}
	\Sep(\sigmaX,\delta,\nu)=\sup\left\{\# Y : Y\subseteq X\text{ with }
		D_{\delta}(x,y)\geq\nu \text{ for all } x \neq y \text{ in } Y\right\}
\end{align*}
as stated in the introduction.
The proof of the next statement is hence straightforward.

\begin{prop}\label{prop: finite sep numbers iff projection totally bounded}
	Given a subshift $(X,\sigma)$ and $\delta\in (0,1]$.
	Then $\Sep(\sigmaX,\delta,\nu)$ is finite
	for all $\nu\in(0,1]$ if and only if $[X]$ is totally bounded in $([\Sigma],D_\delta)$.
\end{prop}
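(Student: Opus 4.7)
The plan is to reduce the statement to a standard metric fact by identifying $\Sep(\sigmaX,\delta,\nu)$ with $M_\nu([X])$, the maximum cardinality of a $\nu$-separated subset of $[X]$ in $([\Sigma], D_\delta)$. Unfolding the definitions from Section~\ref{sec:asymp sep numbers and ac} yields the identity $D_\delta(x,y) = \udens(\dsep(\sigmaX,\delta,x,y))$ for all $x,y \in X$, which is in fact noted in the paragraph preceding the proposition. Consequently, a subset $Y \subseteq X$ is $(\sigmaX,\delta,\nu)$-separated if and only if $D_\delta(x,y) \geq \nu$ for every pair of distinct points in $Y$.

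Next, I would argue that the quotient map $x \mapsto [x]$ sets up a cardinality-preserving correspondence between $(\sigmaX,\delta,\nu)$-separated subsets of $X$ and $\nu$-separated subsets of $[X]$ with respect to $D_\delta$. The key observation is that $D_\delta(x,y) \geq \nu > 0$ forces $[x] \neq [y]$, so any $(\sigmaX,\delta,\nu)$-separated $Y \subseteq X$ embeds injectively into $[X]$ as a $\nu$-separated set; conversely, any $\nu$-separated subset of $[X]$ lifts via a choice of representatives to a $(\sigmaX,\delta,\nu)$-separated subset of $X$ of equal cardinality. Hence $\Sep(\sigmaX,\delta,\nu) = M_\nu([X])$ for every $\nu \in (0,1]$.

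The proposition now follows from the standard metric characterization of total boundedness: a subset of a metric space fails to be totally bounded iff it contains an infinite $\nu$-separated set for some $\nu > 0$, which is precisely the condition that $M_\nu([X]) = \infty$ for some $\nu \in (0,1]$. There is no genuine obstacle here — once the quotient viewpoint is in place, the proof is essentially bookkeeping. If anything requires minor care, it is the verification that the quotient map is cardinality-preserving on separated sets, which is immediate once one notes that positive $D_\delta$-distance implies distinct equivalence classes by the very construction of $[\cdot]$.
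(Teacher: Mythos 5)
Your proof is correct and follows exactly the route the paper intends: the paper leaves the proof as ``straightforward'' after noting the identity $D_\delta(x,y)=\udens(\dsep(\sigmaX,\delta,x,y))$, and the identification $\Sep(\sigmaX,\delta,\nu)=M_\nu([X])$ you establish is precisely the one the paper itself invokes later in the proof of Theorem~\ref{thm: amorphic complexity and box dimension}. Nothing further is needed.
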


The following theorem shows that studying the amorphic complexity of a subshift
$\left(X,\sigma\right)$ amounts to studying the box dimension of $[X]$.

\begin{thm}\label{thm: amorphic complexity and box dimension}
	Suppose $(X,\sigma)$ is a subshift with finite separation numbers.
	Then the (lower and upper) box dimension of the subset $[X]$ of $[\Sigma]$
	(equipped with $D_\delta$) is independent of $\delta\in(0,1]$.
	Further,
	\begin{align*}
		\uac\big(\sigmaX\big)=\underline\dim_B\left(\left[X\right]\right)
		\quad\textnormal{and}\quad
		\oac\big(\sigmaX\big)=\overline\dim_B\left(\left[X\right]\right).
	\end{align*}
\end{thm}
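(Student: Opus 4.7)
The plan is to carry out two routine steps: show that the box dimensions of $[X]$ do not depend on $\delta \in (0,1]$, and then identify the asymptotic separation numbers with separation numbers of $[X]$ in $([\Sigma], D_\delta)$.

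For the first step, fix $\delta, \delta' \in (0,1]$. Since the Cantor metric satisfies $\delta_0^d = 1$, Lemma~\ref{prop: euiv family metrics} provides constants $c, C > 0$ with
\begin{equation*}
c \cdot D_\delta(x,y) \leq D_{\delta'}(x,y) \leq C \cdot D_\delta(x,y) \qquad (x,y \in \Sigma),
\end{equation*}
and this inequality descends to the quotient $[\Sigma]$. Hence the identity is a bi-Lipschitz homeomorphism between $([\Sigma], D_\delta)$ and $([\Sigma], D_{\delta'})$, so Theorem~\ref{thm: properties box dimension}(i) yields that $\underline{\dim}_B([X])$ and $\overline{\dim}_B([X])$ do not depend on $\delta \in (0,1]$.

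For the second step, the key observation is $\Sep(\sigmaX, \delta, \nu) = M_\nu([X])$, where $M_\nu([X])$ denotes the maximal cardinality of a $\nu$-separated subset of $[X]$ with respect to $D_\delta$. Indeed, if $Y \ssq X$ satisfies $D_\delta(x, y) \geq \nu > 0$ for all distinct $x, y \in Y$, then distinct elements of $Y$ lie in distinct $D_\delta$-equivalence classes, so $[Y] \ssq [X]$ has cardinality $|Y|$ and is $\nu$-separated in $D_\delta$; conversely, any $\nu$-separated subset of $[X]$ lifts, by choosing representatives, to a subset $Y \ssq X$ of the same cardinality with the same separation property.

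Combining: for every $\delta \in (0, 1]$, substituting the identity above into the definition of lower box dimension gives
\begin{equation*}
\varliminf_{\nu \to 0} \frac{\log \Sep(\sigmaX, \delta, \nu)}{-\log \nu} = \underline{\dim}_B([X], D_\delta) = \underline{\dim}_B([X]),
\end{equation*}
using the independence from the first step, and analogously with $\varlimsup$. For $\delta > 1$ the Cantor metric never reaches $\delta$, so $D_\delta \equiv 0$ on $\Sigma$ and thus $\Sep(\sigmaX, \delta, \nu) \leq 1$, contributing nothing to the supremum over $\delta > 0$ in the definitions of $\uac$ and $\oac$. The claimed formulas follow. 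No real obstacle is expected; the subtlest point is the careful transition between the pseudometric on $\Sigma$ and the induced metric on the quotient $[\Sigma]$, but both the Lipschitz comparison and the separation property descend without difficulty.
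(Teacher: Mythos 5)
Your proof is correct and follows essentially the same route as the paper: Lipschitz-equivalence of the $D_\delta$'s plus Lipschitz-invariance of box dimension for the first claim, and the identification $\Sep(\sigmaX,\delta,\nu)=M_\nu([X])$ for the second. The only point the paper makes explicit that you leave implicit is that finite separation numbers guarantee $[X]$ is totally bounded (Proposition~\ref{prop: finite sep numbers iff projection totally bounded}), so that the box dimensions in question are well defined.
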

\begin{proof}
	Fix $\delta\in(0,1]$.
	Since $(X,\sigma)$ has finite separation numbers, the lower and upper box
	dimension of $[X]$ in $([\Sigma],D_\delta)$ are well defined, according to
	Proposition \ref{prop: finite sep numbers iff projection totally bounded}.
	A priori, their values may still depend on $\delta$.
	Yet, using the Lipschitz-equivalence of the $D_\delta$'s (see
	Lemma \ref{prop: euiv family metrics}) and the Lipschitz-invariance of the
	box dimension (see Theorem \ref{thm: properties box dimension}(i)), we have
	that the lower and upper box dimension of $[X]$ in $([\Sigma],D_\delta)$ are
	independent of $\delta$.
	This yields the first part of the statement.
	
	For the second part, observe that for each $\nu\in(0,1]$ we have
	$\Sep(\sigmaX,\delta,\nu)=M^\delta_\nu([X])$, where $M^\delta_\nu([X])$ denotes
	the maximal cardinality of a $\nu$-separated subset of $[X]$ equipped with $D_\delta$.
	Then, 
	\begin{align*}
		\uac\big(\sigmaX\big)
		&=\sup_{\delta>0}\liminf_{\nu \to 0}\frac{\log \Sep(\sigmaX,\delta,\nu)}{-\log \nu}
		=			\sup_{\delta>0}\liminf_{\nu \to 0}\frac{\log M^\delta_\nu([X])}{-\log \nu}
		=\sup_{\delta>0}\underline\dim_B([X])\\
		&=\underline\dim_B([X]).
	\end{align*}
	Clearly, a similar statement holds for the upper amorphic complexity.
\end{proof}

Due to the previous statement, we are now in a position to study the
amorphic complexity from a geometric perspective.
For instance, we can understand the topological invariance of amorphic complexity
(of symbolic systems) also from the perspective of the following lemma.

\begin{lem}\label{lem: conjugacy between power shifts is bi-Lipschitz}
	Assume that $X\ssq A^\Z$ and $Y\ssq B^\Z$ are $\sigma^n$- and $\sigma^m$-invariant,
	respectively, with $n,m\in \N$.
	If $h$ is a factor map from $(X,\sigma^n)$ to $(Y,\sigma^m)$, then
	\begin{align}\label{eq: factor map Besicovitch space}
		[x]_{A}\mapsto[h(x)]_{B}
	\end{align}
	is a Lipschitz-continuous map from $[X]_{A}$ to $[Y]_{B}$
	equipped with the metric $D_1$.
	In particular, if $h$ is a conjugacy, then \eqref{eq: factor map Besicovitch space}
	is a Lipschitz-continuous homeomorphism with a Lipschitz-continuous inverse.
\end{lem}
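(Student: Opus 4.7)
The plan is to argue via a Curtis–Hedlund–Lyndon-type representation of $h$ as a sliding block code and then to translate disagreements at the symbolic level into disagreements at the image level by a direct counting argument.

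First I would use continuity and compactness to produce a local rule for $h$. Since $h:X\to Y$ is continuous with $X$ compact and $Y\ssq B^{\Z}$ carries the product topology, there exists $N\in\N$ such that for every $r\in\{0,\ld m-1\}$ the coordinate $h(x)_r$ is determined by the block $x_{[-N,N]}=(x_{-N}\ld x_N)$. Invoking the equivariance $h\circ \sigma^n=\sigma^m\circ h$, this gives
\[
	h(x)_{mk+r} = h(\sigma^{nk}(x))_r = \phi_r\bigl(x_{nk-N}\ld x_{nk+N}\bigr)
\]
for every $k\in\Z$ and $0\le r<m$, for some maps $\phi_0\ld \phi_{m-1}\:A^{2N+1}\to B$. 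Thus $h(x)_{mk+r}$ depends only on the window of $x$ of radius $N$ around position $nk$.

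Next I would turn this into a density estimate. Fix $x,y\in X$. If $h(x)_{mk+r}\neq h(y)_{mk+r}$ for some $r\in\{0,\ld m-1\}$, then $x_j\neq y_j$ for at least one $j\in [nk-N,nk+N]$. Hence, writing $B_K=\#\{0\le \ell\le K-1\:h(x)_\ell\neq h(y)_\ell\}$ and $A_L=\#\{0\le j\le L-1\:x_j\neq y_j\}$, one obtains for $K=mQ$ the bound
\[
	B_K\;\le\;m\cdot \#\bigl\{0\le k<Q\:\exists j\in[nk-N,nk+N] \text{ with } x_j\neq y_j\bigr\}.
\]
Since each coordinate $j$ lies in at most $\lfloor 2N/n\rfloor+1$ of the windows $[nk-N,nk+N]$, the right-hand side is bounded by $m\bigl(\lfloor 2N/n\rfloor+1\bigr)\cdot A_{nQ+N}$ up to a constant boundary correction coming from the window $[-N,0)$ which contributes only $O(1)$. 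Dividing by $K=mQ$, letting $K\to\infty$, and absorbing boundary terms into the limsup, yields
\[
	D_1^{B}\bigl(h(x),h(y)\bigr)\;\le\;C\cdot D_1^{A}(x,y)
\]
for $C=n\bigl(\lfloor 2N/n\rfloor+1\bigr)$. In particular $D_1^A(x,y)=0$ implies $D_1^B(h(x),h(y))=0$, so the map \eqref{eq: factor map Besicovitch space} is well defined on equivalence classes, and the displayed inequality then is precisely the Lipschitz estimate on $[X]_A$.

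For the last assertion, if $h$ is a conjugacy then $h^{-1}\:(Y,\sigma^m)\to(X,\sigma^n)$ is again a continuous equivariant map between subshifts, so the same argument applies to $h^{-1}$ and produces a Lipschitz-continuous inverse of \eqref{eq: factor map Besicovitch space}. The main technical point is the clean accounting in the counting step, in particular the fact that the different ``time scales'' $n$ and $m$ only introduce a multiplicative constant and that the boundary contributions from the finite window of the local rule are harmless in the limsup; once this is organised the remainder is essentially bookkeeping.
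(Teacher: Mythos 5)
Your argument is correct, and the well-definedness of the map on equivalence classes, the Lipschitz estimate, and the reduction of the conjugacy case to two applications of the one-sided estimate are all in order. The route is, however, organised quite differently from the paper's. The paper never invokes a Curtis--Hedlund--Lyndon local rule: it works purely with metric estimates, choosing thresholds $\delta,\eta,\eta'$ via uniform continuity of $h$ and of the shift, chaining the inequalities $D_1(h(x),h(x'))\leq\udens(\dsep(\sigma^m,\delta,h(x),h(x')))\leq\udens(\dsep(\sigma^n,\eta,x,x'))\leq D_{\eta'}(x,x')$, and then outsourcing the final step $D_{\eta'}\leq C\cdot D_1$ to Lemma~\ref{prop: euiv family metrics}, where the window-with-multiplicity counting that you perform by hand is carried out once and for all. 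Your version inlines that counting: the sliding-block representation $h(x)_{mk+r}=\phi_r(x_{[nk-N,nk+N]})$ makes the finite-range dependence explicit, and the double-counting of coordinates over windows (each $j$ meeting at most $\lfloor 2N/n\rfloor+1$ windows) is exactly the mechanism hidden inside the paper's appeal to uniform continuity plus Lemma~\ref{prop: euiv family metrics}. What your approach buys is self-containedness and an explicit Lipschitz constant $n(\lfloor 2N/n\rfloor+1)$; what the paper's buys is that the same abstract argument applies verbatim with any metric inducing the product topology and reuses an already-proved lemma rather than repeating the bookkeeping. One small point worth making explicit in a final write-up: you compute the limsup defining $D_1^B(h(x),h(y))$ only along $K=mQ$, so you should note that monotonicity of $K\mapsto B_K$ ensures this subsequence realises the full limsup; this is routine but should be said.
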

\begin{proof}
	Choose $\delta>0$ small enough such that for all $y,y'\in Y$ we have that if
	there is $s\in\{-m/2,\ldots,m/2\}$ with $d\left(\sigma^{s}(y),\sigma^{s}(y')\right)\geq 1$,
	then  $d\left(y,y'\right)\geq\delta$.
	Observe that
	\[
		\begin{split}
			&\udens\left(\dsep(\sigma^m,\delta,y,y')\right)
			=\varlimsup\limits_{\ell\to\infty}1/\ell\cdot \#\left\{0\leq k\leq \ell-1 :
				d(\sigma^{km}(y),\sigma^{km}(y'))\geq\delta\right\}\\
			&\geq \varlimsup\limits_{\ell\to\infty}1/\ell\cdot \#\left\{0\leq k\leq \ell-1 :
				\exists s \in \{-m/2,\ldots,m/2\} \textnormal{ s.t. }
				d(\sigma^{km+s}(y),\sigma^{km+s}(y'))\geq1\right\}
		\end{split}
	\]
	which yields
	\begin{align}\label{eq: 1}
		\udens\left(\dsep(\sigma^m,\delta,y,y')\right)\geq\udens(\dsep(\sigma,1,y,y'))
		=D_1(y,y').
	\end{align}
	Next, because $h$ is uniformly continuous, there is $\eta\in(0,1]$ such that for all $x,x'\in X$ we have
	that $d(h(x),h(x'))\geq\delta$ implies $d(x,x')\geq\eta$.
	Therefore,
	\begin{align}\label{eq: 2}
		\begin{split}
			&\udens(\dsep(\sigma^m,\delta,h(x),h(x')))\\
				&=\varlimsup\limits_{\ell\to\infty}1/\ell\cdot \#\left\{0\leq k\leq \ell-1 : 
				d(\sigma^{km}(h(x)),\sigma^{km}(h(x')))\geq\delta\right\}\\
				&=\varlimsup\limits_{\ell\to\infty}1/\ell\cdot \#\left\{0\leq k\leq \ell-1 : 
				d(h(\sigma^{kn}(x)),h(\sigma^{kn}(x')))\geq\delta\right\}\\
				&\leq 
				\varlimsup\limits_{\ell\to\infty}1/\ell\cdot \#\left\{0\leq k\leq \ell-1 : 
				d(\sigma^{kn}(x),\sigma^{kn}(x'))\geq\eta\right\}=
			\udens(\dsep(\sigma^n,\eta,x,x')).
		\end{split}
	\end{align}
	Finally, choose $\eta'$ small enough such that for all $x,x'\in X$ we have
	that $d(x,x')\geq\eta$ implies $d(\sigma^s(x),\sigma^s(x'))\geq \eta'$ for
	all $s\in\{-n/2,\ldots,n/2\}$.
	Observe that $D_{\eta'}(x,x')=\udens(\dsep(\sigma,\eta',x,x'))
		\geq\udens(\dsep(\sigma^n,\eta,x,x'))$.
	Together with \eqref{eq: 1} and \eqref{eq: 2}, this yields
	\[
		D_{\eta'}(x,x')\geq \udens(\dsep(\sigma^n,\eta,x,x'))\geq 
		\udens(\dsep(\sigma^m,\delta,h(x),h(x')))\geq D_1(h(x),h(x')).
	\]	
	This proves that the map $[x]_{A}\mapsto[h(x)]_{B}$ is well defined and
	Lipschitz-continuous as a map from $([X]_{A},D_{\eta'})$ to $([Y]_{B},D_1)$.
	Using Lemma \ref{prop: euiv family metrics}, the statement follows.
\end{proof}

By means of Theorem \ref{thm: properties box dimension}(i) and
Theorem \ref{thm: amorphic complexity and box dimension} the previous statement
immediately yields the following fact.

\begin{cor}\label{cor: invariance amorphic complexity}
	If $(X,\sigma)$ and $(Y,\sigma)$ are conjugate, then
	\[
		\uac(\sigmaX)=\uac(\sigmaY)
		\quad\textnormal{and}\quad
		\oac(\sigmaX)=\oac(\sigmaY).
	\]
\end{cor}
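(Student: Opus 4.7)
The plan is to assemble the corollary from three ingredients already on the table: the bi-Lipschitz lemma just proved, the Lipschitz-invariance of box dimension, and the identification of amorphic complexity with box dimension in the Besicovitch quotient.

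First I would apply Lemma \ref{lem: conjugacy between power shifts is bi-Lipschitz} in the special case $n=m=1$. If $h\colon X\to Y$ is a conjugacy between the subshifts $(X,\sigma)\ssq A^\Z$ and $(Y,\sigma)\ssq B^\Z$, then the lemma asserts that the induced map $[x]_A\mapsto[h(x)]_B$ is a Lipschitz-continuous map from $[X]_A$ to $[Y]_B$ (both equipped with $D_1$), and applying the lemma to $h^{-1}$ yields that the inverse is Lipschitz-continuous as well. Thus $[X]_A$ and $[Y]_B$ are bi-Lipschitz equivalent as metric subspaces of the respective Besicovitch spaces.

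Next I would invoke Theorem \ref{thm: properties box dimension}(i), which guarantees that the lower and upper box dimensions are preserved under Lipschitz-continuous homeomorphisms with Lipschitz-continuous inverses. Applied to the bi-Lipschitz identification from the previous step, this gives
\[
\underline\dim_B([X]_A)=\underline\dim_B([Y]_B)
\quad\text{and}\quad
\overline\dim_B([X]_A)=\overline\dim_B([Y]_B).
\]

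Finally, if either $(X,\sigma)$ or $(Y,\sigma)$ has infinite separation numbers, then so does the other (this follows either from Theorem \ref{thm: properties ac}(i), since conjugacy makes each a factor of the other, or directly from the bi-Lipschitz identification together with Proposition \ref{prop: finite sep numbers iff projection totally bounded}), and both amorphic complexities equal $\infty$. Otherwise, Theorem \ref{thm: amorphic complexity and box dimension} translates the box dimension equalities directly into $\uac(\sigmaX)=\uac(\sigmaY)$ and $\oac(\sigmaX)=\oac(\sigmaY)$, as desired. Since every step is an immediate invocation of a previously established result, I do not anticipate any real obstacle; the only subtlety is to notice that the case of infinite separation numbers must be disposed of separately, because Theorem \ref{thm: amorphic complexity and box dimension} is stated under the finite separation numbers hypothesis.
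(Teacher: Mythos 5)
Your proof is correct and follows essentially the same route as the paper, which derives the corollary immediately from Lemma \ref{lem: conjugacy between power shifts is bi-Lipschitz} (with $n=m=1$), Theorem \ref{thm: properties box dimension}(i), and Theorem \ref{thm: amorphic complexity and box dimension}. Your explicit handling of the infinite separation numbers case is a point the paper leaves implicit, but it does not change the argument.
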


Let us point out that, given a constant length substitution $\sub:A\to\nefw$,
we may also consider it a map on $[\Sigma]$ by putting $\sub([x])=[\sub (x)]$.
Note that for all $[x],[y]\in[\Sigma]$
\[
	D_1(\sub([x]),\sub([y]))\leq D_1([x],[y]).
\]
In a similar way, we can consider the left shift a mapping on the
Besicovitch space by setting $\sigma ([x])=[\sigma(x)]$.
Observe that the shift becomes an isometry on $[\Sigma]$.

\begin{prop}\label{prop: equivalence D_delta and D_B and MEF}
	A subshift $(X,\sigma)$ is mean equicontinuous if and only if the map
	$\left.[\cdot]\right|_X$ is continuous.
	In this case, $([X],\sigma)$ is the maximal equicontinuous factor of $(X,\sigma)$.
\end{prop}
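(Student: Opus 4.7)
My plan is to split the proof into two parts: the equivalence of the two characterizations, and the identification of $([X],\sigma)$ as the maximal equicontinuous factor.

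For the equivalence, I intend to combine the Section~\ref{sec:mean equicontinuity} characterisation---mean equicontinuity of $(X,\sigma)$ is equivalent to continuity of $D_B\colon X \times X \to [0,\infty)$---with two elementary sandwich estimates relating $D_B$ and $D_\delta$. Specifically, for any $\delta \in (0,1]$ and all $x,y \in X$,
\[
	\delta \cdot D_\delta(x,y) \;\leq\; D_B(x,y) \;\leq\; \delta + \diam(X) \cdot D_\delta(x,y).
\]
The left inequality is \eqref{eq: besicovitch metric and density of separation}, obtained from $d(\sigma^k x, \sigma^k y) \geq \delta \cdot \ind_{\{d(\sigma^k x, \sigma^k y) \geq \delta\}}$; the right one follows by splitting the Birkhoff sum according to whether $d(\sigma^k x, \sigma^k y)$ is below or above $\delta$ and bounding each ``large'' summand by $\diam(X)$. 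The left inequality propagates continuity of $D_B$ at the diagonal to $D_\delta$, while the right one (take $\delta$ first arbitrarily small, then a neighbourhood where $\diam(X) \cdot D_\delta$ is small) gives the converse. Since both $D_B$ and $D_\delta$ are pseudometrics, continuity at the diagonal upgrades via $|D(x',y') - D(x,y)| \leq D(x',x) + D(y',y)$ to joint continuity on $X \times X$, and continuity of $\left.[\cdot]\right|_X$ is exactly continuity of $D_\delta$ at the diagonal.

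For the maximal equicontinuous factor statement, observe first that under mean equicontinuity $[X]$ is the continuous image of the compact set $X$, hence compact in $([\Sigma], D_\delta)$, and that $\sigma$ acts as an isometry on $[\Sigma]$ (as remarked immediately before the proposition). Therefore $([X], \sigma)$ is a compact equicontinuous dynamical system and $\left.[\cdot]\right|_X$ is a continuous factor map. For the universal property, I plan to show that any equicontinuous factor $h \colon (X,\sigma) \to (Y,g)$ factors through $\left.[\cdot]\right|_X$: i.e., that $[x] = [y]$ implies $h(x) = h(y)$. To do so I will remetrise $Y$ so that $g$ becomes an isometry (for minimal subshifts this is immediate from the Halmos--von Neumann structure theorem; in the general invertible case it follows from compactness of the Ellis group). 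Then $d_Y(g^k h(x), g^k h(y))$ is independent of $k$, and uniform continuity of $h$ produces a $\xi > 0$ such that if $h(x) \neq h(y)$ then $d(\sigma^k x, \sigma^k y) \geq \xi$ for \emph{every} $k$, forcing $D_\xi(x,y) = 1$ and contradicting $[x] = [y]$. The induced map $\tilde h \colon [X] \to Y$ is then well defined, intertwines the actions, and is continuous by the same argument combined with the Lipschitz equivalence of the $D_\delta$'s from Lemma~\ref{prop: euiv family metrics}.

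The main obstacle is the final implication in the universal property---converting the abstract notion of equicontinuity on $Y$ into a genuine metric invariance that can be exploited over the full $\sigma$-orbit. Everything else in the proof is either immediate from the definitions or from the sandwich estimates above.
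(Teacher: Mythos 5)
Your proof is correct. The first half (the equivalence of mean equicontinuity with continuity of $\left.[\cdot]\right|_X$) is essentially the paper's argument: the paper uses the same lower bound $\delta\cdot D_\delta\leq D_B$ from \eqref{eq: besicovitch metric and density of separation} and the same splitting of the Birkhoff sum, in the form $D_B(x_n,x)\leq D_\eps(x_n,x)+\eps\cdot(1-D_\eps(x_n,x))$ (the Cantor metric has diameter $1$, so this is your sandwich), together with the Lipschitz equivalence of the $D_\delta$'s from Lemma~\ref{prop: euiv family metrics}. The second half is where you genuinely diverge. The paper does not verify the universal property by hand: having observed that $[\cdot]$ is a factor map onto the equicontinuous system $([X],\sigma)$ and that $[x]=[y]$ iff $D_B(x,y)=0$, it simply invokes the classical fact (Auslander, or García-Ramos) that the maximal equicontinuous factor map $q$ of a mean equicontinuous system satisfies $q(x)=q(y)$ iff $D_B(x,y)=0$, so $[\cdot]$ and $q$ identify the same points. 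You instead prove the universal property directly: remetrize an arbitrary equicontinuous factor $(Y,g)$ so that $g$ is an isometry, and use uniform continuity of the factor map $h$ to show that $h(x)\neq h(y)$ forces $d(\sigma^k x,\sigma^k y)\geq\xi$ for all $k$, hence $D_\xi(x,y)=1$ and $[x]\neq[y]$. This argument is sound (after shrinking $\xi$ below $\delta_0^d$ so that Lemma~\ref{prop: euiv family metrics} applies, and granting the standard fact that a surjective equicontinuous self-map of a compact metric space is invertible with equicontinuous two-sided iterates, so the invariant metric exists -- your appeal to Halmos--von Neumann or the Ellis group is heavier than needed but not wrong). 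What the paper's route buys is brevity, at the cost of importing the $D_B$-characterization of the maximal equicontinuous factor as a black box; what your route buys is a self-contained proof that in fact establishes that characterization for subshifts along the way.
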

\begin{proof}
	For the first part, recall that $(X,\sigma)$ is mean equicontinuous if and only if
	$D_B$ is continuous.
	It hence suffices to show that $D_B$ and $D_1$ induce the same topology, that is,
	for all $(x_n)_{n\in\N}\in X^\N$ and $x\in X$ we have
	\begin{align}\label{eq: equivalence D1 and DB}
	D_1(x_n,x) \stackrel{n\to\infty}{\longrightarrow}0 \qquad \text{if and only if} \qquad 
	D_B(x_n,x) \stackrel{n\to\infty}{\longrightarrow}0.
	\end{align}
	To that end, recall that
	\[
		D_1(x_n,x)=\udens(\dsep(\sigmaX,1,x_n,x))\leq D_B(x_n,x),
	\]
	see \eqref{eq: besicovitch metric and density of separation}.
	This shows the ``if''-part of \eqref{eq: equivalence D1 and DB}.
	To see the ``only if'', suppose	$D_1(x_n,x)\stackrel{n\to\infty}{\longrightarrow} 0$.
	Let $\eps>0$ and choose $n_\eps\in\N$ such that $D_\eps(x_n,x)<\eps$ for $n>n_\eps$.
	Note that this is possible since $D_\eps$ induces the same topology as $D_1$.
	Then we obtain for all $n>n_\eps$ that
        \[
		D_B(x_n,x)\leq D_\eps(x_n,x)+\eps\cdot (1-D_\eps(x_n,x))\leq 2\eps.
        \]
	Since $\eps>0$ was arbitrary, this shows $D_B(x_n,x)\stackrel{n\to\infty}{\longrightarrow} 0$
	and hence \eqref{eq: equivalence D1 and DB}.

	Concerning the second part, observe that the first part and the discussion before the statement show that 
	$[\cdot]$ is actually a factor map from $(X,\sigma)$ onto $([X],\sigma)$. 
	Furthermore, we have that $[x]=[y]$ if and only if $D_B(x,y)=0$.
	Now, if $q$ denotes the factor map of the mean equicontinuous system $(X,\sigma)$
	onto its maximal equicontinuous factor, it is well known that $q(x)=q(y)$ if
	and only if $D_B(x,y)=0$ (see for example \cite[Theorem 6]{Auslander1959} or
	\cite[Proposition 49]{Garcia-Ramos2017}).
	Hence, $[\cdot]$ and $q$ identify the same points.
	This proves the statement.
\end{proof}

Now, let us recall the fact that the box dimension is always bounded from below by the
topological dimension (see for instance \cite[Section~3.1]{Edgar1998}).
With this in mind, the previous proposition has an immediate consequence: it implies
that the amorphic complexity of a mean equicontinuous subshift is bounded from
below by the topological dimension of its maximal equicontinuous factor.

This observation may be of particular interest for substitutive subshifts
corresponding to  Pisot type substitutions over two
symbols (for a definition of substitutions of Pisot type, see for example Section
1.2.5 in \cite{Fogg2002}).
According to \cite{CanteriniSiegel2001}, all substitutions of Pisot type are
primitive.
Moreover, all subshifts over two symbols associated to these substitutions have
discrete spectrum, see \cite{BargeDiamond2002} and \cite{HollanderSolomyak2003}.
Hence, we can use Corollary \ref{cor:discrete spectrum iff finite sep numbers}
to deduce that the corresponding subshifts are mean equicontinuous so that we can apply
the above observation.

Two well-known examples belonging to the class of binary substitutions of Pisot
type are the Fibonacci and Tribonacci substitution.
For these two substitutions we obtain the concrete lower bound for the amorphic
complexity of one and two, respectively.
This holds true because the maximal equicontinuous factor of the former is the
circle and that of the latter is the two-torus (see, again, \cite{Fogg2002} for
more information).


\section{Amorphic complexity of constant length substitution subshifts}
\label{sec: ac of constant length subs}

From now on, $\sub:A\to\nefw$ will always denote a primitive constant length
substitution.
Further, we set $D=D_1$.
Recall that to each $\sub$ we can associate a minimal and uniquely ergodic
subshift, denoted by $(X_{\sub},\sigma)$.
The goal of this section is to examine the amorphic complexity of $(X_{\sub},\sigma)$
by studying its projection in the Besicovitch space $([\Sigma],D)$.


\subsection{Iterated function systems and cyclic partitions}\label{sec:IFS and cyclic partitions}

A finite family of contracting maps $(\varphi_i)_{i\in\{0,\ldots,\ell-1\}}$ on a
complete metric space $(M,\rho)$ is referred to as an \emph{iterated function system (IFS)}.
Given such an IFS, it is well known that there exits a unique compact subset
$K\ssq M$, called the \emph{attractor} of the IFS, fulfilling
\begin{align}\label{eq:IFS condition attractor}
	K=\bigcup\limits_{i=0}^{\ell-1} \varphi_i(K).
\end{align}
If, additionally, this union is disjoint, we say the IFS satisfies the
\emph{strong separation condition (SSC)}.
For more information on IFS's, see for instance \cite{Hutchinson1981,RajalaVilppolainen2011}.

In the following, we construct suitable iterated function systems
that we will use to estimate the amorphic complexity of infinite subshifts with
discrete spectrum associated to constant length substitutions.
A key ingredient for doing this will be the notion of cyclic $\sigma^n$-minimal
partitions.

Let $(X,f)$ be a minimal dynamical system.
We say a partition of $X$ is \emph{cyclic} if it partitions $X$ into disjoint
subsets $X_0,\ldots,X_{m-1}\subseteq X$, $m\in\N$ with $f(X_{i-1})=X_{i}$ for
$1\leq i<m$ and $f(X_{m-1})=X_0$.
A cyclic partition is called \emph{$f^n$-minimal} if each of its elements is $f^n$-minimal.
Note that such a partition is unique (up to cyclic permutation of its members).
Given an $f^n$-minimal set $X_0\ssq X$, it is easy to see that there is $1\leq m\leq n$
such that $X_0,f(X_0),\ldots,f^{m-1}(X_{0})$ is a (possibly trivial) cyclic $f^n$-minimal
partition of $X$, see \cite[Lemma 15]{Kamae1972}.
For each $n\in\N$ we denote by $\gamma(n)$ the number of elements of a cyclic
$f^n$-minimal partition of $X$.

Now, coming back to substitutive subshifts, observe that
\[
	\sub\circ\sigma=\sigma^{\abs{\sub}}\circ\sub.
\]
In other words, $\sub$ is a factor map from $(X_\sub,\sigma)$ to $(\sub(X_\sub),\sigma^{|\sub|})$.
Hence, the minimality of $(X_\sub,\sigma)$ implies that $\sub(X_{\sub})$ is
$\sigma^{\abs{\sub}}$-minimal.
Since each word that appears in $\sub(X_\sub)$ also appears in $X_\sub$,
we clearly have $\sub(X_\sub)\ssq X_\sub$, see also \cite[Lemma 3]{Kamae1972}.
Further, if $(X_{\sub},\sigma)$ is infinite, it turns out that
$\gamma(\abs{\sub})=\abs{\sub}$, see \cite[Lemma 7]{Dekking1978}.
Hence, in this case, we obtain the cyclic $\sigma^{\abs{\sub}}$-minimal partition
\begin{align}\label{eq:cyclic shift-minimal partition}
	X_{\sub}=\bigsqcup\limits_{i=0}^{\abs{\sub}-1}(\sigma^i\circ\sub)(X_{\sub}).
\end{align}

Our goal is to carry over \eqref{eq:cyclic shift-minimal partition} to the
Besicovitch space and to provide criteria which ensure that the maps
$\{\sigma^i\circ\sub\}_{i\in\{0,\ldots,\abs{\sub}-1\}}$ are contractions on $([\Sigma],D)$.
To that end, we need the following auxiliary statement.

\begin{prop}\label{prop: 0 besicovitch distance implies equality of subshifts}
	Let $(X,\sigma)$ and $(Y,\sigma)$ be two minimal subshifts of $(\Sigma,\sigma)$.
	If $[X]\cap [Y]\neq \emptyset$, then we have that $X=Y$.
\end{prop}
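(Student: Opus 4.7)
The plan is to argue by contrapositive: assuming $X\neq Y$, I will show that $D_1(x,y)>0$ for every $x\in X$ and $y\in Y$, which will force $[X]\cap[Y]=\emptyset$. The principal tool is the observation recorded at the end of Section~\ref{sec: preliminaries}: since $X$ and $Y$ are distinct minimal subshifts of $\Sigma$, there exists $n\in\N$ with ${\mc L}^n(X)\cap{\mc L}^n(Y)=\emptyset$.

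With such $n$ fixed, pick arbitrary $x\in X$ and $y\in Y$. For every $k\in\Z$ the word $x_{[k,k+n-1]}$ lies in ${\mc L}^n(X)$ while $y_{[k,k+n-1]}$ lies in ${\mc L}^n(Y)$, so these two blocks must disagree in at least one coordinate. Hence every interval of length $n$ in $\Z$ contains at least one position $j$ with $x_j\neq y_j$. Since we may assume that $d$ is a Cantor metric with $\delta_0^d=1$, disagreement at position $j$ is equivalent to $d(\sigma^j(x),\sigma^j(y))\geq 1$. A pigeonhole count on $[0,N-1]$ therefore gives
\[
	\#\{0\leq j\leq N-1:\ d(\sigma^j(x),\sigma^j(y))\geq 1\}\ \geq\ \lfloor N/n\rfloor,
\]
so $D_1(x,y)=\udens(\dsep(\sigma,1,x,y))\geq 1/n>0$.

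Thus no representative of $[x]$ in $X$ can coincide with any representative of $[y]$ in $Y$ under the equivalence induced by $D_1$, which means $[X]\cap[Y]=\emptyset$, contradicting the hypothesis. I do not anticipate any real obstacle here; the only step requiring a moment's care is the reduction from ``blocks disagree'' to ``some single coordinate disagrees'' and the resulting lower density bound, but this is immediate from the definitions. The proof relies essentially only on the disjoint-language fact for distinct minimal subshifts and the ultrametric choice of $d$ that makes $\delta_0^d=1$ detect single-symbol differences.
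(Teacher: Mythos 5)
Your proof is correct and follows essentially the same route as the paper's: both invoke the disjointness of $\mathcal{L}^n(X)$ and $\mathcal{L}^n(Y)$ for some $n$ and deduce that any $x\in X$, $y\in Y$ must differ at least once in every window of length $n$, giving $D_1(x,y)\geq 1/n>0$. The paper states the counting bound only along the subsequence $N=\ell n$, while you phrase it via $\lfloor N/n\rfloor$ for general $N$; this is an immaterial difference.
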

\begin{proof}
	Assume for a contradiction that $X\neq Y$.
	Then there exists $n\in\N$ such that $\mc L^n(X)\cap \mc L^n(Y)=\emptyset$
	(see Section~\ref{sec: preliminaries}).
	Hence, for all $x\in X$ and $y\in Y$ and all $\ell\in\N$ we obtain
	\[
		1/(\ell n) \cdot \#\{0\leq k \leq \ell n -1 : x_k\neq y_k\}\geq 1/n,
	\]
	so that $D(x,y)\geq 1/n$ which contradicts the assumptions.
\end{proof}

\begin{lem}\label{lem: shift-minimal partition}
	Suppose $(X,\sigma)$ is a subshift and $X_0,\ldots,X_{m-1}\subseteq X$ is a
	cyclic $\sigma^n$-minimal partition of $X$ for some $m,n\in\N$.
	Then
	\begin{align}\label{eq: shift-minimal partition}
		[X]=\bigsqcup\limits_{i=0}^{m-1}[X_i]=\bigsqcup\limits_{i=0}^{m-1}\sigma^{i}\left([X_0]\right).
	\end{align}
\end{lem}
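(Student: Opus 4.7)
The first equality $[X]=\bigcup_{i=0}^{m-1}[X_i]$ is immediate from $X=\bigsqcup_i X_i$, provided the union on the right is disjoint. The second equality reduces to $[X_i]=\sigma^i([X_0])$: iterating the cyclicity $\sigma(X_{i-1})=X_i$ gives $X_i=\sigma^i(X_0)$, and since $\sigma$ descends to an isometry on $([\Sigma],D)$ via $\sigma([x])=[\sigma(x)]$ (as remarked before Proposition~\ref{prop: equivalence D_delta and D_B and MEF}), we obtain $[X_i]=[\sigma^i(X_0)]=\sigma^i([X_0])$. Thus the whole lemma boils down to establishing that $[X_i]\cap[X_j]=\emptyset$ whenever $i\neq j$.

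To prove this disjointness I would apply the block-coding conjugacy $\pi\colon (A^\Z,\sigma^n)\to ((A^n)^\Z,\sigma)$ defined by $(\pi(x))_k=(x_{kn},x_{kn+1},\ldots,x_{(k+1)n-1})$. Since the $X_i$ are pairwise disjoint closed $\sigma^n$-minimal subsets of $A^\Z$, their images $\pi(X_i)\ssq (A^n)^\Z$ are pairwise disjoint closed $\sigma$-minimal subshifts. In particular, for $i\neq j$ the sets $\pi(X_i)$ and $\pi(X_j)$ are distinct minimal $\sigma$-subshifts over the alphabet $A^n$, so by the elementary fact recalled at the end of Section~\ref{sec: preliminaries} there exists $N\in\N$ with $\mathcal{L}^N(\pi(X_i))\cap\mathcal{L}^N(\pi(X_j))=\emptyset$.

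Now fix $x\in X_i$ and $y\in X_j$. The disjointness of these languages at length $N$ forces, for every $k\in\N_0$, the $N$-blocks $\pi(x)_{kN}\cdots\pi(x)_{(k+1)N-1}$ and $\pi(y)_{kN}\cdots\pi(y)_{(k+1)N-1}$ to differ as elements of $(A^n)^N$. Unfolding back to $A^\Z$ this means that in every window $\{kNn,\ldots,(k+1)Nn-1\}$ of length $Nn$ there is at least one position $\ell$ with $x_\ell\neq y_\ell$. Summing over the first $L$ such windows gives
\[
\#\{0\leq \ell<LNn : x_\ell\neq y_\ell\}\geq L,
\]
so that $D(x,y)\geq 1/(Nn)>0$ and hence $[x]\neq[y]$. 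As $x\in X_i$ and $y\in X_j$ were arbitrary, $[X_i]\cap[X_j]=\emptyset$, which completes the proof.

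The only genuine obstacle is the conceptual reduction from the $\sigma^n$-minimal setting to a $\sigma$-minimal one via $\pi$; once this is in place, the remaining argument is essentially a repetition of the proof of Proposition~\ref{prop: 0 besicovitch distance implies equality of subshifts}, carried out window by window instead of position by position.
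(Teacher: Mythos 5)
Your proof is correct, and its skeleton is the same as the paper's: both reduce the lemma to the pairwise disjointness of the $[X_i]$, and both handle the $\sigma^n$-minimality by passing to the higher power code $q=\pi$ so that the $q(X_i)$ become distinct $\sigma$-minimal subshifts over $A^n$ with eventually disjoint languages. The difference lies in the final transfer step: the paper cites Proposition~\ref{prop: 0 besicovitch distance implies equality of subshifts} in the coded space and then invokes Lemma~\ref{lem: conjugacy between power shifts is bi-Lipschitz} to carry the disjointness of $[q(X_i)]_{A^n}$ and $[q(X_j)]_{A^n}$ back to $[X_i]_A$ and $[X_j]_A$, whereas you bypass both citations by unfolding the disjoint-language condition directly into the estimate $D(x,y)\geq 1/(Nn)$ for $x\in X_i$, $y\in X_j$. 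Your route is slightly more self-contained (it never needs the bi-Lipschitz lemma, only the isometry of $\sigma$ on $[\Sigma]$) and makes the positive lower bound on $D$ explicit; the paper's version is shorter on the page because it reuses machinery that is needed elsewhere anyway. Both are complete proofs.
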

\begin{proof}
	Let $q\:\Sigma\to\Sigma'=(A^n)^\Z$ denote the canonical higher power code of
	$\Sigma$, see for example Section 1.4 in \cite{LindMarcus1995}.
	Observe that $q$ is a conjugacy between $(\Sigma,\sigma^n)$ and $(\Sigma',\sigma)$.
	Hence, $(q(X),\sigma)$ is a subshift of $(\Sigma',\sigma)$ which is conjugate
	to $(X,\sigma^n)$.

	Moreover, we have that $(q(X_i),\sigma)$ ($i=0,\ldots,m-1$) is minimal, so that
	Proposition~\ref{prop: 0 besicovitch distance implies equality of subshifts}
	implies $[q(X_i)]_{A^n}\cap [q(X_j)]_{A^n}=\emptyset$
	if $i\neq j$.

	Finally, according to Lemma~\ref{lem: conjugacy between power shifts is bi-Lipschitz},
	$q$ can be considered a homeomorphism from
	$[X]_{A}$ to $[q(X)]_{A^n}$ with $q([x]_{A})=[q(x)]_{A^n}$ for $x\in X$.
	Therefore,
	\[
		q([X_i]_{A}\cap [X_j]_{A})
		=q([X_i]_{A})\cap q([X_j]_{A})
		=[q(X_i)]_{A^n}\cap [q(X_j)]_{A^n}=\emptyset,
	\]
	whenever $i\neq j$.
	Hence, $[X_i]_{A}\cap [X_j]_{A}=\emptyset$ which
	shows the first equality in \eqref{eq: shift-minimal partition}.

	For the second equality, note that $X_i=\sigma^i(X_0)$, since $X_0,\ldots,X_{m-1}$ is
	cyclic.
	The statement follows, since $[\sigma^i(X_0)]=\sigma^i([X_0])$.
\end{proof}

Next, we provide a criterion to ensure that the maps
$\{\sigma^i\circ\sub\}_{i\in\{0,\ldots,\abs{\sub}-1\}}$ are indeed contractions.
To that end, set
\[
	c_{a,b}(\sub)=\#\{0\leq j\leq\abs{\sub}-1 : \sub(a)_j=\sub(b)_j\},
\]
where $a\neq b\in A$ and define
\[	
	c(\sub)=\min_{a\neq b} c_{a,b}(\sub)\quad\quad\textnormal{and}\quad\quad
	C(\sub)=\max_{a\neq b} c_{a,b}(\sub).
\]
Note that $0\leq c(\sub)\leq C(\sub)\leq\abs{\sub}$.
If there is no risk of ambiguity, we may omit the argument $\sub$ in the following.

\begin{prop}\label{prop: bounds on contraction rate}
	Let $[x],[y]\in[\Sigma]$.
	Then
	\[
		\frac{\abs{\sub}-C}{\abs{\sub}}\cdot D([x],[y])
		\leq D(\sub([x]),\sub([y]))
		\leq\frac{\abs{\sub}-c}{\abs{\sub}}\cdot D([x],[y]).
	\]
\end{prop}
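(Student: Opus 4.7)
The plan is to pick representatives $x,y \in \Sigma$ of the classes $[x], [y]$ and to compare, block by block, the agreement pattern of $\sub(x)$ and $\sub(y)$ with that of $x$ and $y$. Since $\sub$ has constant length $|\sub|$, the bi-infinite word $\sub(x)$ is just the concatenation of the blocks $\sub(x_j)$, and analogously for $\sub(y)$. Hence every position $k \in \{0,\ldots,n|\sub|-1\}$ has a unique representation $k = j|\sub| + r$ with $0 \le j < n$ and $0 \le r < |\sub|$, and
\[
	\sub(x)_k = \sub(x_j)_r, \qquad \sub(y)_k = \sub(y_j)_r.
\]
If $x_j = y_j$, the entire $j$-th block agrees; if $x_j \ne y_j$, the number of disagreeing positions inside that block is exactly $|\sub| - c_{x_j,y_j}(\sub)$, which by the definition of $c$ and $C$ lies between $|\sub|-C$ and $|\sub|-c$.

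Summing over $j = 0,\ldots,n-1$ therefore yields, for every $n \in \N$, the double estimate
\[
	(|\sub|-C)\cdot N_n(x,y) \;\leq\; \#\{0 \le k < n|\sub| : \sub(x)_k \neq \sub(y)_k\} \;\leq\; (|\sub|-c)\cdot N_n(x,y),
\]
where $N_n(x,y) = \#\{0 \le j < n : x_j \ne y_j\}$. Dividing by $n|\sub|$ and taking $\limsup_{n \to \infty}$ on all sides produces exactly the desired inequalities, except that the middle expression runs only along the subsequence $n|\sub|$ rather than along all integers.

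The remaining step is to handle this subsequence issue. For an arbitrary $m$, write $m = n|\sub| + r$ with $0 \le r < |\sub|$; since the contribution of the incomplete block is bounded by $|\sub|$, one has
\[
	\left|\,\frac{\#\{0\le k<m : \sub(x)_k \neq \sub(y)_k\}}{m} - \frac{\#\{0\le k<n|\sub| : \sub(x)_k \neq \sub(y)_k\}}{n|\sub|}\right| \;\leq\; \frac{|\sub|}{m},
\]
so the $\limsup$ along $m$ and along $n|\sub|$ agree, which gives $D(\sub(x),\sub(y))$ on the left-hand side as required. The $\limsup$ of $N_n(x,y)/n$ is $D(x,y)$ by definition, so the two estimates combine to the claim.

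The argument is essentially routine; the only small care required is the passage between the $\limsup$ along multiples of $|\sub|$ and the full $\limsup$, and checking that the well-definedness of $\sub$ on $[\Sigma]$ (noted in the paragraph preceding Proposition~\ref{prop: bounds on contraction rate}) lets us freely choose representatives $x,y$. No tension arises with the equivalence relation since the bounds involve only $D(\cdot,\cdot)$ values, which descend to the quotient.
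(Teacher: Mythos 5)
Your proof is correct and follows essentially the same route as the paper: decompose $\sub(x)$ and $\sub(y)$ into length-$\abs{\sub}$ blocks, count the disagreements in each block as $\abs{\sub}-c_{x_j,y_j}$, bound this between $\abs{\sub}-C$ and $\abs{\sub}-c$, and pass to the $\limsup$. You are in fact slightly more explicit than the paper about why the $\limsup$ along multiples of $\abs{\sub}$ agrees with the full $\limsup$, which the paper treats as immediate.
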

\begin{proof}
	Given $x,y\in \Sigma$, note that
	\begin{align*}
		&D(\sub(x),\sub(y))=\limsup_{n\to \infty}\frac1{|\sub|\cdot n}
			\cdot\#\{0\leq k\leq |\sub|\cdot n-1 :\sub(x)_k\neq \sub(y)_k\}\\
		&=\limsup_{n\to\infty}\frac1{|\sub|\cdot n}\sum_{a\neq b \in A}
			\#\left\{0\leq k \leq n-1 : (x_k,y_k)=(a,b)\right \} \cdot (|\sub|-c_{a,b})\\
		&\leq \frac{|\sub|-\min_{a\neq b} c_{a,b}}{|\sub|} \cdot \limsup_{n\to\infty}\frac1n
			\sum_{a\neq b \in A} \#\left\{0\leq k \leq n-1\: 
			(x_k,y_k)=(a,b)\right \}\\
		&= \frac{|\sub|-c}{|\sub|}\cdot D(x,y).
	\end{align*}
	Hence, we obtain the second inequality.
	The other inequality follows analogously.
\end{proof}

Note that the estimates from the previous proposition actually hold for all
$\sigma^i\circ\sub$, since the left shift $\sigma$ acts as an isometry on $([\Sigma],D)$.
In particular, each of these maps is a contraction on $([\Sigma],D)$ if and only if
$\sub$ is a contraction on $([\Sigma],D)$.

\begin{prop}\label{prop: contraction iff c>0}
	We have that $\sub:[\Sigma]\to[\Sigma]$ is a contraction if and only if $c>0$.
\end{prop}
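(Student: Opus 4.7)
The plan is to derive both directions directly from the already-established two-sided estimate in Proposition \ref{prop: bounds on contraction rate}. Contraction here is meant in the strict sense relevant to iterated function systems: there exists $\lambda\in[0,1)$ such that $D(\sub([x]),\sub([y]))\leq \lambda\cdot D([x],[y])$ for all $[x],[y]\in[\Sigma]$.

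For the ``if'' direction, assume $c>0$. Then the upper bound of Proposition~\ref{prop: bounds on contraction rate} immediately yields
\[
D(\sub([x]),\sub([y]))\leq \frac{\abs{\sub}-c}{\abs{\sub}}\cdot D([x],[y]) \qquad ([x],[y]\in[\Sigma]),
\]
and since $\lambda=(\abs{\sub}-c)/\abs{\sub}\in[0,1)$, this exhibits $\sub$ as a contraction on $([\Sigma],D)$.

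For the ``only if'' direction, I will argue by contrapositive: assume $c=0$ and exhibit a pair of classes on which $\sub$ preserves distances. By the definition of $c$, there exist distinct $a,b\in A$ with $c_{a,b}(\sub)=0$, that is, $\sub(a)$ and $\sub(b)$ differ in every one of their $\abs{\sub}$ coordinates. Consider the constant sequences $x=(\ldots aaa\ldots)$ and $y=(\ldots bbb\ldots)$ in $\Sigma$. Since $x$ and $y$ disagree at every position, $D(x,y)=1$, and in particular $[x]\neq[y]$ in $[\Sigma]$. On the other hand, $\sub(x)$ is the constant concatenation of the block $\sub(a)$ and $\sub(y)$ is the constant concatenation of $\sub(b)$; by choice of $a,b$ these two bi-infinite words disagree at every position as well, so $D(\sub(x),\sub(y))=1$. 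Consequently $D(\sub([x]),\sub([y]))=D([x],[y])=1$, which rules out the existence of any Lipschitz constant $\lambda<1$ and shows that $\sub$ is not a contraction.

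There is no real obstacle here: both directions reduce to reading off Proposition \ref{prop: bounds on contraction rate}, together with the explicit witness given by constant sequences. The only minor point to check is the well-definedness at the level of $[\Sigma]$, which is automatic since $D(x,y)>0$ guarantees $[x]\neq[y]$ and since $\sub$ descends to $[\Sigma]$ as recorded just before Proposition \ref{prop: equivalence D_delta and D_B and MEF}.
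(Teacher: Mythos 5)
Your proof is correct and follows essentially the same route as the paper: the ``if'' direction is read off from the upper bound in Proposition~\ref{prop: bounds on contraction rate}, and the ``only if'' direction uses the constant sequences $(\ldots,a,a,\ldots)$ and $(\ldots,b,b,\ldots)$ for a pair with $c_{a,b}=0$ as an isometry witness. Your version merely spells out the distance computations ($D(x,y)=D(\sub(x),\sub(y))=1$) that the paper leaves implicit.
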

\begin{proof}
	The ``if''-part immediately follows from Proposition~\ref{prop: bounds on contraction rate}.
	For the other direction, suppose there are $a\neq b\in A$ with $c_{a,b}=0$.
	Then
	\[
		D((\ldots,a,a,\ldots),(\ldots,b,b,\ldots))
		=D(\sub((\ldots,a,a,\ldots)),\sub((\ldots,b,b,\ldots)))
	\]
	and the statement follows.
\end{proof}


\subsection{Dimensional estimates and discrete spectrum}\label{sec:dimensional estimates}

In this section, we obtain lower and upper bounds for the amorphic complexity of
$(X_{\sub},\sigma)$ provided that the collection
$\{\sigma^i\circ\sub\}_{i\in\{0,\ldots,\abs{\sub}-1\}}$ (or some closely related
collection of mappings, see Theorem \ref{thm: general estimates ac for one-to-one subs with coincidence})
is an IFS and that $(X_{\sub},\sigma)$ has discrete spectrum.

Our strategy is as follows: first, we show that the attractor of the above IFS is
$[X_\sub]$ and that the IFS fulfills the strong separation condition. 
Then, we utilize general estimates for the box dimension of an IFS
attractor (and apply Theorem~\ref{thm: amorphic complexity and box dimension}).
We would like to recall that the assumption of discrete spectrum is natural:
if the spectrum of $(X_{\sub},\sigma)$ is not purely discrete, then its amorphic
complexity is infinite (see Corollary \ref{cor:discrete spectrum iff finite sep numbers}).

\begin{thm}\label{thm:discrete spectrum properties IFS}
	Suppose that $(X_{\sub},\sigma)$ is infinite, has discrete spectrum and that $c>0$.
	Then $\{\sigma^i\circ\sub\}_{i\in\{0,\ldots,\abs{\sub}-1\}}$ is an IFS which
	satisfies the SSC and whose attractor is $[X_{\sub}]$.
\end{thm}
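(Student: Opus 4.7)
The plan is to verify three things in sequence: (i) the collection $\{\sigma^i\circ\sub\}_{i\in\{0,\ldots,\abs{\sub}-1\}}$ really is an IFS on the complete space $([\Sigma],D)$; (ii) $[X_{\sub}]$ is a (nonempty) compact subset of $[\Sigma]$; and (iii) $[X_{\sub}]$ satisfies the fixed-point identity \eqref{eq:IFS condition attractor} with a \emph{disjoint} union. Once (i)--(iii) are in place, the uniqueness of the attractor of an IFS on a complete metric space forces $[X_{\sub}]$ to coincide with the attractor, and the disjointness in (iii) is exactly the SSC.

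For (i), Proposition \ref{prop: contraction iff c>0} together with the hypothesis $c>0$ gives that $\sub\:[\Sigma]\to[\Sigma]$ is a contraction. Since the left shift acts as an isometry on $([\Sigma],D)$ (as noted just before Proposition \ref{prop: equivalence D_delta and D_B and MEF}), each composition $\sigma^i\circ\sub$ is a contraction with the same rate, and $([\Sigma],D)$ is complete by the theorem preceding Proposition \ref{prop: finite sep numbers iff projection totally bounded}. For (ii), the hypothesis of discrete spectrum lets us invoke Corollary \ref{cor:discrete spectrum iff finite sep numbers} to conclude that $(X_\sub,\sigma)$ is mean equicontinuous, so by Proposition \ref{prop: equivalence D_delta and D_B and MEF} the quotient map $\left.[\cdot]\right|_{X_{\sub}}\:X_{\sub}\to[\Sigma]$ is continuous. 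Compactness of $X_{\sub}$ then transfers to $[X_{\sub}]$.

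For (iii), I would use the infiniteness assumption together with the fact recalled in Section \ref{sec:IFS and cyclic partitions} (Dekking's result) that
\[
	X_{\sub}=\bigsqcup_{i=0}^{\abs{\sub}-1}(\sigma^i\circ\sub)(X_{\sub})
\]
is a cyclic $\sigma^{\abs{\sub}}$-minimal partition. Applying Lemma \ref{lem: shift-minimal partition} with $n=m=\abs{\sub}$ and $X_i=(\sigma^i\circ\sub)(X_{\sub})$ yields
\[
	[X_{\sub}]=\bigsqcup_{i=0}^{\abs{\sub}-1}\bigl[(\sigma^i\circ\sub)(X_{\sub})\bigr].
\]
Because $\sigma$ and $\sub$ descend to well-defined maps on $[\Sigma]$ via $[\sigma^i\sub(x)]=(\sigma^i\circ\sub)([x])$ (a point recorded right before Proposition \ref{prop: equivalence D_delta and D_B and MEF}), the right-hand side equals $\bigsqcup_{i=0}^{\abs{\sub}-1}(\sigma^i\circ\sub)([X_{\sub}])$, which is precisely \eqref{eq:IFS condition attractor} with disjoint union.

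The main subtlety is clause (iii): the disjointness does not come from a first-principles argument about words but has to be imported through the quotient. This is exactly what Lemma \ref{lem: shift-minimal partition} provides, and it is there where the minimality of the pieces $(\sigma^i\circ\sub)(X_{\sub})$ in $(X_\sub,\sigma^{\abs{\sub}})$ is used, via the key observation of Proposition \ref{prop: 0 besicovitch distance implies equality of subshifts} that distinct minimal subshifts have Besicovitch projections that meet only trivially. Everything else is bookkeeping: with (i)--(iii) established, the Hutchinson uniqueness of the attractor finishes the proof.
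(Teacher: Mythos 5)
Your proposal is correct and follows essentially the same route as the paper: Proposition \ref{prop: contraction iff c>0} for the contraction property, Corollary \ref{cor:discrete spectrum iff finite sep numbers} and Proposition \ref{prop: equivalence D_delta and D_B and MEF} for compactness of $[X_{\sub}]$, and the cyclic partition \eqref{eq:cyclic shift-minimal partition} pushed through Lemma \ref{lem: shift-minimal partition} for the disjoint self-similar decomposition. The only (harmless) difference is that you spell out the appeal to uniqueness of the Hutchinson attractor explicitly, which the paper leaves implicit.
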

\begin{proof}
	First, Proposition~\ref{prop: contraction iff c>0} yields that 
	$\{\sigma^i\circ\sub\}_{i\in\{0,\ldots,\abs{\sub}-1\}}$ is an IFS since $c>0$.
	Second, due to Corollary \ref{cor:discrete spectrum iff finite sep numbers},
	$(X_{\sub},\sigma)$ is mean equicontinuous.
	Hence, Proposition \ref{prop: equivalence D_delta and D_B and MEF} gives that
	the map $X_\sub\ni x\mapsto [x]$  is continuous so that its image $[X_\sub]$
	is compact.
	
	Third, consider the cyclic $\sigma^{\abs{\sub}}$-minimal partition from
	\eqref{eq:cyclic shift-minimal partition}.
	By Lemma \ref{lem: shift-minimal partition},
	we have
	\begin{align}\label{eq: [X] is attractor of IFS}
		\big[X_{\sub}\big]
		=\bigsqcup\limits_{i=0}^{\abs{\sub}-1}\sigma^i\left(\big[\sub(X_{\sub})\big]\right)
		=\bigsqcup\limits_{i=0}^{\abs{\sub}-1}(\sigma^i\circ\sub)\left(\big[X_{\sub}\big]\right),
	\end{align}
	where we used that $\sub$ commutes with $[\cdot]$ 
	(see the discussion before Proposition~\ref{prop: equivalence D_delta and D_B and MEF}).
	
	Altogether, \eqref{eq: [X] is attractor of IFS} shows that the attractor of
	$\{\sigma^i\circ\sub\}_{i\in\{0,\ldots,\abs{\sub}-1\}}$ is $[X_{\sub}]$
	and that the iterated function system fulfills the SSC.
\end{proof}

For the next theorem recall that $0\leq c\leq C\leq\abs{\sub}$ and note that if
$c=\abs{\sub}$, then the substitution subshift $(X_{\sub},\sigma)$ is finite.
This is true because in this case the substitution $\sub$ has a unique fixed
point given by $(\ldots,\sub(a),\sub(a),\ldots)$ with $a\in A$ arbitrary.

\begin{thm}\label{thm:estimates ac for good discrete constant length subs}
	Suppose $(X_{\sub},\sigma)$ is infinite, has discrete spectrum and $c>0$.
	We have
	\[
		\oac\big(\left.\sigma\right|_{X_{\vartheta}}\big)
		\leq\frac{\log\abs{\sub}}{\log\abs{\sub}-\log(\abs{\sub}-c)}.
	\]
	Furthermore, if $C\neq\abs{\sub}$, then
	\[
		\uac\big(\left.\sigma\right|_{X_{\vartheta}}\big)
		\geq\frac{\log\abs{\sub}}{\log\abs{\sub}-\log(\abs{\sub}-C)}.
	\]
\end{thm}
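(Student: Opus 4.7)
The plan is to route the estimate through Theorem~\ref{thm: amorphic complexity and box dimension}, which reduces the problem to upper and lower box-dimension estimates on $[X_\sub]\subseteq([\Sigma],D)$. Write $\varphi_i=\sigma^i\circ\sub$ for $i=0,\ldots,\abs{\sub}-1$. Theorem~\ref{thm:discrete spectrum properties IFS} gives that $\{\varphi_i\}$ is an IFS satisfying the SSC with (compact) attractor $[X_\sub]$. Since $\sigma$ acts as an isometry on $([\Sigma],D)$, Proposition~\ref{prop: bounds on contraction rate} transfers to each $\varphi_i$ to yield the two-sided estimate
\[
    r^{-}\, D([x],[y])\leq D(\varphi_i([x]),\varphi_i([y]))\leq r^{+}\, D([x],[y])
\]
for all $[x],[y]\in[\Sigma]$, where $r^{+}=(\abs{\sub}-c)/\abs{\sub}$ and $r^{-}=(\abs{\sub}-C)/\abs{\sub}$.

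For the upper bound I would iterate the self-similarity identity $n$ times to write
\[
    [X_\sub]=\bigcup_{\mathbf{i}\in\{0,\ldots,\abs{\sub}-1\}^n}\varphi_{\mathbf{i}}([X_\sub]),\qquad \varphi_{\mathbf{i}}:=\varphi_{i_1}\circ\cdots\circ\varphi_{i_n}.
\]
Since $D\leq 1$ on $[\Sigma]$, each of the $\abs{\sub}^n$ pieces has $D$-diameter at most $(r^{+})^n$. Hence any $\eps$-separated subset of $[X_\sub]$ with $\eps>(r^{+})^n$ meets each piece in at most one point and so has cardinality at most $\abs{\sub}^n$. Choosing $n=n(\eps)$ with $(r^{+})^n<\eps\leq(r^{+})^{n-1}$ and letting $\eps\to 0$ yields $\overline\Dim_B([X_\sub])\leq\log\abs{\sub}/(\log\abs{\sub}-\log(\abs{\sub}-c))$.

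For the lower bound, assume $C<\abs{\sub}$, so $r^{-}>0$. Compactness of $[X_\sub]$ combined with the SSC supplies
\[
    \eps_0:=\min_{i\neq j}D(\varphi_i([X_\sub]),\varphi_j([X_\sub]))>0.
\]
Given distinct words $\mathbf{i},\mathbf{j}\in\{0,\ldots,\abs{\sub}-1\}^n$ with first differing coordinate $k$, the inclusions $\varphi_{\mathbf{i}}([X_\sub])\subseteq\varphi_{i_1}\circ\cdots\circ\varphi_{i_k}([X_\sub])$ and $\varphi_{\mathbf{j}}([X_\sub])\subseteq\varphi_{i_1}\circ\cdots\circ\varphi_{i_{k-1}}\circ\varphi_{j_k}([X_\sub])$ combined with $k-1$ applications of the lower Lipschitz bound through the common prefix $\varphi_{i_1}\circ\cdots\circ\varphi_{i_{k-1}}$ give
\[
    D(\varphi_{\mathbf{i}}([X_\sub]),\varphi_{\mathbf{j}}([X_\sub]))\geq(r^{-})^{k-1}\eps_0\geq(r^{-})^{n-1}\eps_0.
\]
Picking one point from each $\varphi_{\mathbf{i}}([X_\sub])$ then furnishes $\abs{\sub}^n$ points that are pairwise $((r^{-})^{n-1}\eps_0)$-separated, and the claimed lower bound on $\underline\Dim_B([X_\sub])$ follows on letting $n\to\infty$. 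The main bookkeeping point is exactly this last step: one must recognise that the \emph{first}-disagreement coordinate controls the cylinder-to-cylinder separation and then iterate the \emph{lower} Lipschitz bound (not the standard upper one) through the common prefix, thereby converting SSC at level one into a uniform geometric lower bound on pairwise distances across all $\abs{\sub}^n$ level-$n$ cylinders. Everything else is off-the-shelf from the IFS picture of Theorem~\ref{thm:discrete spectrum properties IFS} and the two-sided contraction estimate of Proposition~\ref{prop: bounds on contraction rate}.
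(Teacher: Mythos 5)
Your proposal is correct and follows essentially the same route as the paper: reduce to box dimension of $[X_\sub]$ via Theorem~\ref{thm: amorphic complexity and box dimension}, invoke Theorem~\ref{thm:discrete spectrum properties IFS} to realise $[X_\sub]$ as the SSC attractor of the IFS with the two-sided Lipschitz bounds from Proposition~\ref{prop: bounds on contraction rate}, and then apply the standard covering argument for the upper bound and a separation argument for the lower bound. The only difference is that where the paper cites \cite[Proposition~4.10]{RajalaVilppolainen2011} (a Hausdorff-dimension lower bound, which then bounds the box dimension from below), you prove the box-counting lower bound directly from the SSC and the common-prefix/first-disagreement argument; your inlined version is correct and self-contained.
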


In the proof of Theorem \ref{thm:estimates ac for good discrete constant length subs},
we make use  of the following statement.\footnote{Let us remark that, under the
assumptions of Proposition \ref{prop: IFS lower bound Hausdorff dim}
the statement in \cite{RajalaVilppolainen2011} actually yields an estimate
$\Dim_H(K)\geq\log\ell/(-\log s)$, where $\Dim_H(K)$ denotes the Hausdorff
dimension of the attractor $K$.
However, the Hausdorff dimension is always a lower bound for the box dimension.
For the definition of $\Dim_H$ and its relation to $\Dim_B$, see for instance
\cite{Pesin1997}.}

\begin{prop}[{\cite[Proposition 4.10]{RajalaVilppolainen2011}}]\label{prop: IFS lower bound Hausdorff dim}
	Let $(\varphi_i)_{i\in\{0,\ldots,\ell-1\}}$ be an IFS on a complete
	metric space $(M,\rho)$ which fulfills the SSC and
	\[
		\rho(\varphi_i(x),\varphi_i(y))\geq s\cdot\rho(x,y)
		\qquad (x,y\in M,i\in\{0,\ldots,\ell-1\}),
	\]
	for some $s\in(0,1)$.
	Suppose $K\subseteq M$ is the attractor of the IFS.
	Then
	\begin{align}\label{eq: box dimension instead of Hausdorff}
		\underline\Dim_B(K)\geq\log\ell/(-\log s).
	\end{align}
\end{prop}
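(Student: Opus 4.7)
The plan is to produce, for each scale of the form $s^{n-1}\delta_0$ with $\delta_0$ a positive constant coming from the SSC, an $\eps$-separated subset of $K$ of cardinality $\ell^n$, and then compare to the definition of $\underline\Dim_B$. The lower-Lipschitz hypothesis on the $\varphi_i$ is used only to prevent the images under long compositions from shrinking too rapidly.

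\textbf{Step 1 (a positive gap between level-one pieces).} First I would use that $K$ is compact (being the attractor of an IFS on a complete metric space) and that each $\varphi_i$ is continuous; hence each $\varphi_i(K)$ is compact. By the SSC, $\varphi_i(K)\cap\varphi_j(K)=\emptyset$ for $i\neq j$, so there exists
\[
\delta_0=\min_{i\neq j}\,\rho(\varphi_i(K),\varphi_j(K))>0.
\]

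\textbf{Step 2 (level-$n$ cylinders and their separation).} For a word $w=(i_1,\ldots,i_n)\in\{0,\ldots,\ell-1\}^n$ set $\varphi_w=\varphi_{i_1}\circ\cdots\circ\varphi_{i_n}$. Iterating the IFS equation $K=\bigcup_i\varphi_i(K)$ yields $K=\bigcup_w\varphi_w(K)$, and I claim that for two distinct words $w,w'$ whose first disagreement occurs at position $m$,
\[
\rho(\varphi_w(K),\varphi_{w'}(K))\geq s^{m-1}\delta_0.
\]
Indeed, the common prefix $u=(i_1,\ldots,i_{m-1})$ gives $\varphi_w(K)\subseteq \varphi_u(\varphi_{i_m}(K))$ and $\varphi_{w'}(K)\subseteq \varphi_u(\varphi_{i'_m}(K))$, and points in $\varphi_{i_m}(K)$ and $\varphi_{i'_m}(K)$ are at distance at least $\delta_0$ by Step~1. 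Applying the lower-Lipschitz bound $(m-1)$ times (once for each of the maps $\varphi_{i_1},\ldots,\varphi_{i_{m-1}}$) amplifies the distance by a factor of at least $s^{m-1}$, giving the claimed estimate.

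\textbf{Step 3 (counting separated points).} Fix any $p\in K$ and set $p_w=\varphi_w(p)\in\varphi_w(K)\subseteq K$. Then $\{p_w:w\in\{0,\ldots,\ell-1\}^n\}$ is a set of $\ell^n$ points and, since any two distinct words first disagree at some position $m\leq n$, Step~2 gives
\[
\rho(p_w,p_{w'})\geq s^{m-1}\delta_0\geq s^{n-1}\delta_0.
\]
Hence $M_{s^{n-1}\delta_0}(K)\geq \ell^n$.

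\textbf{Step 4 (passing from the discrete sequence of scales to the $\liminf$).} The function $\eps\mapsto M_\eps(K)$ is non-increasing, so for $\eps\in(s^n\delta_0,s^{n-1}\delta_0]$ we have $M_\eps(K)\geq \ell^n$ and $-\log\eps\leq -n\log s-\log\delta_0$. Therefore
\[
\frac{\log M_\eps(K)}{-\log\eps}\geq\frac{n\log\ell}{-n\log s-\log\delta_0}\xrightarrow{n\to\infty}\frac{\log\ell}{-\log s},
\]
which yields $\underline\Dim_B(K)\geq\log\ell/(-\log s)$.

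The argument is essentially routine once one has a clean separation estimate between level-$n$ cylinders; the only subtle point, and the one I would be careful about, is the direction in which the lower-Lipschitz hypothesis is used. It gives a \emph{lower} bound $\rho(\varphi_i(x),\varphi_i(y))\geq s\rho(x,y)$, so iterating $(m-1)$ compositions preserves at least a fraction $s^{m-1}$ of the original gap $\delta_0$; the maps are still contractions (so the pieces become small), but distances between distinct pieces do not collapse faster than $s^{n-1}\delta_0$ at depth $n$. This is exactly what turns the combinatorial count of $\ell^n$ cylinders into the desired dimensional lower bound.
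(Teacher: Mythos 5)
Your argument is correct, and it is worth noting that it is genuinely different from what the paper does: the paper offers no proof of this proposition at all. It imports \cite[Proposition~4.10]{RajalaVilppolainen2011} as a black box giving the \emph{Hausdorff}-dimension bound $\Dim_H(K)\geq\log\ell/(-\log s)$ and then, in the accompanying footnote, passes to \eqref{eq: box dimension instead of Hausdorff} via the general inequality $\Dim_H(K)\leq\underline\Dim_B(K)$. You instead prove the box-dimension bound directly and self-containedly: compactness of the attractor plus the SSC gives a positive gap $\delta_0$ between the first-level pieces, the lower-Lipschitz hypothesis propagates that gap to a separation of at least $s^{n-1}\delta_0$ between level-$n$ cylinders, and picking one point per cylinder produces $\ell^n$ points that are $s^{n-1}\delta_0$-separated, whence $M_{\eps}(K)\geq\ell^n$ for $\eps\leq s^{n-1}\delta_0$ and the claimed $\liminf$ follows by monotonicity of $\eps\mapsto M_\eps(K)$. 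Each step checks out (including the slightly delicate monotonicity comparison in Step~4, which is fine because $\log M_\eps(K)\geq n\log\ell\geq 0$ and both denominators are eventually positive). What your route buys is elementarity: it avoids the mass-distribution-type machinery needed for Hausdorff lower bounds on general complete metric spaces and delivers exactly the statement the paper uses. What the cited route buys is a strictly stronger conclusion (a Hausdorff-dimension lower bound), which is simply not needed here. Either way the proposition stands as used in the proof of Theorem~\ref{thm:estimates ac for good discrete constant length subs}.
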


\begin{proof}[Proof of Theorem \ref{thm:estimates ac for good discrete constant length subs}]
	By means of Theorem~\ref{thm: amorphic complexity and box dimension}, we
	obtain bounds on the upper and lower amorphic complexity of $(X_\sub,\sigma)$,
	by providing bounds on the upper and lower box dimension of $[X_\sub]$.
	Due to Theorem~\ref{thm:discrete spectrum properties IFS}, $[X_\sub]$ is the
	attractor of the IFS $\{\sigma^i\circ\sub\}_{i\in\{0,\ldots,\abs{\sub}-1\}}$.
	For that reason, we will obtain the above bounds as a result of the application
	of general estimates on the box dimension of attractors of IFS's.
	
	To that end, let us discuss a simple (and standard) technique to obtain
	an upper bound for the box dimension of an attractor $K$ of
	an IFS $(\varphi_i)_{i\in\{0,\ldots,\ell-1\}}$ on a general complete metric space
	$(M,\rho)$.
	Let $r\in(0,1)$ denote a common contraction rate for $\varphi_0,\ldots,\varphi_{\ell-1}$.
	Observe that a successive application of \eqref{eq:IFS condition attractor}
	yields that for all $n\in\N$ we have
	\begin{align}\label{eq: attractor ifs successive application of IFS}
		K=\bigcup\limits_{(i_0,\ldots,i_{n-1})\in\{0,\ldots,\ell-1\}^n}
		\varphi_{i_0}\circ\ldots\circ\varphi_{i_{n-1}}(K).
	\end{align}
	Clearly, $\diam(\varphi_{i_0}\circ\ldots\circ\varphi_{i_{n-1}}(K))\leq r^n\cdot\diam(K)$,
	where $\diam(E)$ denotes the diameter of a set $E\subseteq M$.
	Hence, \eqref{eq: attractor ifs successive application of IFS} implies that
	for each $n\in \N$ the attractor $K$ can be covered by $\ell^n$ sets of diameter
	less or equal $r^n\cdot\diam(K)$.
	This yields $\overline\Dim_B(K)\leq\log\ell/(-\log r)$.

	Under the assumptions (and using the same notation) of Proposition~\ref{prop: IFS lower bound Hausdorff dim}, 
	we altogether have the following general bounds
	\begin{align}\label{eq: IFS dimensions}
		\overline\Dim_B(K)\leq\log\ell/(-\log r)
		\qquad\textnormal{and}\qquad
		\underline\Dim_B(K)\geq\log\ell/(-\log s).
	\end{align}
	
	Now, for our iterated function system $\{\sigma^i\circ\sub\}_{i\in\{0,\ldots,\abs{\sub}-1\}}$
	we have $\ell=\abs{\sub}$.
	Further, due to Proposition~\ref{prop: bounds on contraction rate}, we get
	$r=(\abs{\sub}-c)/\abs{\sub}$ and $s=(\abs{\sub}-C)/\abs{\sub}$.
	The statement follows from \eqref{eq: IFS dimensions}.
\end{proof}

We want to close this section with an application of Theorem \ref{thm:estimates ac for good discrete constant length subs}
in the case of a binary alphabet.
Beforehand, we will relate its assumptions to well-established properties of
(constant length) substitutions and restate it accordingly, see
Theorem \ref{thm: general estimates ac for one-to-one subs with coincidence}.

Let us first define the \emph{height} of $\sub$ (see also \cite[Lemma 10]{Dekking1978}) as
\[
	h(\sub)=\max\{n\in\N : n\textnormal{ and }\abs{\sub}
		\textnormal{ are coprime and }\gamma(n)=n\}.
\]
We call $\sub$ \emph{pure} if $h(\sub)=1$.
Note that if the subshifts associated to two primitive constant length substitutions
of the same length are conjugate, then their heights agree.
We say the substitution $\sub$ admits a \emph{coincidence (of order $k$)} if there
exist $k\in\N$ and $0\leq j<\abs{\sub}^k$ such that $\#\{\sub^k(a)_j : a\in A\}=1$.

\begin{lem}\label{prop:contraction iff coincidence}
	The map $\sub^\ell:[\Sigma]\to[\Sigma]$ is a contraction for some $\ell\in\N$
	(i.e., $c(\sub^\ell)>0$) if and only if the substitution $\sub$ has a coincidence.
\end{lem}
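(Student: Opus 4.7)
The plan is to prove the two implications separately. The ``if'' direction is a quick consequence of Proposition~\ref{prop: contraction iff c>0} applied to a power of $\sub$, while the ``only if'' direction will rely on testing the contraction on constant sequences and extracting a coincidence via a union bound over pairs of letters.

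For the easy direction, I would argue as follows: if $\sub$ admits a coincidence of order $k$, then by definition there is $0\leq j<\abs{\sub}^k$ with $\#\{\sub^k(a)_j:a\in A\}=1$. In particular $\sub^k(a)_j=\sub^k(b)_j$ for all $a\neq b$ in $A$, which gives $c_{a,b}(\sub^k)\geq1$ and hence $c(\sub^k)>0$. Since $\sub^k$ is itself a constant length substitution (of length $\abs{\sub}^k$), Proposition~\ref{prop: contraction iff c>0} applied to $\sub^k$ in place of $\sub$ implies that $\sub^k\colon[\Sigma]\to[\Sigma]$ is a contraction, which is what we want.

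For the harder direction, assume $\sub^\ell\colon[\Sigma]\to[\Sigma]$ is a contraction. Proposition~\ref{prop: bounds on contraction rate} applied to $\sub^\ell$ gives that the contraction rate is at most $r\=(\abs{\sub}^\ell-c(\sub^\ell))/\abs{\sub}^\ell<1$, so $\sub^{m\ell}$ contracts by at most $r^m$ for every $m\in\N$. To produce a coincidence I would test this on constant sequences: for each $a\in A$ let $\overline{a}\=(\ldots,a,a,a,\ldots)\in\Sigma$. For $a\neq b$ one trivially has $D([\overline{a}],[\overline{b}])=1$, hence
\[
	D\bigl(\sub^{m\ell}([\overline{a}]),\sub^{m\ell}([\overline{b}])\bigr)\leq r^m.
\]
The key combinatorial observation is that $\sub^{m\ell}(\overline{a})$ is the infinite concatenation of the word $\sub^{m\ell}(a)$, hence periodic with period $\abs{\sub}^{m\ell}$, and likewise for $\overline{b}$. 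Consequently, the $\limsup$ in the definition of $D$ is actually a limit and equals the exact periodic average, so the above Besicovitch distance coincides with $1-c_{a,b}(\sub^{m\ell})/\abs{\sub}^{m\ell}$.

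Writing $B_{a,b}\ssq\{0,\ldots,\abs{\sub}^{m\ell}-1\}$ for the set of positions at which $\sub^{m\ell}(a)$ and $\sub^{m\ell}(b)$ differ, this gives $\#B_{a,b}\leq r^m\abs{\sub}^{m\ell}$, and a union bound over the $\binom{\#A}{2}$ unordered pairs yields
\[
	\#\bigcup_{a\neq b}B_{a,b}\leq\binom{\#A}{2}\,r^m\abs{\sub}^{m\ell},
\]
which is strictly less than $\abs{\sub}^{m\ell}$ once $m$ is large enough. The complement then contains some $j^*$ at which all values $\sub^{m\ell}(a)_{j^*}$ coincide, which is precisely a coincidence of $\sub$ of order $m\ell$. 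The only delicate point to verify carefully is the identification of the Besicovitch distance on periodic pairs with the normalized column count $1-c_{a,b}(\sub^{m\ell})/\abs{\sub}^{m\ell}$; once this is in place, the remainder is just iteration of the contraction rate from Proposition~\ref{prop: bounds on contraction rate} together with a routine union bound.
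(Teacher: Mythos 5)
Your proof is correct in both directions. The ``if'' direction is exactly the paper's easy step: a coincidence of order $k$ forces $c_{a,b}(\sub^k)\geq 1$ for every pair, hence $c(\sub^k)>0$, and Proposition~\ref{prop: contraction iff c>0} applied to the constant length substitution $\sub^k$ finishes; up to contraposition this is also the argument the paper actually writes out. Where you genuinely diverge is the ``only if'' direction. The paper dismisses it as a direct consequence of Proposition~\ref{prop: contraction iff c>0}, but for $\#A\geq 3$ the condition $c(\sub^\ell)>0$ only says that each \emph{pair} of letters agrees at \emph{some} position of $\sub^\ell$, possibly a different position for each pair, whereas a coincidence demands a single column where \emph{all} letters agree --- and these are not equivalent at a fixed order (one can write down a length-$3$ substitution on three letters with $c(\sub)=1$ and no order-$1$ coincidence). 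Your argument supplies exactly the missing quantitative step: iterating the contraction on the constant sequences $\overline{a}$, identifying $D\bigl(\sub^{m\ell}(\overline{a}),\sub^{m\ell}(\overline{b})\bigr)$ with the normalized column count $1-c_{a,b}(\sub^{m\ell})/\abs{\sub}^{m\ell}$ (legitimate, since both sequences are $\abs{\sub}^{m\ell}$-periodic and aligned at the origin, so the $\limsup$ is a genuine periodic average), and then applying the union bound over the $\binom{\#A}{2}$ pairs to find a common column of agreement once $\binom{\#A}{2}\,r^m<1$. This yields a coincidence of order $m\ell$ and is a complete, self-contained proof given Proposition~\ref{prop: bounds on contraction rate}; an alternative, purely combinatorial route would be Dekking-style monotonicity of the minimal column size of $\sub^k$, but your metric argument fits the paper's framework more naturally and is, if anything, more careful than the published one-liner.
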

\begin{proof}
	Both directions are direct consequences of Proposition~\ref{prop: contraction iff c>0}.
	We just show the ``only if''-part.
	To that end, assume for a contradiction that
	$\sub^\ell$ is not a contraction for any $\ell\in \N$.
	Due to Proposition \ref{prop: contraction iff c>0}, this implies
	\[
		\forall\ell\in \N \, \exists a\neq b\: c_{a,b}(\sub^\ell)=0
	\]
	so that $\sub$ does not have a coincidence.
\end{proof}

In view of the previous statement, the next assertion emphasizes that the IFS approach
is tailor-made for constant length substitution subshifts with discrete spectrum.

\begin{thm}[{\cite[Theorems 4 and 7]{Dekking1978}}]\label{thm: pure sub discrete iff coincidence}
	Suppose $\sub$ is pure.
	Then $\sub$ has a coincidence if and only if $(X_{\sub},\sigma)$ has discrete
	spectrum.
\end{thm}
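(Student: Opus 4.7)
The plan is to relay the assertion through the continuity criterion for the Besicovitch quotient. By Corollary \ref{cor:discrete spectrum iff finite sep numbers} together with Proposition \ref{prop: equivalence D_delta and D_B and MEF}, $(X_\sub,\sigma)$ has discrete spectrum if and only if the map $[\cdot]|_{X_\sub}\colon X_\sub \to ([\Sigma],D)$ is continuous. It therefore suffices to prove, under the purity hypothesis, that $\sub$ has a coincidence if and only if $[\cdot]|_{X_\sub}$ is continuous.

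For the direction coincidence $\Rightarrow$ continuity, I would assume without loss of generality that $X_\sub$ is infinite (the finite case being trivial). If $\sub$ has a coincidence of order $k$, Lemma \ref{prop:contraction iff coincidence} yields that $\sub^k\colon[\Sigma]\to[\Sigma]$ is a contraction, with some rate $r<1$. The main point is that purity, combined with $\gamma(|\sub|)=|\sub|$ from \cite{Dekking1978}, should force $\gamma(|\sub|^m)=|\sub|^m$ for every $m\geq 1$ by induction; iterating \eqref{eq:cyclic shift-minimal partition} then provides the cyclic $\sigma^{|\sub|^m}$-minimal partition $X_\sub=\bigsqcup_{i=0}^{|\sub|^m-1}(\sigma^i\circ\sub^m)(X_\sub)$. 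Given $x_n\to x$ in $X_\sub$, for each fixed $m$ and all sufficiently large $n$, both $x_n$ and $x$ must lie in a common cell, so they admit representations $x_n=\sigma^{i_m}\sub^m(y_n)$ and $x=\sigma^{i_m}\sub^m(y)$ with $y_n,y\in X_\sub$. Using that $\sigma$ is an isometry on $[\Sigma]$ and that $\sub^m$ contracts with rate at most $r^{\lfloor m/k\rfloor}$, I would obtain $\limsup_n D(x_n,x)\leq r^{\lfloor m/k\rfloor}$, and letting $m\to\infty$ delivers continuity.

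For the converse I would argue contrapositively. If $\sub$ has no coincidence, Lemma \ref{prop:contraction iff coincidence} says no iterate $\sub^\ell$ is a contraction, so for every $\ell$ there exist letters $a_\ell\neq b_\ell\in A$ with $\sub^\ell(a_\ell)$ and $\sub^\ell(b_\ell)$ disagreeing at every position. Finiteness of $A$ extracts a fixed pair $(a,b)$ valid for infinitely many $\ell$. Using minimality of $(X_\sub,\sigma)$ I would then construct a sequence $x_n\to x$ in $X_\sub$ such that, along these $\ell$'s, the points $x_n$ and $x$ agree on longer and longer central windows yet asymptotically disagree on a set of positions of positive density coming from the ``bad'' columns generated by the pair $(a,b)$. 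This would force $\liminf_n D(x_n,x)>0$, violating continuity of $[\cdot]|_{X_\sub}$.

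The main obstacle is this second direction: turning the combinatorial absence of coincidence into a uniformly positive lower bound on the Besicovitch distance between approximating pairs requires a delicate density estimate across the self-similar hierarchy generated by $\sub$. This is essentially the substance of the two Dekking theorems cited in the statement, and I would rely on \cite{Dekking1978} for the detailed combinatorics.
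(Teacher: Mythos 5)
First, note that the paper itself offers no proof of this statement: it is imported verbatim from \cite{Dekking1978} (Theorems 4 and 7), so any argument you give is necessarily an independent one rather than a variant of the paper's. Your reduction of ``discrete spectrum'' to continuity of $[\cdot]|_{X_\sub}$ via Corollary \ref{cor:discrete spectrum iff finite sep numbers} and Proposition \ref{prop: equivalence D_delta and D_B and MEF} is legitimate (no circularity: those results rest on Host's theorem and mean equicontinuity, not on Dekking's coincidence criterion), and your forward implication is essentially complete. Indeed, applying \eqref{eq:cyclic shift-minimal partition} to the substitution $\sub^m$ (which is primitive, of length $\abs{\sub}^m$, with $X_{\sub^m}=X_\sub$) gives the level-$m$ partition directly, without any induction or use of purity; the cells are compact, hence clopen, so a convergent sequence eventually shares the cell of its limit, and the isometry of $\sigma$ together with Proposition \ref{prop: bounds on contraction rate} and $D\leq 1$ gives $\limsup_n D(x_n,x)\leq r^{\lfloor m/k\rfloor}$ for every $m$. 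This half is a genuinely self-contained alternative to citing Dekking.

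The converse, however, is where the entire content of the theorem lives, and your proposal does not prove it. The concrete gap is the passage from ``$\sub^\ell(a)$ and $\sub^\ell(b)$ disagree at every position in $[0,\abs{\sub}^\ell)$'' to the existence of a sequence $x_n\to x$ in $X_\sub$ (convergence in the product topology, i.e.\ agreement on arbitrarily long central blocks) with $\liminf_n D(x_n,x)>0$. Nothing in the absence of coincidence controls the coordinates of $\sub^\ell(y)$ and $\sub^\ell(z)$ outside the images of the disagreeing letters, nor does it let you arrange that two such points agree on a long window around the origin while keeping a positive asymptotic density of disagreements; minimality alone does not produce such pairs. Moreover, deferring ``the detailed combinatorics'' to \cite{Dekking1978} is circular here: Dekking's Theorem 7 \emph{is} the equivalence being proved, and his actual argument for the hard direction is spectral (via the column number and the structure of the correlation measures), not a density estimate of the kind you sketch. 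As it stands, your proposal establishes ``coincidence $\Rightarrow$ discrete spectrum'' but only restates, rather than proves, the reverse implication.
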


Now, a substitution $\sub$ is said to be \emph{one-to-one} if it is one-to-one as a
mapping from $A$ to $\nefw$, that is, $\sub(a)\neq\sub(b)$ whenever $a\neq b\in A$.
Note that if $\sub$ is one-to-one, then $C\neq\abs{\sub}$ and $\sub^k$ is also
one-to-one for each $k\in\N$.

\begin{thm}\label{thm: general estimates ac for one-to-one subs with coincidence}
	Assume that $\sub$ is one-to-one and has a coincidence of order $k$ as well
	as that $(X_{\sub},\sigma)$ is infinite.
	Setting $\Theta=\sub^k$, we have
	\[
		1\leq\frac{\log\abs{\Theta}}{\log\abs{\Theta}-\log(\abs{\Theta}-C(\Theta))}
		\leq\uac\big(\left.\sigma\right|_{X_{\sub}}\big)
		\leq\oac\big(\left.\sigma\right|_{X_{\sub}}\big)
		\leq\frac{\log\abs{\Theta}}{\log\abs{\Theta}-\log(\abs{\Theta}-c(\Theta))}<\infty.
	\]
\end{thm}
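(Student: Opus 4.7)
The plan is to apply Theorem \ref{thm:estimates ac for good discrete constant length subs} to the iterate $\Theta=\sub^k$. First I would observe that $X_\Theta=X_\sub$ as subsets of $\Sigma$: any $\sub$-periodic point $x_0$, with $\sub^p(x_0)=x_0$, is automatically $\Theta$-periodic via $\Theta^p(x_0)=\sub^{kp}(x_0)=x_0$, and both substitutive subshifts are defined as the shift orbit closure of such a point. Consequently $(X_\Theta,\sigma)$ is infinite, and the amorphic complexities for $\sub$ and for $\Theta$ coincide tautologically. The hypothesis that $\sub$ has a coincidence of order $k$ is by definition a direct coincidence of $\Theta$, so $c_{a,b}(\Theta)\geq 1$ for all $a\neq b$, giving $c(\Theta)>0$. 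A short induction transfers one-to-oneness of $\sub$ to $\Theta$: if $a\neq b$ and $\sub^{k-1}(a),\sub^{k-1}(b)$ first differ at position $i$, then the $i$-th length-$|\sub|$ block of $\Theta(a)$ differs from that of $\Theta(b)$, hence $\Theta(a)\neq\Theta(b)$, so $c_{a,b}(\Theta)\leq|\Theta|-1$ and thus $C(\Theta)<|\Theta|$.

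The principal obstacle is proving the remaining hypothesis that $(X_\Theta,\sigma)$ has discrete spectrum. I would establish this by a soft compactness argument rather than via the Dekking pure-base machinery. Since $c(\Theta)>0$, Proposition \ref{prop: contraction iff c>0} makes $\{\sigma^i\circ\Theta\}_{i=0}^{|\Theta|-1}$ an IFS on $([\Sigma],D)$ with a unique compact attractor $K$. The cyclic $\sigma^{|\Theta|}$-minimal partition \eqref{eq:cyclic shift-minimal partition}, together with the fact (noted before Proposition \ref{prop: equivalence D_delta and D_B and MEF}) that $\sigma$ and $\Theta$ commute with $[\cdot]$, yields
\[
[X_\Theta]=\bigcup_{i=0}^{|\Theta|-1}(\sigma^i\circ\Theta)([X_\Theta]).
\]
Iterating this invariance, any $[x]\in[X_\Theta]$ can be written as $(\sigma^{i_0}\circ\Theta)\circ\cdots\circ(\sigma^{i_{n-1}}\circ\Theta)([y_n])$ for some $[y_n]\in[X_\Theta]$; comparing with $(\sigma^{i_0}\circ\Theta)\circ\cdots\circ(\sigma^{i_{n-1}}\circ\Theta)([z])$ for an arbitrary $[z]\in K$ (which lies in $K$ by invariance) shows, via Proposition \ref{prop: bounds on contraction rate} and the fact that $\sigma$ is an isometry on $([\Sigma],D)$, that $[x]$ is within $((|\Theta|-c(\Theta))/|\Theta|)^n$ of $K$ (using $D\leq 1$). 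Since $K$ is closed, $[X_\Theta]\subseteq K$, so $[X_\Theta]$ is totally bounded. Proposition \ref{prop: finite sep numbers iff projection totally bounded} then gives finite separation numbers for $(X_\Theta,\sigma)$, and Corollary \ref{cor:discrete spectrum iff finite sep numbers} upgrades this to discrete spectrum.

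With all three hypotheses of Theorem \ref{thm:estimates ac for good discrete constant length subs} verified for $\Theta$, that theorem directly produces the two middle inequalities of the claimed chain. The outer bounds are purely arithmetic: $C(\Theta)\leq|\Theta|-1$ forces $\log(|\Theta|-C(\Theta))\geq 0$ and hence
\[
\frac{\log|\Theta|}{\log|\Theta|-\log(|\Theta|-C(\Theta))}\geq 1,
\]
while $c(\Theta)>0$ forces $\log|\Theta|-\log(|\Theta|-c(\Theta))>0$, making the upper bound finite.
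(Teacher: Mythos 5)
Your argument is correct and arrives at the two middle inequalities exactly as the paper does, namely by verifying the hypotheses of Theorem \ref{thm:estimates ac for good discrete constant length subs} for $\Theta=\sub^k$ and then using $X_\Theta=X_\sub$. Where you genuinely depart from the paper is in how you secure the discrete-spectrum hypothesis. The paper's proof only records $c(\Theta)>0$, $C(\Theta)<\abs{\Theta}$ and primitivity and then cites Theorem \ref{thm:estimates ac for good discrete constant length subs}; implicitly it leans on the fact that a coincidence forces $\sub$ to be pure, so that Theorem \ref{thm: pure sub discrete iff coincidence} (Dekking) delivers discrete spectrum --- a step the paper leaves tacit. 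You bypass Dekking entirely: from $c(\Theta)>0$ you obtain an IFS on the complete space $([\Sigma],D)$ with a compact attractor $K$, iterate the covering identity $[X_\Theta]=\bigcup_{i}(\sigma^i\circ\Theta)([X_\Theta])$ together with the contraction estimate of Proposition \ref{prop: bounds on contraction rate} and $D\leq 1$ to conclude $[X_\Theta]\subseteq K$, hence total boundedness, hence finite separation numbers by Proposition \ref{prop: finite sep numbers iff projection totally bounded}, hence discrete spectrum by Corollary \ref{cor:discrete spectrum iff finite sep numbers}. This is a clean, self-contained replacement for the missing step; its only cost is that it essentially reproves the attractor identification of Theorem \ref{thm:discrete spectrum properties IFS} along the way. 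The remaining verifications (a coincidence of order $k$ for $\sub$ is a coincidence of order $1$ for $\Theta$, injectivity passes to $\Theta=\sub^k$ so $C(\Theta)\leq\abs{\Theta}-1$, and the outer numerical bounds) coincide with the paper's.
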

\begin{proof}
	First, observe that $\Theta:[\Sigma]\to[\Sigma]$ is a contraction, since it
	has a coincide, that is, $0<c(\Theta)$.
	Further, $C(\Theta)<\abs{\Theta}$ because $\Theta$ inherits the
	injectivity of $\sub$.
	Moreover, the substitution $\Theta$ is clearly primitive.
	By means of Theorem \ref{thm:estimates ac for good discrete constant length subs},
	we obtain the above bounds for the lower and upper amorphic complexity of
	$\left.\sigma\right|_{X_\Theta}$.
	Now, since periodic points of $\sub$ are also periodic points of $\Theta$,
	we have $X_\sub=X_{\Theta}$.
	The statement follows.
\end{proof}

If $\sub$ is not one-to-one, we can make use of the following statement by
Blanchard, Durand and Maass.

\begin{lem}[{\cite[Section 2.2]{BlanchardDurandMaass2004}}]\label{lem: one-to-one sub}
	Suppose $\sub$ is not one-to-one and $(X_{\sub},\sigma)$ is infinite.
	Then there exists a primitive constant length substitution $\sub':A'\to (A')^+$
	such that $\sub'$ is one-to-one, $\abs{\sub'}=\abs{\sub}$
	and $(X_{\sub'},\sigma)$ is conjugate to $(X_{\sub},\sigma)$.
	In particular, in case that $\sub$ is pure, then $\sub'$ is pure as well.
\end{lem}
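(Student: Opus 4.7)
The plan is to build $\sub'$ as a quotient of $\sub$ by identifying letters with eventually coinciding substitution images. Define an equivalence relation on $A$ by setting $a\sim b$ whenever $\sub^k(a)=\sub^k(b)$ for some $k\in\N_0$; reflexivity, symmetry and transitivity (the latter by taking the larger of the two exponents, as $\sub^k(a)=\sub^k(b)$ entails $\sub^{k+1}(a)=\sub^{k+1}(b)$) are routine. Set $A'=A/{\sim}$, write $[\cdot]$ for the quotient map and define $\sub'\colon A'\to (A')^+$ letterwise by $\sub'([a])=[\sub(a)_0][\sub(a)_1]\cdots[\sub(a)_{\abs{\sub}-1}]$, which has constant length $\abs{\sub}$ by construction.

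For well-definedness, if $a\sim b$ is witnessed by some $k\geq 1$, then decomposing $\sub^k(a)=\sub^{k-1}(\sub(a))$ into the $\abs{\sub}$ blocks coming from the letters of $\sub(a)$ (and likewise for $b$) shows $\sub^{k-1}(\sub(a)_i)=\sub^{k-1}(\sub(b)_i)$ for each $i$, hence $\sub(a)_i\sim\sub(b)_i$. Conversely, if $\sub'([a])=\sub'([b])$, then $\sub(a)_i\sim\sub(b)_i$ for every $i$; taking a uniform exponent $k$ (possible since $A$ is finite) and concatenating gives $\sub^{k+1}(a)=\sub^{k+1}(b)$, so $\sub'$ is one-to-one. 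Primitivity of $\sub'$ is inherited from $\sub$ via surjectivity of $[\cdot]$.

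Let $\pi\colon A^{\Z}\to (A')^{\Z}$ be the letterwise extension of $[\cdot]$. A direct computation gives $\pi\circ\sub=\sub'\circ\pi$, so if $x_0$ is a $\sub$-periodic point, $\pi(x_0)$ is $\sub'$-periodic, and $\pi(X_\sub)=X_{\sub'}$ by continuity and minimality. The key point is injectivity of $\pi\big|_{X_\sub}$. If $\pi(x)=\pi(y)$, then $x_i\sim y_i$ for all $i\in\Z$; finiteness of $A$ yields a uniform $K\in\N$ with $\sub^K(x_i)=\sub^K(y_i)$ for every $i$, whence $\sub^K(x)=\sub^K(y)$ in $A^{\Z}$. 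At this point I would invoke the recognisability theorem for primitive aperiodic constant length substitutions (Dekking, or Mossé in greater generality), which applies because $(X_\sub,\sigma)$ being infinite forces aperiodicity: $\sub\colon X_\sub\to\sub(X_\sub)$ is a homeomorphism, so $\sub^K$ is injective on $X_\sub$, giving $x=y$. Combined with compactness, $\pi\big|_{X_\sub}$ is a conjugacy between $(X_\sub,\sigma)$ and $(X_{\sub'},\sigma)$.

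Finally, since the height $h(\sub)=\max\{n : \gcd(n,\abs{\sub})=1,\ \gamma(n)=n\}$ depends only on $\abs{\sub}$ and the numbers $\gamma(n)$, which are invariants of the topological dynamical system $(X_\sub,\sigma)$, and since $\abs{\sub'}=\abs{\sub}$ with $(X_{\sub'},\sigma)$ conjugate to $(X_\sub,\sigma)$, we conclude $h(\sub')=h(\sub)$; in particular, purity is preserved. The main obstacle throughout is the injectivity of $\pi$: all the algebraic manipulations amount to bookkeeping, but reducing $\sub^K(x)=\sub^K(y)$ to $x=y$ requires the non-trivial recognisability input, which hinges on the aperiodicity hypothesis encoded in $(X_\sub,\sigma)$ being infinite.
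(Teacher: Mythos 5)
The paper itself offers no proof of this lemma --- it is quoted from Blanchard--Durand--Maass, and your construction (identify $a\sim b$ whenever $\sub^k(a)=\sub^k(b)$ for some $k$, pass to the quotient alphabet and the induced substitution) is precisely the one carried out in the cited Section~2.2, so there is no divergence of method to report. Your bookkeeping is correct throughout: well-definedness and injectivity of $\sub'$, preservation of primitivity and of the length, the intertwining $\pi\circ\sub=\sub'\circ\pi$, the identity $\pi(X_\sub)=X_{\sub'}$ via minimality, and the purity claim via the observation (stated explicitly in the paper just before the notion of coincidence is introduced) that the height is determined by $\abs{\sub}$ together with the conjugacy invariants $\gamma(n)$.

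The one step you should not wave through is the recognizability input, and the difficulty there is sharper than your parenthetical suggests. You need that $\sub^K(x)=\sub^K(y)$ with $x,y\in X_\sub$ forces $x=y$, i.e.\ injectivity of $\sub$ on $X_\sub$, for a substitution that is \emph{by hypothesis not injective on letters}. Moss\'e's bilateral recognizability theorem, in its classical formulation, recovers the cutting points of a point of $\sub(X_\sub)$ into blocks $\sub(a)$; when two distinct letters share the same image, knowing the cutting points does not by itself determine which preimage letter sits over each block, and the standard statements (e.g.\ the treatment in Queff\'elec's book) are given for letter-injective substitutions --- exactly the case excluded here. The strong form you need (unique desubstitution including the letters, hence injectivity of $\sub$ on $X_\sub$, for every aperiodic primitive morphism with no injectivity hypothesis) is true, but the clean reference is Berth\'e--Steiner--Thuswaldner--Yassawi rather than Dekking or Moss\'e; alternatively one can argue it directly in the constant length setting as Blanchard--Durand--Maass do. As written, your citation on the crux step risks circularity; with the correct reference the proof is complete.
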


In the situation of the previous statement, Corollary \ref{cor: invariance amorphic complexity}
yields that
\begin{align}\label{eq: equiality of ac for one-to-one sub}
	\uac\big(\left.\sigma\right|_{X_{\sub}}\big)=\uac\big(\left.\sigma\right|_{X_{\sub'}}\big)
	\quad\textnormal{and}\quad
	\oac\big(\left.\sigma\right|_{X_{\sub}}\big)=\oac\big(\left.\sigma\right|_{X_{\sub'}}\big).
\end{align}
In principle, this allows us to compute the amorphic complexity by the above methods
even if $\sub$ is not one-to-one. We make use of this fact in the next section.

Finally, we take a look at substitutions over two symbols and establish a
closed formula for the amorphic complexity of the associated subshifts.

\begin{cor}\label{cor: general formula ac for subs over two symbols}
	Let $\vartheta:\{0,1\}\to\{0,1\}^+$.
	Suppose $(X_{\sub},\sigma)$ is infinite and has discrete spectrum.
	Then
	\begin{equation*}
		\ac\big(\left.\sigma\right|_{X_{\sub}}\big)
		=\frac{\log\abs{\sub}}{\log\abs{\sub}-\log\abs{\sub}_\ast},
	\end{equation*}
	where $\abs{\sub}_\ast\in (0,\abs{\sub})$ denotes the number of positions where $\sub(0)$
	and $\sub(1)$ differ.
\end{cor}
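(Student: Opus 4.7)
The strategy is to reduce everything to Theorem~\ref{thm: general estimates ac for one-to-one subs with coincidence} and to exploit the trivial observation that in a two-letter alphabet there is only a single pair $a\neq b$ in $A=\{0,1\}$. Consequently, for any constant length substitution $\Theta$ on $A$ one has $c(\Theta)=C(\Theta)=c_{0,1}(\Theta)=|\Theta|-|\Theta|_\ast$, so that the upper and lower estimates in Theorem~\ref{thm: general estimates ac for one-to-one subs with coincidence} will automatically coincide. Hence the plan is: verify the hypotheses of that theorem and then compute both bounds explicitly.

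First I would verify the hypotheses. The substitution $\sub$ must be one-to-one: indeed, if $\sub(0)=\sub(1)$ then every $\sub$-fixed point is periodic of period dividing $|\sub|$, forcing $X_\sub$ to be finite and contradicting our assumption; in particular $|\sub|_\ast\geq 1$. Next, $\sub$ must admit a coincidence of some order $k$. This is where discrete spectrum enters: by the classical theorem of Dekking (the pure case is Theorem~\ref{thm: pure sub discrete iff coincidence} in the excerpt), discrete spectrum of $(X_\sub,\sigma)$ is equivalent to the existence of a coincidence. For binary alphabets the non-pure situation can also be handled directly, since by the inductive identity below, the absence of any coincidence is equivalent to $|\sub|_\ast=|\sub|$, i.e.\ $\sub(0)$ and $\sub(1)$ are letter-wise complementary — a case well known to produce partly continuous spectrum (and thus excluded by hypothesis).

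Next I would prove the key inductive identity
\[
|\sub^{k+1}|_\ast=|\sub^k|_\ast\cdot|\sub|_\ast,
\]
which yields $|\sub^k|_\ast=|\sub|_\ast^{\,k}$ and $|\sub^k|=|\sub|^k$. The argument is elementary: at every position $i$ where $\sub^k(0)_i=\sub^k(1)_i$, applying $\sub$ produces an entire block of length $|\sub|$ in which $\sub^{k+1}(0)$ and $\sub^{k+1}(1)$ agree, while at every position $i$ with $\sub^k(0)_i\neq\sub^k(1)_i$ we obtain exactly $|\sub|_\ast$ disagreements in the corresponding block. Summing over $i$ gives the claim.

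With $\Theta=\sub^k$, both the upper and the lower bound provided by Theorem~\ref{thm: general estimates ac for one-to-one subs with coincidence} become
\[
\frac{\log|\Theta|}{\log|\Theta|-\log\bigl(|\Theta|-c(\Theta)\bigr)}
=\frac{\log|\sub|^k}{\log|\sub|^k-\log|\sub|_\ast^{\,k}}
=\frac{\log|\sub|}{\log|\sub|-\log|\sub|_\ast},
\]
which is the desired closed formula. The only non-routine step is the verification that discrete spectrum forces a coincidence in the possibly non-pure binary case; this is the main obstacle, and it is handled either by appealing to Dekking's general theorem or by the direct observation that for two letters, the only obstruction to a first-order coincidence already produces continuous spectral components.
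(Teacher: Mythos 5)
Your overall strategy is the same as the paper's: verify the hypotheses of Theorem~\ref{thm: general estimates ac for one-to-one subs with coincidence} and observe that for a two-letter alphabet $c=C=\abs{\sub}-\abs{\sub}_\ast$, so the two bounds collapse to the closed formula. Your one-to-one argument and the multiplicative identity $\abs{\sub^{k}}_\ast=\abs{\sub}_\ast^{\,k}$ are both correct (the paper avoids the latter by noting that, for two letters, any coincidence forces a coincidence of order $1$, so one can take $k=1$ outright; your computation with general $k$ is a harmless detour).

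The genuine gap is in the step you yourself flag as the main obstacle: producing a coincidence from discrete spectrum without knowing that $\sub$ is pure. Neither of your two proposed resolutions is sound as stated. ``Dekking's general theorem'' does \emph{not} say that discrete spectrum is equivalent to a coincidence of $\sub$ itself; in the non-pure case the correct statement involves a coincidence of the \emph{pure base} of $\sub$ (cf.\ Lemma~\ref{lem: pure base}), and transferring a coincidence of the pure base back to one of $\sub$ is precisely the kind of work you are trying to avoid. Your alternative route --- ``letter-wise complementary substitutions are well known to have partly continuous spectrum'' --- is, for pure substitutions, nothing other than Theorem~\ref{thm: pure sub discrete iff coincidence} again, and for non-pure ones it is an unproved assertion; invoking it here is circular. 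The missing idea, which the paper supplies, is that the non-pure case simply cannot occur: $1\leq h(\sub)\leq\#A=2$, and $h(\sub)=\#A$ forces $(X_{\sub},\sigma)$ to be finite (\cite[Remark 9(i) \& (iii)]{Dekking1978}), so infiniteness gives $h(\sub)=1$, i.e.\ $\sub$ is pure, and Theorem~\ref{thm: pure sub discrete iff coincidence} applies directly. With that one observation inserted, your argument closes up and the rest goes through.
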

\begin{proof}
	First, observe that $c=C=\abs{\sub}-\abs{\sub}_\ast$.
	Since $(X_{\sub},\sigma)$ is infinite, $\sub$ is one-to-one and
	$\abs{\sub}_\ast>0$ (see the short discussion before
	Theorem \ref{thm:estimates ac for good discrete constant length subs}).
	Furthermore, we claim that $\sub$ is pure.
	This holds because $1\leq h(\sub)\leq\#A=2$ and since $h(\sub)=\#A$ implies
	that $(X_{\sub},\sigma)$ is finite, see \cite[Remark 9(i) \& (iii)]{Dekking1978}.
	Therefore, $\sub$ has a coincidence, according to Theorem \ref{thm: pure sub discrete iff coincidence}.
	Note that this coincidence must be of order $1$ and hence, $\abs{\sub}_\ast<\abs{\sub}$.
	Finally, we obtain the desired formula for the amorphic complexity from the
	general estimates provided in 
	Theorem \ref{thm: general estimates ac for one-to-one subs with coincidence}.
\end{proof}


\subsection{Finiteness and positivity of amorphic complexity}\label{sec: finiteness and positivity ac}

We finally turn to general constant length substitutions whose associated subshifts
have discrete spectrum.
It is an interesting observation that Corollary \ref{cor: general formula ac for subs over two symbols}
implies that the amorphic complexity of infinite subshifts with discrete spectrum
associated to binary substitutions is always finite and bounded from below by one.
It turns out that this fact holds true in general.

\begin{thm}\label{thm: general lower and upper bounds for ac}
	Suppose $(X_{\sub},\sigma)$ is infinite and has discrete spectrum.
	Then
	\begin{equation*}
		1\leq\uac\big(\left.\sigma\right|_{X_{\sub}}\big)
		\leq\oac\big(\left.\sigma\right|_{X_{\sub}}\big)<\infty.
	\end{equation*}
\end{thm}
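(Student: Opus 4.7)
The plan is to reduce the general problem to the case of a pure, one-to-one primitive constant length substitution, where Theorem~\ref{thm: general estimates ac for one-to-one subs with coincidence} applies directly, and then to bootstrap from the pure to the general case via Dekking's pure base construction.

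First, by Lemma~\ref{lem: one-to-one sub} together with \eqref{eq: equiality of ac for one-to-one sub}, I may replace $\sub$ by a one-to-one substitution without changing the amorphic complexity, and this reduction preserves pureness. If $\sub$ is already pure, then Theorem~\ref{thm: pure sub discrete iff coincidence} furnishes a coincidence of $\sub$ (since $X_\sub$ has discrete spectrum), and Theorem~\ref{thm: general estimates ac for one-to-one subs with coincidence} immediately yields the desired bounds: the lower bound of $1$ follows from $C(\Theta)<\abs{\Theta}$, which in turn follows from the injectivity of $\Theta=\sub^k$, while finiteness of the upper bound follows from $c(\Theta)\geq 1$, a consequence of the coincidence.

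For $\sub$ with height $h=h(\sub)\geq 2$, I will invoke Dekking's pure base construction from \cite{Dekking1978}: the cyclic $\sigma^h$-minimal partition $X_\sub=\bigsqcup_{i=0}^{h-1} X_i$ has the property that each $(X_i,\sigma^h)$ is conjugate to the subshift $(X_{\tilde\sub},\sigma)$ of a pure, primitive, constant length substitution $\tilde\sub$, which inherits both infiniteness and discrete spectrum from $X_\sub$. Applying the pure case to $\tilde\sub$ gives $1\leq\uac(\sigma|_{X_{\tilde\sub}})\leq\oac(\sigma|_{X_{\tilde\sub}})<\infty$. To transfer this back to $X_\sub$, I pass to the Besicovitch space: Lemma~\ref{lem: shift-minimal partition} yields $[X_\sub]=\bigsqcup_{i=0}^{h-1}[X_i]$, and Corollary~\ref{cor:discrete spectrum iff finite sep numbers} together with Proposition~\ref{prop: equivalence D_delta and D_B and MEF} ensures that $[\cdot]$ is continuous on $X_\sub$, so the compact sets $[X_i]$ are pairwise at positive distance. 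Consequently, the upper and lower box dimensions of $[X_\sub]$ each equal the maximum of the corresponding dimensions of the $[X_i]$. Identifying the box dimensions of $[X_i]$ with those of $[X_{\tilde\sub}]$ via the bi-Lipschitz lift of the conjugacy from Lemma~\ref{lem: conjugacy between power shifts is bi-Lipschitz}, and translating back to amorphic complexity via Theorem~\ref{thm: amorphic complexity and box dimension}, the claim follows.

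The main obstacle I foresee is citing the correct form of Dekking's structure theorem and verifying that each piece of the cyclic $\sigma^h$-minimal partition is indeed $\sigma^h$-conjugate to a pure constant length substitution subshift preserving the hypotheses of infiniteness and discrete spectrum. Once this is available, the remainder is essentially a repackaging of the pure case, combining the bi-Lipschitz invariance of box dimension in the Besicovitch space with the elementary fact that the box dimension of a finite disjoint union of compact sets at mutually positive distance equals the maximum of the box dimensions of the pieces.
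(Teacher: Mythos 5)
Your strategy coincides with the paper's: the pure case is handled via Lemma~\ref{lem: one-to-one sub}, Theorem~\ref{thm: pure sub discrete iff coincidence} and Theorem~\ref{thm: general estimates ac for one-to-one subs with coincidence}, and the non-pure case is reduced to the pure base through the cyclic $\sigma^{h(\sub)}$-minimal partition, Lemma~\ref{lem: shift-minimal partition}, the bi-Lipschitz lift of the conjugacy from Lemma~\ref{lem: conjugacy between power shifts is bi-Lipschitz}, and Theorem~\ref{thm: amorphic complexity and box dimension}. There is, however, one step whose stated justification is wrong: you claim that the lower box dimension of a finite disjoint union of compact sets at mutually positive distance equals the maximum of the lower box dimensions of the pieces. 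This is correct for the \emph{upper} box dimension (and there no separation is even needed), but the lower box dimension is not finitely stable: if the covering numbers of two pieces attain their $\liminf$ along different sequences of scales, the lower box dimension of the union can strictly exceed the maximum of the two, and keeping the pieces at positive distance does nothing to prevent this. So "elementary fact" is not a fact for $\underline\Dim_B$, and since the lower box dimension is exactly what controls $\uac$, this is the half of the statement you cannot afford to lose.

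The gap is local and easily repaired in this setting: the pieces are not arbitrary, they are $[X_i]=\sigma^i([X_0])$, and $\sigma$ acts as an isometry on $([\Sigma],D)$, so all pieces have identical covering numbers at every scale. This is precisely the content of Theorem~\ref{thm: properties box dimension}(ii), which gives $\underline\Dim_B\big(\bigcup_{i=0}^{h(\sub)-1}\sigma^i([X_0])\big)=\underline\Dim_B([X_0])$ (and likewise for $\overline\Dim_B$), and is how the paper argues. With that substitution — invoking the isometric structure of the partition rather than a general union formula — your proof matches the paper's. (Your remaining worry, that the pure base inherits infiniteness and discrete spectrum, is genuine but routine: $X_0$ is infinite because $X_\sub$ is a finite union of its shifted copies, and discrete spectrum passes to $(X_0,\sigma^{h(\sub)})$; the paper leaves this implicit as well.)
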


We will need the following lemma to prove Theorem \ref{thm: general lower and upper bounds for ac}.

\begin{lem}[{\cite[Lemma~17 \& 19]{Dekking1978}}]\label{lem: pure base}
	Assume that $\sub:A\to\nefw$ is not pure.
	Then there exists a $\sigma^{h(\sub)}$-minimal subset $X_0\subset X_{\sub}$
	and a pure primitive constant length
	substitution $\eta:B\to B^+$ such that
	$(X_{\eta},\sigma)$ is conjugate to $(X_{0},\sigma^{h(\sub)})$.
\end{lem}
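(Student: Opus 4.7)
The plan is a two-step reduction: first to the case of a pure substitution, then to the case of a pure, one-to-one substitution. In that final setting, the assumption of discrete spectrum forces a coincidence via Theorem~\ref{thm: pure sub discrete iff coincidence}, and Theorem~\ref{thm: general estimates ac for one-to-one subs with coincidence} then delivers exactly the asserted bounds $1\leq \uac \leq \oac < \infty$. The reductions must carry the values of $\uac$ and $\oac$ along unchanged, which is the main point to verify.

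If $\sub$ is pure, the reduction is cheap: should $\sub$ fail to be one-to-one, Lemma~\ref{lem: one-to-one sub} produces a pure, one-to-one primitive constant-length $\sub'$ whose subshift is conjugate to $(X_\sub,\sigma)$, and Corollary~\ref{cor: invariance amorphic complexity} identifies the amorphic complexities. In this case Theorem~\ref{thm: general estimates ac for one-to-one subs with coincidence} applies directly, closing the argument.

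If $\sub$ is not pure, I invoke Lemma~\ref{lem: pure base} to extract a $\sigma^{h(\sub)}$-minimal set $X_0\subset X_\sub$ and a pure primitive constant-length $\eta$ with $(X_\eta,\sigma)$ conjugate to $(X_0,\sigma^{h(\sub)})$. The central task is then to establish $\uac(\sigma|_{X_\sub})=\uac(\sigma|_{X_\eta})$ (and analogously for $\oac$), which I plan to do geometrically. Using Corollary~\ref{cor:discrete spectrum iff finite sep numbers} to invoke Theorem~\ref{thm: amorphic complexity and box dimension}, it suffices to show equality of the corresponding lower/upper box dimensions in $([\Sigma],D)$. The cyclic $\sigma^{h(\sub)}$-minimal partition of $X_\sub$ together with Lemma~\ref{lem: shift-minimal partition} yields $[X_\sub]=\bigsqcup_{i=0}^{h(\sub)-1}\sigma^i([X_0])$; since $\sigma$ acts as an isometry on $[\Sigma]$, Theorem~\ref{thm: properties box dimension}(ii) collapses this to $\underline{\Dim}_B([X_\sub])=\underline{\Dim}_B([X_0])$ (and similarly for $\overline{\Dim}_B$). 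Applying Lemma~\ref{lem: conjugacy between power shifts is bi-Lipschitz} to the conjugacy between $(X_0,\sigma^{h(\sub)})$ and $(X_\eta,\sigma)$ yields a bi-Lipschitz homeomorphism $[X_0]\to[X_\eta]$, equating the box dimensions across the quotient spaces. Thus the amorphic complexities coincide.

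Finally, $(X_\eta,\sigma)$ is infinite (the cyclic pieces $\sigma^i(X_0)$ are shift-conjugate, so each of them is infinite once $X_\sub$ is) and inherits finite separation numbers from $(X_\sub,\sigma)$ via the bi-Lipschitz identification $[X_\eta]\cong[X_0]\subset[X_\sub]$ combined with Proposition~\ref{prop: finite sep numbers iff projection totally bounded}. Corollary~\ref{cor:discrete spectrum iff finite sep numbers} then upgrades this to discrete spectrum for $(X_\eta,\sigma)$, so the already-handled pure case applies to $\eta$ and completes the argument. The main obstacle is the case of non-pure $\sub$: threading amorphic-complexity invariance through the cyclic-decomposition and quotient-and-conjugate procedure is made possible only by the geometric Besicovitch-space framework developed in Section~\ref{sec:Besicovitch spaces and box dimension}.
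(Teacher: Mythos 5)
There is a fundamental mismatch here: you have not proved the statement in question at all. The statement to be established is Lemma~\ref{lem: pure base} itself --- the existence, for a non-pure $\sub$, of a $\sigma^{h(\sub)}$-minimal subset $X_0\subset X_\sub$ and a \emph{pure} primitive constant length substitution $\eta:B\to B^+$ with $(X_\eta,\sigma)$ conjugate to $(X_0,\sigma^{h(\sub)})$. This is a structural result about constant length substitutions (the ``pure base'' construction), which the paper does not prove but imports directly from Dekking's work (Lemmas~17 and~19 of \cite{Dekking1978}); a genuine proof would have to construct $\eta$ from the return words of $\sub$ to a cylinder determined by the height $h(\sub)$, or reproduce Dekking's argument. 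Your write-up instead proves Theorem~\ref{thm: general lower and upper bounds for ac} (the bounds $1\leq\uac\leq\oac<\infty$), and in the non-pure case it explicitly \emph{invokes} Lemma~\ref{lem: pure base} as a black box. So with respect to the target statement the argument is circular: you assume exactly what you are asked to show.

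To be clear, as a proof of Theorem~\ref{thm: general lower and upper bounds for ac} your argument is essentially the one the paper gives (cyclic $\sigma^{h(\sub)}$-minimal partition, Lemma~\ref{lem: shift-minimal partition}, isometry of $\sigma$ on $[\Sigma]$, Theorem~\ref{thm: properties box dimension}(ii), then Lemma~\ref{lem: conjugacy between power shifts is bi-Lipschitz} to transfer box dimension to $[X_\eta]$), and the additional care you take in checking that $(X_\eta,\sigma)$ is infinite and has discrete spectrum is a reasonable supplement. But none of that addresses the existence of $X_0$ and $\eta$, which is the entire content of the lemma under review.
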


The substitution $\eta$ from Lemma \ref{lem: pure base} is also called the
\emph{pure base} of $\sub$.

\begin{proof}[Proof of Theorem \ref{thm: general lower and upper bounds for ac}]
	First, suppose $\sub$ is pure.
	W.l.o.g.\ we may assume that $\sub$ is one-to-one because
	of Lemma \ref{lem: one-to-one sub} and \eqref{eq: equiality of ac for one-to-one sub}.
	Further, due to Theorem \ref{thm: pure sub discrete iff coincidence}, $\sub$
	has a coincidence of order $k\in\N$.
	By Theorem \ref{thm: general estimates ac for one-to-one subs with coincidence},
	we get $\uac(\sigma|_{X_\sub})\geq1$ and $\oac(\sigma|_{X_\sub})<\infty$.

	Now, assume that $\sub$ is not pure.
	Consider the $\sigma^{h(\sub)}$-minimal subset $X_0\subset X_{\sub}$ provided
	by Lemma \ref{lem: pure base}.
	By definition of $h(\sub)$, we get that $X_0,\sigma(X_0),\ldots,\sigma^{h(\sub)-1}(X_0)$
	is a cyclic $\sigma^{h(\sub)}$-minimal partition of $X_\sub$.
	Therefore, Lemma \ref{lem: shift-minimal partition} yields 
	\[
		\big[X_{\sub}\big]=\bigsqcup\limits_{i=0}^{h(\sub)-1}\sigma^i\big(\big[X_0\big]\big).
	\]
	Further, because $\sigma$ is an isometry on $([\Sigma],D)$, we can use
	Theorem~{\ref{thm: properties box dimension}(ii)} to obtain
	\begin{align*}
		\underline\Dim_B\big(\big[X_0\big]\big)
		=\underline\Dim_B\big(\big[X_\sub\big]\big)
		\leq\overline\Dim_B\big(\big[X_\sub\big]\big)
		=\overline\Dim_B\big(\big[X_0\big]\big).
	\end{align*}

	Finally, since $(X_{\eta},\sigma)$ is conjugate to $(X_{0},\sigma^{h(\sub)})$,
	Lemma \ref{lem: conjugacy between power shifts is bi-Lipschitz} yields
	a Lip\-schitz-con\-tinuous homeomorphism from $([X_\eta]_{B},D)$
	to $([X_0]_{A},D)$ with a Lip\-schitz-continuous inverse.
	By Theorem {\ref{thm: properties box dimension}(i)} and
	Theorem \ref{thm: amorphic complexity and box dimension}, we hence have
	\begin{equation*}
		\uac\big(\left.\sigma\right|_{X_{\eta}}\big)
		=\uac\big(\left.\sigma\right|_{X_{\sub}}\big)
		\leq\oac\big(\left.\sigma\right|_{X_{\sub}}\big)
		=\oac\big(\left.\sigma\right|_{X_{\eta}}\big).
	\end{equation*}
	Since $\eta$ is pure, the statement follows from the first part of the proof.
\end{proof}

Let us point out that the proof of Theorem \ref{thm: general lower and upper bounds for ac}
and its ingredients
in principle yield an algorithm to compute concrete bounds for the amorphic
complexity of $(X_{\sub},\sigma)$:
\begin{enumerate}
	\item Follow the proof of \cite[Theorem~14]{Dekking1978}, to determine the
		pure base $\eta:B\to B^+$ of $\sub$ (which coincides with $\sub$ if $\sub$
		is pure).
	\item  If $\eta$ is not one-to-one, follow the method described in 
		\cite[Section 2.2]{BlanchardDurandMaass2004} to obtain the one-to-one
		substitution $\eta':B'\to (B')^+$ (see Lemma~\ref{lem: one-to-one sub}).
		Otherwise, set $\eta'=\eta$.
		Recall that in any case $\eta'$ is pure.
	\item Apply Theorem \ref{thm: pure sub discrete iff coincidence} to find
		$k\in\N$ such that $\eta'$ has a coincidence of order $k$.
		Then, Theorem \ref{thm: general estimates ac for one-to-one subs with coincidence} 
		immediately yields concrete bounds for the lower and upper amorphic
		complexity of $(X_{\sub},\sigma)$ depending on $C({\eta'}^k)$ and
		$c({\eta'}^k)$, respectively.
\end{enumerate}

We would like to remark that as a corollary of our results, the amorphic complexity
readily distinguishes constant length substitutions which correspond to different dynamical behavior
according to the next assertion.

\begin{cor}
	Suppose $\sub:A\to\nefw$ is a primitive substitution of constant length.
	Then
	\begin{enumerate}[(i)]
		\item $\ac(\sigma|_{X_\sub})=0$ iff	$(X_{\sub},\sigma)$ is finite;
		\item $1\leq\uac(\sigma|_{X_\sub})\leq\oac(\sigma|_{X_\sub})<\infty$
			iff $(X_{\sub},\sigma)$ has discrete spectrum and is infinite;
		\item $\ac(\sigma|_{X_\sub})=\infty$ iff $(X_{\sub},\sigma)$ has
			partly continuous spectrum.
	\end{enumerate}
\end{cor}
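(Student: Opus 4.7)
The plan hinges on observing that the three conditions on the right-hand side partition all possible behaviors of $(X_\sub,\sigma)$. Indeed, a finite substitution subshift is simply a periodic orbit, which trivially has purely discrete spectrum; therefore ``partly continuous spectrum'' is incompatible with finiteness, and the three cases ``finite'', ``infinite with discrete spectrum'', and ``partly continuous spectrum'' are mutually exclusive and exhaustive. The corresponding amorphic complexity ranges $\{0\}$, $[1,\infty)$, and $\{\infty\}$ are likewise disjoint, so it suffices to establish the three forward implications — the reverse implications follow automatically.

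For (i), I would argue that if $X_\sub$ is finite, then for every $\delta>0$ and $\nu\in(0,1]$ the separation number $\Sep(\sigma|_{X_\sub},\delta,\nu)$ is bounded by $\#X_\sub$, independently of $\nu$. Consequently $\log\Sep(\sigma|_{X_\sub},\delta,\nu)/(-\log\nu)\to 0$ as $\nu\to 0$, so $\ac(\sigma|_{X_\sub})=0$. For (ii), I would cite Theorem \ref{thm: general lower and upper bounds for ac} directly, which already encapsulates this case. For (iii), I would combine Corollary \ref{cor:discrete spectrum iff finite sep numbers} — which for primitive substitution subshifts identifies discrete spectrum with finite separation numbers — with the observation from Section \ref{sec:asymp sep numbers and ac} that systems with infinite separation numbers have infinite amorphic complexity. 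Thus if $(X_\sub,\sigma)$ has partly continuous spectrum, it does not have discrete spectrum, so its separation numbers are infinite, and hence $\ac(\sigma|_{X_\sub})=\infty$.

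I do not expect any real obstacle: the corollary is essentially a bookkeeping exercise that neatly packages Theorem \ref{thm: general lower and upper bounds for ac}, Corollary \ref{cor:discrete spectrum iff finite sep numbers}, and the elementary observations about separation numbers in the trivial and the positive-entropy-like regimes. The only point deserving a short explicit remark is the dichotomy ``discrete spectrum versus partly continuous spectrum'', which holds by the standard decomposition of the Koopman operator on $L^2(X_\sub,\mu)$ (with $\mu$ the unique invariant measure) into its discrete and continuous parts.
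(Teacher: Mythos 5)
Your proposal is correct and follows essentially the same route as the paper: establish the three forward implications via the boundedness of separation numbers for finite $X_\sub$, Theorem \ref{thm: general lower and upper bounds for ac}, and Corollary \ref{cor:discrete spectrum iff finite sep numbers}, then conclude the equivalences from the mutual exclusivity and exhaustiveness of the three cases. You are merely a bit more explicit than the paper about the trichotomy bookkeeping, which the paper compresses into ``The statement follows.''
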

\begin{proof}
	Clearly, the amorphic complexity of  finite subshifts is always zero.
	Further, recall that the amorphic complexity of $(X_{\sub},\sigma)$ is
	infinite if $(X_{\sub},\sigma)$ has partly continuous spectrum, according to
	Corollary \ref{cor:discrete spectrum iff finite sep numbers}.
	By Theorem \ref{thm: general lower and upper bounds for ac}, we have that if
	$(X_{\sub},\sigma)$ has discrete spectrum and is infinite, then
	$1\leq\uac(\sigma|_{X_\sub})\leq\oac(\sigma|_{X_\sub})<\infty$.
	The statement follows.
\end{proof}

\bibliography{lib}
\bibliographystyle{alpha}
\end{document}